\DeclareMathOperator{\pv}{p.v.}
\DeclareMathOperator{\Ai}{Ai}
\DeclareMathOperator{\local}{local}
\DeclareMathOperator{\glob}{global}
\DeclareMathOperator{\curved}{curved}
\DeclareMathOperator{\vertical}{vertical}
\DeclareMathOperator{\lef}{left}
\DeclareMathOperator{\rig}{right}
\DeclareMathOperator{\out}{outer}
\DeclareMathOperator{\inner}{inner}
\newcommand{\compC}{\mathbb{C}}
\newcommand{\intZ}{\mathbb{Z}}
\newcommand{\realR}{\mathbb{R}}
\renewcommand{\Re}{\mathrm{Re}\,}
\renewcommand{\Im}{\mathrm{Im}\,}
\newcommand{\ds}{\displaystyle}
\newcommand{\ud}{\,\mathrm{d}}
\newcommand{\ie}{i.e.}
\newtheorem{thm}{Theorem}[section]
\newtheorem{lem}[thm]{Lemma}
\theoremstyle{remark}
\newtheorem{rmk}{Remark}
\numberwithin{equation}{section}
\newcommand{\ai}{\textrm{Ai}}
\newcommand{\bigO}{\mathcal{O}}
\newcommand{\eq}{\begin{equation}}
\newcommand{\nq}{\end{equation}}
\newcommand{\eqa}{\begin{eqnarray}}
\newcommand{\nqa}{\end{eqnarray}}
\begin{document}

\title{Bulk and soft-edge universality for singular values of products of Ginibre random matrices}
\author{Dang-Zheng Liu\footnotemark[1] ~ Dong Wang\footnotemark[2] ~ and Lun Zhang\footnotemark[3]}
\maketitle
\renewcommand{\thefootnote}{\fnsymbol{footnote}}
\footnotetext[1]{Key Laboratory of Wu Wen-Tsun Mathematics, Chinese Academy of Sciences, School of Mathematical Sciences, University of Science and Technology of China, Hefei 230026, P. R. China. E-mail: dzliu\symbol{'100}ustc.edu.cn}
\footnotetext[2]{Department of Mathematics, National University of Singapore, 119076, Singapore. E-mail: matwd@nus.edu.sg}
\footnotetext[3] {School of Mathematical Sciences and Shanghai Key Laboratory for Contemporary Applied Mathematics, Fudan University, Shanghai 200433, P. R. China. E-mail:
lunzhang\symbol{'100}fudan.edu.cn}

\begin{abstract}
It has been shown by Akemann, Ipsen and Kieburg that the squared singular values of products of $M$ rectangular random matrices with independent complex Gaussian entries are distributed according to a determinantal point process with a correlation kernel that admits a representation in terms of  Meijer G-functions. We prove the universality of the local statistics of the squared singular values, namely, the bulk universality given by the sine kernel and the edge universality given by the Airy kernel. The proof is based on the asymptotic analysis for the double contour integral representation of the correlation kernel. Our strategy can be generalized to deal with other models of products of random matrices introduced recently and to establish similar universal results. Two more examples are investigated, one is the product of $M$ Ginibre matrices and the inverse of $K$ Ginibre matrices studied by Forrester, and the other one is the product of $M-1$ Ginibre matrices with one truncated unitary matrix considered by Kuijlaars and Stivigny.
\end{abstract}


\section{Introduction and statement of the main results} \label{sec:Intro}

\subsection{Products of Ginibre matrices}

Significant progresses have been achieved recently in the study of products of random matrices, which have important applications in Schr\"{o}dinger operator theory \cite{Bougerol-Lacroix85}, in statistical physics relating to disordered and chaotic dynamical systems \cite{Crisanti-Paladin-Vulpiani93} and in wireless communication like MIMO (multiple-input and multiple-output) networks \cite{Tulino-Verdu04}. Although the pioneering work of Furstenberg and Kesten \cite{Furstenberg-Kesten60} focused on the statistical behavior of individual entries in the product as the number of factors tends to infinity, the recent interest of study lies in the distribution of eigenvalues and singular values of the product of a fixed number of matrices, where the sizes of the matrices tend to infinity. Various methods have been applied to perform the spectral analysis in different regimes. Particularly, the tools from free probability allow one to find the limiting mean eigenvalue distributions as in \cite{Alexeev-Gotze-Tikhomirov10, Banica-Belinschi-Capitaine-Collins11, Biane98, Burda-Janik-Waclaw10, Burda-Jarosz-Livan-Nowak-Swiech10, Furstenberg-Kesten60, Gotze-Kosters-Tikhomirov14,Gotze-Tikhomirov10, Nica-Speicher06, ORourke-Soshnikov11, Penson-Zyczkowski11}. It turns out that, as in the theory of matrix model for a single random matrix, the various limits exhibit a rich and interesting mathematical structure. Most of the results in literature on the model of products of matrices are about the global spectral properties, but local universality is also suggested (cf. \cite{Akemann-Kieburg-Wei13, Forrester14, Forrester-Liu14}). Our work is motivated by the previous results and proves the local universality of the squared singular values.

In this paper, we consider $M\geq 1$ independent complex random matrices $X_j$, $j=1,\ldots,M$, each has size $N_{j} \times N_{j-1}$ with independent and identically distributed standard complex Gaussian entries. These matrices are also known as complex Ginibre random matrices. We then form the product
\begin{equation} \label{Ym}
Y_M = X_M X_{M-1} \cdots X_1.
\end{equation}
For convenience, we assume that $N_1, \dotsc, N_M$ are associated to a large integer parameter $n$, which we interchangeably denote by $N_0$, such that
\begin{equation}
  \min \{N_0, \ldots, N_M \} = N_0 = n,
\end{equation}
and set
\begin{equation} \label{nuj}
  \nu_j = N_j - N_0, \qquad j=0, \ldots, M.
\end{equation}
Clearly, $\nu_0=0$ and $\nu_j\geq 0$ for $j=1,\ldots,M$.

When $M=1$, $Y_1 = X_1$ defines the complex Wishart random matrix and plays a fundamental role in random matrix theory; cf. \cite{Forrester10}. It is well known that the eigenvalues and squared singular values of $Y_1$ form determinantal point processes \cite{Johansson06,Soshnikov00}, and their distributions are expressed in terms of the correlation kernels.  Recent studies find the determinantal structures for the model with general $M$; see \cite{Akemann-Burda12} for the eigenvalues and \cite{Akemann-Ipsen-Kieburg13,Akemann-Kieburg-Wei13} for the squared singular values. Moreover, further investigations reveal that similar determinantal structures also appear in many other models of products of random matrices, such as the products involving inverses of complex Ginibre matrices \cite{Adhikari-Reddy-Reddy-Saha13,Forrester14,Krishnapur06} and products involving truncated unitary matrices \cite{Akemann-Burda-Kieburg-Nagao14,Ipsen-Kieburg14,Kuijlaars-Stivigny14}; see also Section \ref{sec:extensions} below, and the recent review paper \cite{Akemann-Ipsen15} and references therein.

We will focus on the squared singular values of $Y_M$. According to \cite{Akemann-Ipsen-Kieburg13}, the joint probability density function of the squared singular values is given by (see \cite[formula (18)]{Akemann-Ipsen-Kieburg13})
\begin{equation} \label{jpdf}
    P(x_1, \ldots, x_n) =  \frac{1}{Z_n}  \prod_{j < k} (x_k-x_j)\,
        \det \left[ w_{k-1}(x_j) \right]_{j,k=1, \ldots, n},
    \end{equation}
where $x_j>0$, the function $w_k$ is a Meijer G-function (see e.g.\ \cite{Beals-Szmigielski13,Luke69})
\begin{equation} \label{wk}
    w_k(x) = \mathop{{G^{{M,0}}_{{0,M}}}\/}\nolimits\!\left({- \atop \nu_M, \nu_{M-1},  \ldots, \nu_2, \nu_1 +
    k} \Big{|} x\right),
    \end{equation}
and the normalization constant (see \cite[formula (21)]{Akemann-Ipsen-Kieburg13}) is
\[ Z_n = n!\prod_{i=1}^{n}\prod_{j=0}^M \Gamma(i+\nu_j).  \]
Note that the Meijer G-function $w_k(x)$ can be written as a Mellin-Barnes integral
\begin{equation} \label{wkasMB}
    w_k(x) =  \frac{1}{2\pi i} \int_{c-i\infty}^{c+i\infty} \Gamma(s+\nu_1 + k) \prod_{j=2}^{M} \Gamma(s+\nu_j)  x^{-s} \ud s,
    \qquad k=0, 1, \ldots,
\end{equation}
with $c > 0$.

\subsection{The correlation kernel and double integral representation}
The determinantal point process \eqref{jpdf} is a biorthogonal ensemble \cite{Borodin99} with correlation kernel
\begin{equation} \label{Kn}
    K_n(x,y) = \sum_{j=0}^{n-1} \sum_{k=0}^{n-1} x^j (M_n^{-1})_{k,j}
    w_k(y),
    \end{equation}
where $M_n$ is the $n \times n$ matrix of moments
\begin{equation} \label{Mmoment}
    M_n = \begin{pmatrix} \ds \int_0^{\infty} x^j w_k(x) \ud x \end{pmatrix}_{j,k=0, \ldots, n-1}.
    \end{equation}
Alternatively, one can write the correlation kernel as
\begin{equation} \label{def:Kn}
    K_n(x,y) = \sum_{k=0}^{n-1} P_k(x) Q_k(y),
    \end{equation}
where for each $k = 0, 1, \ldots,n-1$, $P_k$ is a monic polynomial of degree $k$ and $Q_k$ is a linear combination of $w_0, \ldots, w_k$, uniquely defined by the orthogonality
\begin{equation} \label{PkQkbio}
    \int_0^{\infty} P_j(x) Q_k(x) \ud x = \delta_{j,k}.
    \end{equation}
Thus the functions $P_k$ and $Q_k$ are the so-called biorthogonal functions. The polynomials $P_k$ are also characterized as multiple orthogonal polynomials \cite{Ismail09} with respect to the first $M$ weight functions $w_j$, $j=0,\ldots,M-1$, as shown in  \cite{Kuijlaars-Zhang14}.

It turns out that $P_k$ and $Q_k$ are Meijer G-functions \cite{Akemann-Kieburg-Wei13}, and then they have contour integral representations. Then it is shown in \cite[Proposition 5.1]{Kuijlaars-Zhang14} that the correlation kernel admits the following double contour integral representation
\begin{equation} \label{Knintegral}
    K_n(x,y) =  \frac{1}{(2\pi i)^2} \int_{-1/2-i\infty}^{-1/2+i\infty} \ud s \oint_{\Sigma}  \ud t
        \prod_{j=0}^M   \frac{\Gamma(s+\nu_j+1)}{\Gamma(t+\nu_j+ 1)}
            \frac{\Gamma(t-n+1)}{\Gamma(s-n+1)}
        \frac{x^t y^{-s-1}}{s-t},
\end{equation}
where $\Sigma$ is a closed contour going around $0, 1, \ldots, n-1$ in the positive direction and  $\Re t > -1/2$ for $t \in \Sigma$. The choices of these two contours are not unique. We can, and indeed will, make some deformations in our later analysis.

\subsection{Limiting mean density}

The first step of the study of the correlation kernel is to compute the $1$-point correlation function, which is also known as the mean density of the model. This global result is also the basis of our proof of the local universality \cite{Kuijlaars11}. As mentioned at the very beginning, the limiting mean density/spectral  distribution of the squared singular values for $Y_M$ is well understood using tools from free probability; see also recent work \cite{Neuschel14,Zhang13} for the study from the polynomials $P_k$. It turns out that, after proper scaling, the limiting mean density is recognized as the Fuss-Catalan distribution \cite{Alexeev-Gotze-Tikhomirov10, Banica-Belinschi-Capitaine-Collins11, Nica-Speicher06}, i.e., its $k$-th moment is given by the Fuss-Catalan number
\begin{equation}
\frac{1}{Mk+1}\binom{Mk+k}{k},\quad k=0,1,2,\ldots.
\end{equation}
The (rescaled) limiting mean density is supported on an interval $[0, (M + 1)^{M + 1}/M^M]$, with explicit form given in terms of Meijer G-functions \cite{Penson-Zyczkowski11} or multivariate integrals \cite{Liu-Song-Wang11}. Probably the simplest form of the density function for general $M$ is expressed by the following parametrization of the argument \cite{Biane98,Haagerup-Moller13,Neuschel14}:
\begin{equation}\label{eq:para x}
x=\frac{(\sin ((M+1)\varphi))^{M+1}}{\sin\varphi (\sin( M\varphi))^M}, \quad 0<\varphi<\frac{\pi}{M+1}.
\end{equation}
It is readily seen that this parametrization is a strictly decreasing function of $\varphi$, thus gives a  one-to-one mapping from $(0, \pi/(M+1))$ to $(0, (M + 1)^{M + 1}/M^M)$. The density function in terms of $\varphi$ is then given by
\begin{equation}\label{eq:density}
  \rho(\varphi) = \frac{1}{\pi x} \frac{\sin((M + 1)\varphi)}{\sin(M\varphi)} \sin\varphi = \frac{1}{\pi}\frac{(\sin\varphi)^2(\sin (M\varphi))^{M-1}}{ (\sin ((M+1)\varphi))^M}.
\end{equation}

From \eqref{eq:para x} and \eqref{eq:density}, one can check (cf. \cite{Forrester-Liu14}) that the density blows up with a rate $x^{-M/(M+1)}$ near the origin (hard edge), while vanishes as a square root near $(M + 1)^{M + 1}/M^M$ (soft edge). These facts particularly suggest, as pointed out in \cite{Akemann-Kieburg-Wei13}, the classical bulk and soft edge universality (via the sine kernel and Airy kernel, respectively) should hold in the bulk and the right edge respectively as in the $M = 1$ case, that is, the complex Wishart ensemble, but new limiting distributions are required to describe the local behavior at the hard edge if $M > 1$. The new limiting distributions, characterized by their limiting correlation kernels, were computed in \cite{Kuijlaars-Zhang14} by taking limit from the integral representation \eqref{Knintegral}. Here we note that the new family of kernels is a generalization of the classical Bessel kernel \cite{Tracy-Widom94b} which is the $M = 1$ case of the family, and they are universal correlation kernels since they also appear in many other random models, including Cauchy-chain matrix models \cite{Bertola-Bothner14,Bertola-Gekhtman-Szmigielski14}, products of Ginibre matrices with inverse ones \cite{Forrester14}, biorthogonal ensembles of Borodin \cite{Borodin99} (as shown in \cite{Kuijlaars-Stivigny14}), etc. However, the conceptually simpler universality results in the bulk and at the right edge turn out to be technically more complicated and have been left open in \cite{Kuijlaars-Zhang14}.

It is the aim of this paper to confirm the bulk and soft edge universality in the products of Ginibre matrices $Y_M$. Our main results are stated in the next section.

\subsection{Statement of the main results}
We start with the definition of the sine kernel (see \cite[Theorem 3.1.1]{Anderson-Guionnet-Zeitouni10}; here we take a different normalization):
\begin{equation}
K_{\sin}(x,y):=\frac{\sin\pi(x-y)}{\pi(x-y)}.
\end{equation}
Recall the correlation kernel $K_n(x,y)$ given in \eqref{Knintegral},  our first result is stated as follows:
\begin{thm}[Bulk universality]\label{thm:bulk}
For $x_0\in(0, (M + 1)^{M + 1}/M^M)$, which is parametrized through \eqref{eq:para x} by  $\varphi = \varphi(x_0) \in (0, \pi/(M+1))$, we have, with $\nu_1,\ldots,\nu_M$ being fixed,
\begin{equation}\label{eq:bulk univ}
 \lim_{n \to \infty} \frac{e^{-\pi\xi\cot\varphi}}{e^{-\pi \eta \cot\varphi}} \frac{n^{M-1}}{\rho(\varphi)} K_n\left(n^M\left(x_0+\frac{\xi}{n\rho(\varphi)}\right), n^M\left(x_0+\frac{\eta}{n\rho(\varphi)}\right)\right) =K_{\sin}(\xi,\eta)
\end{equation}
uniformly for $\xi$ and $\eta$ in any compact subset of $\mathbb{R}$, where $\rho(\varphi)$ is defined in \eqref{eq:density}.
\end{thm}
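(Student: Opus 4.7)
My strategy is a classical double saddle-point analysis of the double contour integral representation \eqref{Knintegral}. The first step is to identify the phase function. Inserting Stirling's formula into each ratio of Gamma functions, rescaling $s=n\sigma$ and $t=n\tau$, and setting $x=y=n^M x_0$ at leading order (the $\nu_j$ are fixed and so contribute only lower-order corrections), the integrand takes the schematic form
\[
(\text{slowly varying prefactor})\times \frac{1}{s-t}\,\exp\bigl\{n[\phi(\tau)-\phi(\sigma)]\bigr\},
\]
where, up to additive affine terms,
\[
\phi(z)=(M+1)z\log z-(z-1)\log(z-1)-z\log x_0.
\]
The saddle equation $\phi'(z)=0$ reduces to $z^{M+1}=x_0(z-1)$. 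Using \eqref{eq:para x} one checks directly that in the bulk the two relevant saddles form a complex conjugate pair
\[
z_\pm=\frac{\sin((M+1)\varphi)}{\sin(M\varphi)}\,e^{\pm i\varphi},
\]
with $\arg z_+=\varphi$ and $\Im z_+=\pi x_0 \rho(\varphi)$, so that $z_+/(x_0\rho)=\pi\cot\varphi+i\pi$.

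Next I would deform the contours to pass through both saddles: the vertical line $\Re s=-1/2$ is pushed to a contour $\Sigma_s$ through $nz_\pm$ along the steepest descent directions of $-\Re\phi$, and the loop $\Sigma$ is deformed to a closed curve $\Sigma_t$ through $nz_\pm$ along the steepest descent directions of $\Re\phi$. The two contours cross transversally at each saddle. The required geometric input is that such contours can be realised without crossing the poles of $\Gamma(s+\nu_j+1)$ on the negative real axis, with strict inequalities $\Re\phi(t/n)<\Re\phi(z_\pm)<\Re\phi(s/n)$ on $\Sigma_t$, $\Sigma_s$ away from the saddles; this follows from a study of the level curves of $\Re\phi$. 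Given this, the contributions outside small neighbourhoods of $nz_\pm$ are exponentially negligible.

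Inside each neighbourhood I would expand $\phi$ to quadratic order and extract the leading $\xi$, $\eta$-dependence from $x^t y^{-s-1}$, using $x=n^M(x_0+\xi/(n\rho))$ and the analogous formula for $y$. The $\xi$-term from $x^t$ evaluated at $t\approx nz_+$ yields the constant factor $\exp(z_+\xi/(x_0\rho))$, and the $\eta$-term from $y^{-s-1}$ at $s\approx nz_+$ yields $\exp(-z_+\eta/(x_0\rho))$; their product is
\[
\exp\!\Bigl(\frac{z_+(\xi-\eta)}{x_0\rho}\Bigr)=\exp\bigl(\pi(\xi-\eta)\cot\varphi+i\pi(\xi-\eta)\bigr),
\]
with the complex conjugate arising at the $(nz_-,nz_-)$ crossing; the mixed $(nz_+,nz_-)$ crossings are smaller by a factor of $O(1/n)$ because $s-t$ is of macroscopic size $n(z_+-z_-)$ there. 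Combining the two dominant local contributions with the Jacobians of the rescaling and the Gaussian normalisations, and invoking the elementary identity
\[
\frac{e^{i\pi(\xi-\eta)}-e^{-i\pi(\xi-\eta)}}{2\pi i(\xi-\eta)}=K_{\sin}(\xi,\eta),
\]
one obtains the sine kernel, up to the overall gauge factor $e^{\pi(\xi-\eta)\cot\varphi}$ that is precisely absorbed by the explicit prefactor $e^{-\pi\xi\cot\varphi}/e^{-\pi\eta\cot\varphi}$ in the statement.

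The main obstacle I foresee is the global contour topology. One must construct explicit steepest descent contours through the complex conjugate pair of saddles, verify the sign of $\Re\phi$ uniformly on them (including along the unbounded tail of $\Sigma_s$, where a uniform Stirling estimate is needed), and avoid the poles of $\Gamma(s+\nu_j+1)$. Once these topological and tail-estimate issues are settled, the local Gaussian analysis and the final identification with the sine kernel are essentially routine.
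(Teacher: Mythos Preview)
Your identification of the phase function and the saddle pair $z_\pm$ is correct, and matches the paper exactly. The gap is in the mechanism by which the sine kernel actually appears.

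You propose to run a standard quadratic saddle-point analysis at the two diagonal crossings $(nz_+,nz_+)$ and $(nz_-,nz_-)$ and then ``combine the two dominant local contributions with the Jacobians of the rescaling and the Gaussian normalisations'' to produce $K_{\sin}$. This does not work. Near a crossing, with $s=nz_++\sqrt{n}\,u$ and $t=nz_++\sqrt{n}\,v$, the integrand behaves like $e^{c(u^2-v^2)/2}/(u-v)$ times the harmless $(\xi,\eta)$-dependent constants you wrote down; the resulting local double integral is a principal value of size $O(1)$ in $(u,v)$, hence $O(\sqrt{n})$ after the $ds\,dt$ Jacobian, and so contributes $O(n^{-M+1/2})$ to $K_n$. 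But the theorem requires a contribution of order $n^{-M+1}$. Nothing in a Gaussian computation produces the $1/(\xi-\eta)$ denominator of the sine kernel; your prefactors $e^{\pm i\pi(\xi-\eta)}$ are correct, but the identity you quote needs a $1/(2\pi i(\xi-\eta))$ in front, and you have not explained where it comes from.

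In the paper the sine kernel arises from a \emph{residue}, not from a Gaussian. The contour $\Sigma$ is split by the vertical line $\mathcal{C}$ into a left and a right piece joined by two short vertical segments $\Sigma_{\mathrm{vertical}}$ straddling $\mathcal{C}$ near $nw_\pm$. Writing $K_n=I_1+I_2$ according to the curved and vertical parts of $\Sigma$, the limit $\epsilon\to 0$ of $I_2$ is computed by Cauchy's theorem on the $s$-integral: the pole $1/(s-t)$ is caught, the Gamma factors cancel, and one is left with
\[
\lim_{\epsilon\to 0} I_2 \;=\; \frac{y^{-1}}{2\pi i}\int_{nw_-}^{nw_+}\Bigl(\frac{x}{y}\Bigr)^{s}\,ds
\;=\;\frac{1}{2\pi i\, y\log(x/y)}\left[\Bigl(\frac{x}{y}\Bigr)^{nw_+}-\Bigl(\frac{x}{y}\Bigr)^{nw_-}\right],
\]
which after inserting $x,y$ gives precisely $e^{\pi(\xi-\eta)\cot\varphi}\,\rho(\varphi)\,n^{-(M-1)}K_{\sin}(\xi,\eta)$; the $1/(\xi-\eta)$ comes from $1/\log(x/y)$. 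The remaining piece $I_1$ is exactly your crossing-saddle principal-value integral, and the paper shows it is $O(n^{-M+1/2})$, i.e.\ subdominant. So the step you flagged as ``essentially routine'' is where the real content lies, and your description of it would give the error term, not the main term.
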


Next, recall the Airy kernel defined by \cite[Section 3.1]{Anderson-Guionnet-Zeitouni10}
\begin{equation}\label{def:airy kernel}
  K_{\ai}(x,y):=\frac{\ai(x)\ai'(y)-\ai'(x)\ai(y)}{x-y} =\frac{1}{(2\pi i)^2}\int_{\gamma_R}\ud \mu\int_{\gamma_L}\ud \lambda \frac{e^{\frac{\mu^3}{3}-x\mu}}{e^{\frac{\lambda^3}{3}-y\lambda}}\frac{1}{\mu-\lambda},
\end{equation}
where $\gamma_R$ and $\gamma_L$ are symmetric with respect to the imaginary axis,
and $\gamma_R$ is a contour in the right-half plane going from $e^{-\pi i/3}\cdot \infty$ to $e^{\pi i/3}\cdot \infty$;
see Figure \ref{fig:Airy_curve} for an illustration.
\begin{figure}[t]
  \centering
  \begin{overpic}[scale=.6]{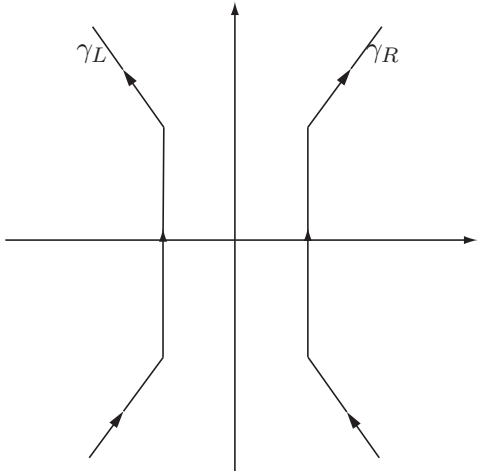}
    \put(15,88){$\gamma_L$} \put(76,88){$\gamma_R$}
  \end{overpic}
  \caption{The contours $\gamma_L$ and $\gamma_R$ in the definition of Airy kernel.}
  \label{fig:Airy_curve}
\end{figure}

\begin{thm}[Soft edge universality]\label{thm:edge}
  With $\nu_1,\ldots,\nu_M$ being fixed, we have
  \begin{equation}\label{eq:edge univer}
    \lim_{n \to \infty} \frac{e^{-2^{-\frac{1}{3}}(M + 1)^{\frac{2}{3}} \xi n^{\frac{1}{3}}}}{e^{-2^{-\frac{1}{3}}(M + 1)^{\frac{2}{3}} \eta n^{\frac{1}{3}}}} n^{M - \frac{2}{3}}c_2 K_n\left(n^M\left( x_* + \frac{ c_2 \xi}{n^{\frac{2}{3}}}\right), n^M\left( x_* + \frac{ c_2 \eta}{n^{\frac{2}{3}}}\right)\right) = K_{\Ai}(\xi, \eta)
  \end{equation}
  uniformly for $\xi$ and $\eta$ in any compact subset of $\mathbb{R}$, where
  \begin{equation}\label{def:c2}
    x_* = \frac{(M + 1)^{M + 1}}{M^M} \quad \text{and} \quad c_2=\frac{(M + 1)^{M + \frac{2}{3}}}{2^{\frac{1}{3}} M^{M - 1}}.
\end{equation}
\end{thm}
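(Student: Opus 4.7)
The strategy is a classical double-integral steepest descent analysis of the representation \eqref{Knintegral}, parallel to the hard-edge treatment in \cite{Kuijlaars-Zhang14} but with the saddle located at a finite positive point rather than at the origin. Writing the integrand as
\[
\frac{1}{y(s-t)}\exp\bigl\{F_n(t;x)-F_n(s;y)\bigr\},\qquad F_n(w;z):=w\log z-\sum_{j=0}^{M}\log\Gamma(w+\nu_j+1)+\log\Gamma(w-n+1),
\]
the substitution $w=nu$ combined with Stirling's formula yields, for $x,y$ close to $n^M x_*$ and $\nu_1,\dots,\nu_M$ fixed,
\[
F_n(nu;n^M x_*)=-n\log n+n\,\phi(u)+O(\log n),
\]
\[
\phi(u):=u\log x_*-(M+1)(u\log u-u)+(u-1)\log(u-1)-(u-1),
\]
initially for $\Re u>1$ and extended to $\Re u<1$ via the reflection formula for $\Gamma(w-n+1)$, which contributes a bounded oscillatory $\log\sin$ correction.

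Differentiation gives $\phi'(u)=\log x_*-(M+1)\log u+\log(u-1)$ and $\phi''(u)=-(M+1)/u+1/(u-1)$. Setting $\phi''(u_*)=0$ forces $u_*=(M+1)/M$, and then $\phi'(u_*)=0$ reduces to $u_*^{M+1}=x_*(u_*-1)$, which is exactly the defining identity of $x_*=(M+1)^{M+1}/M^M$. Hence $u_*$ is a \emph{double} saddle of $\phi$, with $\phi'''(u_*)=-M^3/(M+1)\neq 0$: the cubic degeneracy responsible for Airy scaling.

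Near the saddle one substitutes $t=nu_*+cn^{2/3}\mu$ and $s=nu_*+cn^{2/3}\lambda$ with $c=-(2(M+1))^{1/3}/M$, the real cube root of $-2(M+1)/M^3$, so that the cubic part of $n\phi$ becomes $\mu^3/3$ on the $t$-side and $-\lambda^3/3$ on the $s$-side. The perturbation $x=n^M(x_*+c_2\xi/n^{2/3})$ inserts $t\cdot c_2\xi/(x_*n^{2/3})$ into the exponent; using the elementary identities $u_*c_2/x_*=2^{-1/3}(M+1)^{2/3}$ and $c\,c_2/x_*=-1$, this splits as $2^{-1/3}(M+1)^{2/3}\xi n^{1/3}-\xi\mu+o(1)$, in which the $n^{1/3}$ piece is cancelled exactly by the conjugating prefactor in \eqref{eq:edge univer}, and $-\xi\mu$ produces the Airy linear term (similarly for $\eta$). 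Tracking the Jacobian $c^2n^{4/3}\,d\mu\,d\lambda$, the factor $(s-t)^{-1}=(cn^{2/3}(\lambda-\mu))^{-1}$ (whose sign flip $1/(\lambda-\mu)=-1/(\mu-\lambda)$ supplies the missing minus), the overall $1/y\sim 1/(n^M x_*)$, and the normalization $c_2n^{M-2/3}$ in the theorem, all surplus powers of $n$ cancel and all signs combine to reproduce exactly the Airy integrand \eqref{def:airy kernel}.

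The principal technical obstacle is the construction of admissible global contours and the corresponding tail estimates. One deforms the vertical line $\Re s=-1/2$ into a curve through $nu_*$ whose local tangent matches $\gamma_L$ under the scaling, and deforms $\Sigma$ into a closed curve through $nu_*$ whose local tangent matches $\gamma_R$ under the scaling, subject to: (i) the two contours meet only at $nu_*$, avoiding the pole at $s=t$; (ii) $\Sigma$ still encloses exactly $\{0,1,\dots,n-1\}$; (iii) the zeros of $1/\Gamma(s-n+1)$ at $0,1,\dots,n-1$, which the deformed $s$-contour sweeps past, are harmless (being zeros rather than poles); and (iv) $\Re\phi$ is strictly monotone along each contour away from the saddle, so that the tail contributions are $O(\exp(-\delta n^{\varepsilon}))$ for some $\delta,\varepsilon>0$. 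Verifying (iv) rigorously — particularly as the contours must cross the strip $0<\Re u<1$, where $\phi$ carries the oscillatory reflection correction — is expected to require an explicit steepest-descent parametrization together with monotonicity lemmas for $\Re\phi$, and is where the bulk of the technical work will lie.
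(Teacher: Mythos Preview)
Your proposal is correct and follows essentially the same steepest-descent route as the paper: the same double saddle $u_*=(M+1)/M$, the same cubic degeneracy $\phi'''(u_*)=-M^3/(M+1)$ (your $\phi$ is $-\hat{F}$ in the paper's notation, whence your negative scaling constant $c=-c_1$), the same cancellation of the $n^{1/3}$ conjugating factor, and the same acknowledged need for global contour estimates.

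Two points where the paper sharpens your sketch. First, in your item (iii) you note that the zeros of $1/\Gamma(s-n+1)$ at $0,1,\dots,n-1$ are harmless when the $s$-contour is pushed rightward, but that is not the obstruction: moving $\mathcal{C}$ from $\Re s=-1/2$ to the right of $nu_*>n$ forces it to cross the closed contour $\Sigma$, and the pole at $s=t$ contributes a residue. The paper (Section~\ref{subsec:notations}) handles this by computing the residue as $(x/y)^t$ and observing that its integral over the \emph{closed} contour $\Sigma$ vanishes; without this step the swap is unjustified. Second, rather than having $\mathcal{C}$ and $\Sigma$ meet at $nu_*$ (which would force a principal-value treatment of $1/(s-t)$), the paper separates the two local contours by $c_1 n^{2/3}$, so they match the disjoint pair $(\gamma_R,\gamma_L)$ in the Airy formula directly. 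For the global part of $\Sigma$ the paper does not use an abstract steepest-descent path but the explicit curve $\zeta(\phi)=\frac{\sin((M+1)\phi)}{\sin(M\phi)}e^{i\phi}$, along which the monotonicity of $\Re\hat{F}$ reduces to a one-variable computation (Lemma~\ref{prop:bulk_contours_properties}); this concrete ansatz is what makes the tail estimates you flag in (iv) tractable, including across the strip $0<\Re u<1$.
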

Theorems \ref{thm:bulk} and \ref{thm:edge} then imply that the universal scaling limits of the correlation kernel (in the bulk or at the soft edge) that are typical for unitary random matrix ensembles also occur in products of complex Ginibre random matrices.

\begin{rmk}
  If we strengthen the result in Theorem \ref{thm:edge} from uniform convergence into the trace norm convergence of the integral operators with respect to the correlation kernels, then as a direct consequence we have that the limiting distribution of the largest squared singular value, after rescaling, converges to the Tracy-Widom distribution \cite[Theorem 3.1.5]{Anderson-Guionnet-Zeitouni10}. Since the proof of trace norm convergence is only a technical elaboration that confirms a well-expected result, we do not give the detail.
\end{rmk}

\subsection{About the proof}\label{subsec:aboutproof}
Our proof of the main theorems is based on a steepest descent analysis of the double contour integral \eqref{Knintegral}, whose integrand contains products and ratios of gamma functions with large arguments. By Stirling's formula, the logarithms of the gamma functions are approximated by elementary functions for $n$ large, which play the role of phase function. In the bulk regime there are two complex conjugate saddle points, while in the edge regime these two saddle points coalesce into one. The main challenge of the proof is to find suitable contours of integration and sophisticated estimates of integrals. As we shall see later, the parametrization \eqref{eq:para x} will be essential in the analysis. Our strategy can be generalized to deal with some other product models introduced recently, where the correlation kernels have similar structures. We will also discuss about this aspect at the end of this paper.

Here we note that the steepest descent analysis of a double contour integral involving gamma functions was used in a different random matrix model in \cite{Adler-van_Moerbeke-Wang11}, where the limiting Pearcey kernel was derived. The preprint \cite{Forrester-Wang15} that considers a random matrix model similar to that in \cite{Adler-van_Moerbeke-Wang11} applies the method detailed in this paper to perform asymptotic analysis. See also \cite{Zhang15}.

The rest of this paper is organized as follows. Theorems \ref{thm:bulk}--\ref{thm:edge} are proved in Section \ref{sec:proofs}, upon two technical lemmas that are proved in Section \ref{sec:ContConst} on the properties of the specially deformed integral contours. Section \ref{sec:ContConst} also contains the precise construction of the deformed contours. We conclude this paper in Section \ref{sec:extensions} with a discussion on the generalizations of our method to establish similar universal results in other models of products of random matrices. We present two more examples, one is the product of $M$ Ginibre matrices and the inverse of $K$ Ginibre matrices studied by Forrester \cite{Forrester14}, and the other one is the product of $M-1$ Ginibre matrices with one truncated unitary matrix considered by Kuijlaars and Stivigny \cite{Kuijlaars-Stivigny14}.

\section{Proofs of the main theorems}\label{sec:proofs}

\subsection{Notations and contour deformations}\label{subsec:notations}

For notational simplicity, we set
\begin{equation} \label{eq:defn_F}
  F(z; a) := \log \left( \frac{\prod_{j=0}^M \Gamma(z+\nu_j+1)}{ \Gamma(z - n + 1)} a^{-z} \right), \qquad a\geq 0,
\end{equation}
where the logarithm takes the principal branch and we assume that the value of $\log z$ for $z \in (-\infty, 0)$ is continued from above. The asymptotics of $F$ is crucial in our analysis. To proceed, note the Stirling's formula for gamma function \cite[formula 5.11.1]{Boisvert-Clark-Lozier-Olver10} reads
\begin{equation}\label{eq:stirling}
\log \Gamma(z) = \left(z-\frac{1}{2}\right)\log z-z+\frac{1}{2}\log (2\pi)+\mathcal{O}\left(\frac{1}{z}\right)
\end{equation}
as $z\to\infty$ in the sector $\lvert \arg z \rvert \leq \pi-\epsilon $ for some $\epsilon > 0$. Hence it follows that if $\lvert z \rvert \to \infty$ and $\lvert z - n \rvert \to \infty$, while $\arg z$ and $\arg(z - n)$ are in $(-\pi + \epsilon, \pi - \epsilon)$, then uniformly
\begin{multline} \label{eq:F_in_F_tilde}
  F(z; a) = \tilde{F}(z; a) + \sum_{j=0}^M\left(\nu_j+\frac{1}{2}\right) \log z - \frac{1}{2} \log(z - n)\\
   + \frac{M}{2} \log(2\pi) +\bigO(\min(\lvert z \rvert, \lvert z - n \rvert)^{-1}),
\end{multline}
where
\begin{equation} \label{eq:tilde_F_intermidiate}
  \tilde{F}(z; a) = (M + 1)z(\log z - 1) - (z - n)(\log(z - n) - 1) - z\log a.
\end{equation}
Furthermore, we have
\begin{equation} \label{eq:defn_hat_F}
  \tilde{F}(nz; n^M a) = n \hat{F}(z; a) + n\log n,
\end{equation}
where
\begin{equation} \label{eq:hat_F}
  \hat{F}(z; a) = (M + 1)z(\log z - 1) - (z - 1)(\log(z - 1) - 1) - z\log a.
\end{equation}
The behaviour of $\Re \hat{F}(z; a)$ is crucial in the saddle point analysis in this paper, and we plot the level line $\Re \hat{F}(z; a) = \Re \hat{F}(w_{\pm}; a)$ in Figure \ref{fig:level_set}, where $w_{\pm}$ are the critical points of $\hat{F}(z; a)$, see \eqref{eq:defn_w_pm} and \eqref{eq:saddle bulk}.

To prove the results of universality, we also need to deform the contours in \eqref{Knintegral}. First we note that the integral contour for $s$ can be replaced by any infinite contour $\mathcal{C}$ that is taken to go from $-i\infty$ to $i\infty$, as long as $\Sigma$ is on the right side of $\mathcal{C}$. Thus (with shorthand notation $F$ defined in \eqref{eq:defn_F}), we express \eqref{Knintegral} as
\begin{equation} \label{eq:deformed_contour}
  \begin{split}
    K_n(x,y) = & \frac{y^{-1}}{(2\pi i)^2} \int_{\mathcal{C}} \ud s \oint_{\Sigma}  \ud t \frac{e^{F(s; y)}}{e^{F(t; x)}} \frac{1}{s - t}.
  \end{split}
\end{equation}

In the proof of the soft edge universality, we will further deform $\mathcal{C}$ in \eqref{eq:deformed_contour} such that $\Sigma$ is on its left, and it turns out that the resulting double contour integral remains the same. To see this, let $\mathcal{C}$ and $\mathcal{C}'$ be two infinite contours from $-i\infty$ to $i\infty$ such that $\Sigma$ lies between $\mathcal{C}$ and $\mathcal{C}'$, that is, $\Sigma$ is enclosed by $\mathcal{C} \cup \mathcal{C}'$. Applying the residue theorem to the integral on $\mathcal{C} \cup \mathcal{C}'$, it follows
\begin{equation}
  \int_\mathcal{C} \ud s \oint_{\Sigma}  \ud t \frac{e^{F(s; y)}}{e^{F(t; x)}} \frac{1}{s - t} - \int_{\mathcal{C}'} \ud s \oint_{\Sigma}  \ud t \frac{e^{F(s; y)}}{e^{F(t; x)}} \frac{1}{s - t} = 2\pi i \int_{\Sigma} \left( \frac{x}{y} \right)^t dt = 0.
\end{equation}
Hence, the double contour integral does not change if $\mathcal{C}$ is replaced by $\mathcal{C}'$. The deformation of $\mathcal{C}$ is shown in Figure \ref{fig:contour_deformation}.

Similarly, we can show that if $\Sigma$ is split into two disjoint closed counterclockwise contours $\Sigma = \Sigma_1 \cup \Sigma_2$, which jointly enclose poles $0, 1, \ldots, n-1$, and $\mathcal{C}$ is an infinite contour from $-i\infty$ to $i\infty$ such that $\Sigma_1$ is on the left side of $\mathcal{C}$ and $\Sigma_2$ is on the right side of $\mathcal{C}$, the formula \eqref{eq:deformed_contour} is still valid. We will use such kind of contours in the proof of the bulk universality. The deformation of $\Sigma$ is shown in Figure \ref{fig:contour_deformation}.

\begin{figure}[htb]
  \centering
  \includegraphics{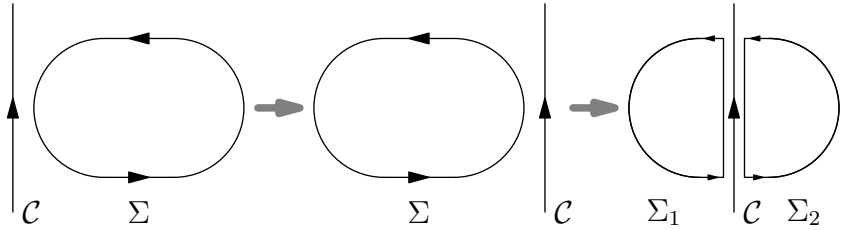}
  \caption{In the contour integral \eqref{eq:deformed_contour}, the position of $\Sigma$ and $\mathcal{C}$ can be switched, and $\Sigma$ can be split into $\Sigma_1$ and $\Sigma_2$ and $\mathcal{C}$ goes between them.}
  \label{fig:contour_deformation}
\end{figure}

To facilitate the asymptotic analysis, throughout the rest of this paper, we shall denote by $D_{r}(a)$ the disc centered at $a$ with radius $r$, and by $\mathbb{C}_{\pm}$ the upper/lower half complex plane, respectively, and if $C$ is a contour in $\compC$ and $r > 0$, then denote by $rC$ the contour $\{ z \in \compC \mid z/r \in C \}$ with the same orientation as $C$.

\subsection{Proof of Theorem \ref{thm:bulk}} \label{subsec:proof_of_bulk_univ}

Before going in the detail of the proof, we sketch the strategy. As mentioned in the end of Section \ref{subsec:notations}, we use the integral representation \eqref{eq:deformed_contour} of the kernel $K_n(x, y)$, with the contours deformed, such that $\Sigma$ is split into two parts, and the contour $\mathcal{C}$ goes between them. But in the asymptotic analysis, we group the ``curved'' part and the ``vertical'' part of $\Sigma$ into $\Sigma_{\curved}$ and $\Sigma_{\vertical}$ respectively; see Figure \ref{fig:Bulk} to get a visual idea of these two parts. Then we compute $K_n(x, y)$ as $I_1 + I_2$, where $I_1$ is the integral \eqref{eq:deformed_contour} over $\mathcal{C}$ and $\Sigma_{\curved}$, while $I_2$ is that over $\mathcal{C}$ and $\Sigma_{\vertical}$. $I_1$ is evaluated by the usual saddle point method, as detailed in Section \ref{subsubsec:I_1}, and it turns out to be the insignificant part; $I_2$ is evaluated by an application of Cauchy's theorem, see Section \ref{subsubsec:I_2}, and it turns out to be the main contribution.

\subsubsection{Exact deformation of the contours}

For any $x_0\in (0, (M + 1)^{M + 1}/M^M)$, we use the parametrization \eqref{eq:para x}, and let $\varphi$ be the unique real number in $(0, \pi/(M + 1))$ such that
\begin{equation*}
  x_0 = \frac{(\sin((M + 1)\varphi))^{M + 1}}{\sin \varphi (\sin(M \varphi))^M}.
\end{equation*}

To prove the bulk universality, we assume that the arguments $x$ and $y$ in \eqref{eq:deformed_contour} are in the form
\begin{equation} \label{eq:defn_xy_bulk}
x = n^M \left(x_0 + \frac{ \xi}{n \rho(\varphi)}\right), \qquad y =n^M \left(x_0 + \frac{\eta}{n \rho(\varphi)}\right),
\end{equation}
where $\xi$ and $\eta$ are in a compact subset of $\realR$ and $\rho(\varphi)$ is given in \eqref{eq:density}. For the asymptotic analysis, we denote
\begin{equation} \label{eq:defn_w_pm}
  w_{\pm} = \frac{\sin((M + 1)\varphi)}{\sin(M\varphi)} e^{\pm i\varphi}.
\end{equation}
They are two saddle points of the function $\hat{F}(z; x_0)$ defined in \eqref{eq:hat_F}, for
\begin{equation}\label{eq:saddle bulk}
  \hat{F}_z(w_\pm; x_0) := \left. \frac{\ud}{\ud z} \hat{F}(z; x_0) \right\rvert_{z = w_{\pm}} = 0.
\end{equation}
It is also straightforward to check
\begin{equation}
  \hat{F}_{zz}(w_{\pm}; x_0) := \left. \frac{\ud^2}{\ud z^2} \hat{F}(z; x_0) \right\rvert_{z = w_{\pm}} = \frac{1}{w_{\pm}}\left(M + 1 - \frac{\sin((M + 1)\varphi)}{\sin \varphi} e^{\mp iM\varphi}\right). \label{eq:second_derivative_bulk}
\end{equation}

\begin{figure}[t]
  \centering
  \begin{overpic}[scale=.6]{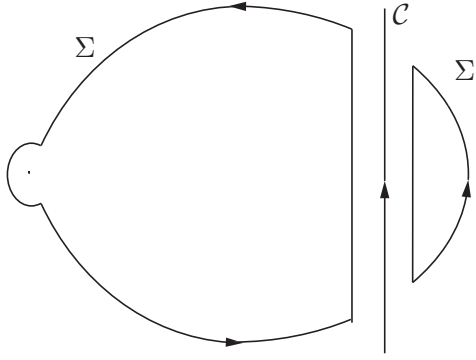}
    \put(15,75){$\Sigma$} \put(82,82){$\mathcal{C}$} \put(95,70){$\Sigma$}
  \end{overpic}
  \caption{The contours $\mathcal{C}$ and $\Sigma$ defined in \eqref{eq:C_contour_bulk} and \eqref{eq:Sigma_contour_bulk}, respectively.}
  \label{fig:Bulk}
\end{figure}
The shapes of the contours $\mathcal{C}$ and $\Sigma$ in \eqref{eq:deformed_contour} used in this subsection are schematically illustrated in Figure \ref{fig:Bulk}, and are precisely described as follows, based on the two contours $\tilde{\mathcal{C}}_{x_0}$ and $\tilde{\Sigma}^{\epsilon}$ explicitly constructed in Section \ref{subsec:contours_bulk}. 
The contour $\mathcal{C}$ is chosen to be
\begin{equation} \label{eq:C_contour_bulk}
  \mathcal{C} = n \tilde{\mathcal{C}}_{x_0},
\end{equation}
\ie, $\mathcal{C}$ is the vertical, upward contour through the points $nw_+$ and $nw_-$. To describe $\Sigma$, we let $\epsilon$ and $\epsilon'$ be two small enough positive constants. Then the contour $\Sigma$ is defined by
\begin{equation} \label{eq:Sigma_contour_bulk}
  \Sigma = \Sigma_{\curved} \cup \Sigma_{\vertical},
\end{equation}
where
\begin{equation}\label{def:sigm_curved_bulk}
\Sigma_{\curved} = \Sigma_1 \cup \Sigma_2,  \qquad \Sigma_{\vertical} = \Sigma_3 \cup \Sigma_4,
\end{equation}
and
\begin{equation} \label{eq:Sigma_contour_bulk_2}
  \begin{gathered}
    \begin{aligned}
      \Sigma_1 = {}& n\tilde{\Sigma}^r \cap \{ z \mid \Re z \leq \Re nw_{\pm} - \epsilon \}, \\
      \Sigma_2 = {}& n\tilde{\Sigma}^r \cap \{ z \mid \Re z \geq \Re nw_{\pm} + \epsilon \}, \\
    \end{aligned}
    \quad \text{with} \quad r = \frac{[\epsilon' n] + \frac{1}{2}}{n}, \\
    \begin{aligned}
      \Sigma_3 = {}& \text{vertical bar connecting the two ending points of $\Sigma_1$}, \\
      \Sigma_4 = {}& \text{vertical bar connecting the two ending points of $\Sigma_2$}.
    \end{aligned}
  \end{gathered}
\end{equation}
Note that $\Sigma$ consists of two disjoint closed contours: $\Sigma_1 \cup \Sigma_3$ and $\Sigma_2 \cup \Sigma_4$, whose orientations are counterclockwise. By the arguments at the end of Section \ref{subsec:notations}, such kind of contour deformation is allowed. Here we assume that $\Re nw_{\pm}$ is not an integer, so that $\mathcal{C}$ does not pass through any integer point. We further assume that $\epsilon$ is small enough so that $\mathcal{C}$, $\Sigma_3$ and $\Sigma_4$ all lie between two consecutive integers $k$ and $k + 1$. In the case that $\Re nw_{\pm} \in \intZ$ and $\mathcal{C}$ passes through an integer point, we simply shift contour $\mathcal{C}$ horizontally by $1/2$ to make it go between two consecutive integers, and all arguments below work in the same way.

In the asymptotic analysis of $K_n(x, y)$, we write \eqref{eq:deformed_contour} as
\begin{equation}
  K_n(x, y) = I_1 + I_2, \quad \text{where} \quad \left.
    \begin{aligned}
      I_1 \\
      I_2
    \end{aligned} \right\} = \frac{y^{-1}}{(2\pi i)^2} \times
  \begin{cases}
    \displaystyle \int_{\mathcal{C}} \ud s \oint_{\Sigma_{\curved}}  \ud t \frac{e^{F(s; y)}}{e^{F(t; x)}} \frac{1}{s - t}, \\
    \displaystyle \int_{\mathcal{C}} \ud s \oint_{\Sigma_{\vertical}}  \ud t \frac{e^{F(s; y)}}{e^{F(t; x)}} \frac{1}{s - t}.
  \end{cases}
\end{equation}

The following properties of $F(z; n^M x_0)$ on the contours $\Sigma_{\curved}$ and $\mathcal{C}$ will play an important role in our later analysis.
\begin{lem} \label{lem:bulk_ineq}
  There exists a constant $\delta > 0$ such that for $n$ large enough,
  \begin{align}
    \Re F(z; n^M x_0) \geq {}& \Re F(nw_{\pm}; n^M x_0) + \delta n \left\lvert \frac{z}{n} - w_{\pm} \right\rvert^2 & & \text{for $z \in \Sigma_{\curved} \cap D_{n^{\frac{3}{5}}}(nw_{\pm})$}, \label{eq:minimum_Sigma_local_F} \\
    \Re F(z; n^M x_0) > {}& \Re F(nw_{\pm}; n^M x_0) + \delta n^{\frac{1}{5}} & & \text{for $z \in \Sigma_{\curved}\setminus \left( D_{n^{\frac{3}{5}}}(nw_{+})\cup D_{n^{\frac{3}{5}}}(nw_{-})\right)$}, \label{eq:minimum_Sigma_F} \\
    \Re F(z; n^M x_0) \leq {}& \Re F(nw_{\pm}; n^M x_0) - \delta n \left\lvert \frac{z}{n} - w_{\pm} \right\rvert^2 & & \text{for $z \in \mathcal{C} \cap D_{n^{\frac{3}{5}}}(nw_{\pm}) $}, \label{eq:maximum_C_local_F} \\
    \Re F(z; n^M x_0) < {}& \Re F(nw_{\pm}; n^M x_0) - \delta n^{\frac{1}{5}} & & \text{for $z \in \mathcal{C}\setminus  \left( D_{n^{\frac{3}{5}}}(nw_{+})\cup D_{n^{\frac{3}{5}}}(nw_{-})\right)$}, \label{eq:maximum_C_F} \\
    \Re F(z; n^M x_0) < {}& \Re F(nw_{\pm}; n^M x_0) - \delta \lvert z \rvert & & \text{for $z \in \mathcal{C}\cap \{ \lvert z \rvert  >\delta^{-1}n \}$}. \label{eq:maximum_C_global_F}
  \end{align}
\end{lem}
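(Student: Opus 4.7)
My plan is to reduce each of the five inequalities to a quantitative bound on $\Re\hat{F}(\zeta; x_0) - \Re\hat{F}(w_{\pm}; x_0)$ along the rescaled contours $\tilde{\mathcal{C}}_{x_0}$ and $\tilde{\Sigma}^{r}$ via the substitution $\zeta = z/n$. Combining \eqref{eq:F_in_F_tilde} with \eqref{eq:defn_hat_F} one obtains
\begin{equation*}
  F(n\zeta;\, n^{M} x_0) = n\hat{F}(\zeta; x_0) + n \log n + R_{n}(\zeta),
\end{equation*}
uniformly for $\zeta$ in any fixed bounded region with $|\zeta|$ and $|\zeta - 1|$ bounded below, where $R_{n}(\zeta) = O(\log n)$ and, more importantly, $R_{n}(\zeta) - R_{n}(w_{\pm}) = O(|\zeta - w_{\pm}|)$ as $\zeta \to w_{\pm}$. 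The Stirling remainders are therefore subdominant compared with the target sizes $\delta n|\zeta - w_{\pm}|^{2}$, $\delta n^{1/5}$, and $\delta |z|$, so the lemma is equivalent to sharp inequalities for $\hat{F}$ itself.

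\textbf{Local bounds near $w_{\pm}$.} For $\zeta$ in the disc of radius $n^{-2/5}$ about $w_{\pm}$, a Taylor expansion gives
\begin{equation*}
  \hat{F}(\zeta; x_0) - \hat{F}(w_{\pm}; x_0) = \tfrac{1}{2}\hat{F}_{zz}(w_{\pm}; x_0)(\zeta - w_{\pm})^{2} + O(|\zeta - w_{\pm}|^{3}),
\end{equation*}
with the cubic error at most $O(n \cdot n^{-6/5}) = O(n^{-1/5})$ after the factor of $n$, hence absorbable into $\delta$. A direct computation from \eqref{eq:second_derivative_bulk} shows $\Re\hat{F}_{zz}(w_{\pm}; x_0) > 0$ for every $\varphi \in (0, \pi/(M + 1))$. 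On $\mathcal{C}$, which is vertical through $nw_{\pm}$, $(\zeta - w_{\pm})^{2}$ is a negative real, so the quadratic term is at most $-c|\zeta - w_{\pm}|^{2}$, giving \eqref{eq:maximum_C_local_F}. For $\Sigma_{\curved}$, one uses the construction of $\tilde{\Sigma}^{r}$ in Section \ref{sec:ContConst}, where the tangent at $w_{\pm}$ is chosen precisely so that $(\zeta - w_{\pm})^{2}$ lies in the half-plane on which $\Re\bigl[\hat{F}_{zz}(w_{\pm})\,\cdot\bigr]$ is positive, producing \eqref{eq:minimum_Sigma_local_F}.

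\textbf{Non-local bounds and decay at infinity.} The non-local inequalities \eqref{eq:minimum_Sigma_F} and \eqref{eq:maximum_C_F} follow once it is known, as will be established in Section \ref{sec:ContConst}, that $\tilde{\Sigma}^{r}$ lies strictly on the side $\Re\hat{F}(\zeta; x_0) > \Re\hat{F}(w_{\pm}; x_0)$ away from $w_{\pm}$, and that $\tilde{\mathcal{C}}_{x_0}$ lies strictly on the opposite side. A compactness argument on any fixed annulus $\{n^{-2/5} \leq |\zeta - w_{\pm}| \leq C\}$, combined with the quadratic bound from the previous step, yields a gap $\gtrsim n^{-4/5}$ on $\hat{F}$ and hence $\gtrsim n^{1/5}$ after multiplying by $n$. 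For \eqref{eq:maximum_C_global_F}, I would expand $\hat{F}(\zeta; x_0)$ for large $|\Im \zeta|$ along the vertical line: the cancellation between $(M + 1)\zeta(\log \zeta - 1)$ and $-(\zeta - 1)(\log(\zeta - 1) - 1)$ leaves $M\zeta \log \zeta + O(\log \zeta)$, and on $\zeta = a + it$ with $|t|$ large one has $\Re[M\zeta \log \zeta] \sim -\tfrac{M\pi}{2}|t|$, yielding the linear decay $-\delta |z|$ after scaling.

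\textbf{Main obstacle.} The essential difficulty is the global level-set picture: $\tilde{\Sigma}^{r}$ must encircle the saddles $w_{\pm}$ while remaining on the strictly higher side of $\Re\hat{F}(w_{\pm}; x_0)$, and $\tilde{\mathcal{C}}_{x_0}$ must run from $-i\infty$ to $+i\infty$ through $w_{\pm}$ on the strictly lower side. Verifying this globally, rather than just locally at the saddles, requires the explicit construction based on the parametrization \eqref{eq:para x} that is carried out in Section \ref{sec:ContConst}; the present lemma is then essentially a packaging of those geometric facts together with the Stirling reduction.
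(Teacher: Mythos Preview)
Your overall strategy matches the paper's: reduce $F$ to $n\hat F$ via Stirling, then invoke the geometric properties of $\tilde\Sigma^r$ and $\tilde{\mathcal C}_{x_0}$ proved in Section~\ref{sec:ContConst} (Lemmas~\ref{prop:bulk_contours_properties} and~\ref{lem:for_C}) to obtain the local quadratic behaviour, the global gap, and the linear decay at infinity.

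There is, however, a genuine gap in your reduction step. Your claim that $F(n\zeta;n^Mx_0)=n\hat F(\zeta;x_0)+n\log n+R_n(\zeta)$ with $R_n=O(\log n)$ uniformly ``for $\zeta$ in any fixed bounded region with $|\zeta|$ and $|\zeta-1|$ bounded below'' is incorrect: Stirling's formula \eqref{eq:stirling} requires $|\arg w|\le\pi-\varepsilon$, not merely $|w|$ large. On the arc portion of $\Sigma_{\curved}$ (the part of $n\tilde\Sigma^r$ on the circle $\{|z|=nr\}$, which sweeps through the negative real axis) one has $\arg z\to\pm\pi$ and Stirling fails for each $\Gamma(z+\nu_j+1)$. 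Likewise, when $\Re w_\pm\le 1$ the vertical line $\mathcal C$ meets the region where $\arg(z-n)$ is near $\pm\pi$, and Stirling fails for $\Gamma(z-n+1)$. On these pieces $F(z;n^Mx_0)$ is \emph{not} uniformly approximated by $n\hat F(z/n;x_0)$.

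The paper handles both bad pieces via the reflection formula $\Gamma(w)\Gamma(1-w)=\pi/\sin(\pi w)$. On the arc it rewrites the $\Gamma(z+\nu_j+1)$ in terms of $\Gamma(-z-\nu_j)$ (whose argument is now safely away from $\pm\pi$), producing an explicit correction $-M\log|2\sin(\pi z)|$; the choice $nr=[\epsilon' n]+\tfrac12$ keeps the arc a definite distance from the integers so that this correction is $o(n)$, and one concludes using $\Re\hat F(0;x_0)>\Re\hat F(w_\pm;x_0)$. A parallel argument converting $\Gamma(z-n+1)$ to $\Gamma(n-z)$ handles the inner part of $\mathcal C$. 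Without this device, \eqref{eq:minimum_Sigma_F} and \eqref{eq:maximum_C_F} are not established on the full contours, so your outline is incomplete precisely where the paper does the most work.
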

Since the proof of this lemma is lengthy and technical, we decide to postpone it to Section \ref{subsec:proof_lem_bulk_ineq}.

To grasp the meaning of Lemma \ref{lem:bulk_ineq} before getting involved in the delicate inequalities, it is better to consult Figure \ref{fig:level_set}, the level line $\Re \hat{F}(z; x_0) = \hat{F}(w_{\pm}; x_0)$. Since the $n \to \infty$ behaviour of $F(z; n^M x_0)$ is determined by $\hat{F}(z/n; x_0)$, as shown in \eqref{eq:F_in_F_tilde}, \eqref{eq:defn_hat_F} and \eqref{eq:hat_F}, by comparing the shapes of $\Sigma_{\curved}$ and $\mathcal{C}$ with the level line in Figure \ref{fig:level_set}, we find that the value $\Re F(z; n^M x_0)$ attains its maximum over $\mathcal{C}$ around $nw_{\pm}$, while it attains its minimum over $\Sigma_{\curved}$ around these two points.

\begin{figure}[htb]
  \centering
  \includegraphics[width=0.5\linewidth]{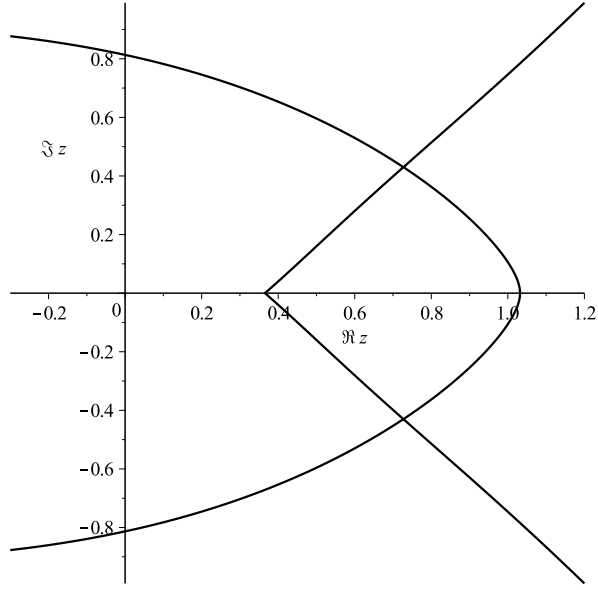}
  \caption{The level line $\Re \hat{F}(z; x_0) = \Re \hat{F}(w_{\pm}; x_0)$ with $M = 3$ and $x_0 = 1$. The level line has two intersections $w_{\pm}$, and it divides the complex plane into five parts. In the top, central, and bottom parts, $\Re \hat{F}(z; x_0) < \Re \hat{F}(w_{\pm}; x_0)$, while in the left and right parts, $\Re \hat{F}(z; x_0) > \Re \hat{F}(w_{\pm}; x_0)$.}
  \label{fig:level_set}
\end{figure}

\subsubsection{Evaluation of $I_2$ as $\epsilon \to 0$} \label{subsubsec:I_2}

Note that the contour $\Sigma$ depends on a parameter $\epsilon$. By taking $\epsilon \to 0$, we have
\begin{equation} \label{eq:formula_I_2}
  \begin{split}
    \lim_{\epsilon \to 0} I_2 := {}& \lim_{\epsilon \to 0} \frac{y^{-1}}{(2\pi i)^2} \int_\mathcal{C} \ud s \int_{\Sigma_3 \cup \Sigma_4} \ud t \frac{e^{F(s; y)}}{e^{F(t; x)}} \frac{1}{s - t} \\
    = {}& \frac{y^{-1}}{2\pi i} \int^{nw_+}_{nw_-} \frac{e^{F(s; y)}}{e^{F(s; x)}}\ud s = \frac{y^{-1}}{2\pi i} \int^{nw_+}_{nw_-} \left( \frac{x}{y} \right)^s \ud s \\
    = {}& \frac{1}{2\pi i y \log(\frac{x}{y})} \left( \left( \frac{x}{y} \right)^{nw_+} - \left( \frac{x}{y} \right)^{nw_-} \right).
  \end{split}
\end{equation}
Here we assume that $x \neq y$, and use Cauchy's theorem in the first step. With the values of $x, y$ given in \eqref{eq:defn_xy_bulk}, it is readily seen that as $n\to\infty$,
\begin{equation}
\frac{x}{y}=1+\frac{\xi-\eta}{n\rho(\varphi)x_0}+\mathcal{O}\left(n^{-2}\right),\quad
y\log\left(\frac{x}{y}\right)=\frac{n^{M-1}(\xi-\eta)}{\rho(\varphi)}\left(1+\mathcal{O}\left(n^{-1}\right)\right),
\end{equation}
where $\varphi$ is related to $x_0$ by \eqref{eq:para x} and $\rho(\varphi)$ is defined in \eqref{eq:density}. These approximations, together with $w_{\pm}$ given in \eqref{eq:defn_w_pm}, imply that if $\xi \neq \eta$, then
\begin{align} \label{eq:result_I_1}
  \lim_{\epsilon \to 0} I_2 = {}& \frac{\rho(\varphi)}{2\pi i n^{M-1}(\xi - \eta)\left(1 + \bigO \left(n^{-1}\right)\right)} \left( e^{\frac{(\xi - \eta)w_+}{\rho(\varphi)x_0}} \left(1 + \bigO \left(n^{-1}\right)\right) - e^{\frac{(\xi - \eta)w_-}{\rho(\varphi)x_0}} \left(1 + \bigO \left(n^{-1}\right)\right) \right)  \nonumber
 \\
 = {}& \frac{e^{\pi \xi \cot\varphi}}{e^{\pi \eta \cot\varphi}} \frac{\rho(\varphi)}{n^{M - 1}} \frac{\sin\pi(\xi - \eta)}{\pi(\xi - \eta)} + \bigO\left(n^{-M}\right)
 \end{align}
for large $n$. Note that although we define $I_2$ as a function with real variables $x$ and $y$, it is also a well defined analytic function if we understand $x$ and $y$ as complex variables. Then \eqref{eq:result_I_1} also holds if $\xi$ and $\eta$ are distinct complex numbers. By analytic continuation we have that \eqref{eq:result_I_1} also holds for $\xi = \eta$, and particularly for the case that they are identical real numbers.

\subsubsection{Evaluation of $I_1$ as $\epsilon \to 0$} \label{subsubsec:I_1}

Parallel to \eqref{eq:formula_I_2}, by taking the limit $\epsilon \to 0$, it follows
\begin{equation} \label{eq:formula_I_1}
  \begin{split}
    \lim_{\epsilon \to 0} I_1 := {}& \lim_{\epsilon \to 0} \frac{y^{-1}}{(2\pi i)^2} \int_\mathcal{C} \ud s \int_{\Sigma_1 \cup \Sigma_2} \ud t \frac{e^{F(s; y)}}{e^{F(t; x)}} \frac{1}{s - t} \\
    = {}& \frac{y^{-1}}{(2\pi i)^2} \lim_{\epsilon \to 0_+} \int_{n\tilde{\Sigma}^r \setminus \big( D_{\epsilon}(nw_+) \cup D_{\epsilon}(nw_-) \big)} \left( \int_\mathcal{C} \ud s \frac{e^{F(s; y)}}{e^{F(t; x)}} \frac{1}{s - t} \right)\ud t \\
    = {}& \frac{y^{-1}}{(2\pi i)^2} \pv \int_{n\tilde{\Sigma}^r} \left( \int_\mathcal{C} \ud s \frac{e^{F(s; y)}}{e^{F(t; x)}} \frac{1}{s - t} \right)\ud t,
  \end{split}
\end{equation}
where $\pv$ means the Cauchy principal value. For the definition and properties of Cauchy principal value for contour integral; see \cite[Section 8.3, Page 191]{Kanwal97}. We remark that the integral with respect to $t$ on $n\tilde{\Sigma}^r$ in \eqref{eq:formula_I_1} is Riemann integrable, but has discontinuities at $nw_{\pm}$, the intersections of $\mathcal{C}$ and $n\tilde{\Sigma}^r$. Thus there is no serious integrability problem in the Cauchy principal value for the integral over $n\tilde{\Sigma}^r$.

To estimate the limit of $I_1$, we define
\begin{equation}
  \mathcal{C}^{\pm}_{\local} =\mathcal{ C} \cap D_{n^{\frac{3}{5}}}(nw_{\pm}), \qquad \Sigma^{\pm}_{\local} = \Sigma \cap D_{n^{\frac{3}{5}}}(nw_{\pm}),
\end{equation}
and show that the main contribution to the Cauchy principal integral is from $\mathcal{C}^+_{\local} \times \Sigma^+_{\local}$ and $\mathcal{C}^-_{\local} \times \Sigma^-_{\local}$ in the sense that  remaining part of the integral is negligible in the asymptotic analysis.

It is clear that for $s \in \mathcal{C}^+_{\local}$ and $t \in \Sigma^+_{\local}$, we can approximate $F(s; n^M x_0)$ and $F(t; n^M x_0)$ by $\tilde{F}$ as in \eqref{eq:F_in_F_tilde} and furthermore by $\hat{F}$ that is defined in \eqref{eq:hat_F}. We make the change of variables
\begin{equation} \label{eq:relation_st_uv}
  s = nw_+ + n^{\frac{1}{2}} u, \qquad t = nw_+ + n^{\frac{1}{2}} v.
\end{equation}
It then follows from \eqref{eq:defn_xy_bulk}, \eqref{eq:saddle bulk} and \eqref{eq:second_derivative_bulk} that, uniformly for all $s \in D_{n^{3/5}}(nw_+)$,
\begin{equation} \label{eq:asy_e^{F(s;y)}_at_w_+}
  \begin{split}
    e^{F(s; y)} = {}& e^{F\left(s; n^M x_0\right)}\left (1 + \frac{\eta}{n\rho(\varphi) x_0}\right)^{-s} \\
      = {}& n^n e^{\tilde{c}_M}e^{n \hat{F}\left(w_+ + n^{-\frac{1}{2}} u; x_0\right)}\left (1 + \frac{\eta}{n\rho(\varphi) x_0}\right)^{-s} \left(1 + \bigO \left (n^{-\frac{1}{2}}\right) \right) \\
      = {}& n^n e^{\tilde{c}_M+ n\hat{F}\left(w_+; x_0\right)} e^{\frac{\hat{F}_{zz}(w_+; x_0)}{2} u^2}\left (1 + \frac{ \eta}{n \rho(\varphi) x_0}\right)^{-s} \left(1 + \bigO\left(n^{-\frac{1}{5}}\right)\right) \\
      = {}& n^n e^{\tilde{c}_M+n \hat{F}(w_+; x_0)} e^{\frac{\hat{F}_{zz}(w_+; x_0)}{2} u^2} e^{-\frac{w_+ \eta}{x_0\rho(\varphi)} } \left(1 + \bigO\left(n^{-\frac{1}{5}}\right)\right),
  \end{split}
\end{equation}
where
\begin{equation}
\tilde c_M=\sum_{j=1}^M\left(\nu_j+\frac{1}{2}\right) \log(nw_+) + \frac{1}{2}\log\left(\frac{w_+}{1-w_+}\right)\\
   + \frac{M}{2} \log(2\pi).
\end{equation}
A parallel argument yields that uniformly for $t \in D_{n^{3/5}}(nw_+)$,
\begin{equation} \label{eq:asy_e^{F(t;x)}_at_w_+}
  e^{F(t; x)} = n^n e^{\tilde{c}_M+n \hat{F}(w_+; x_0)} e^{\frac{\hat{F}_{zz}(w_+; x_0)}{2} v^2} e^{-\frac{ w_+ \xi}{x_0\rho(\varphi)}} \left(1 + \bigO\left(n^{-\frac{1}{5}}\right)\right).
\end{equation}
As a consequence, (noting that $s - t = \sqrt{n} (u - v)$)
\begin{multline} \label{eq:crossing_estimate}
  \pv \int_{\mathcal{C}^+_{\local}} \ud s \oint_{\Sigma^+_{\local}}  \ud t \frac{e^{F(s; y)}}{e^{F(t; x)}} \frac{1}{s - t} \\
  = \frac{e^{-\frac{w_+(\xi - \eta)}{\rho(\varphi)x_0} }}{\sqrt{n}} \pv \int_{\mathcal{C}^+_{\local}} \ud s \oint_{\Sigma^+_{\local}}  \ud t \frac{e^{\frac{\hat{F}_{zz}(w_+; x_0)}{2} u^2}}{e^{\frac{\hat{F}_{zz}(w_+; x_0)}{2} v^2}} \frac{1 + \bigO(n^{-\frac{1}{5}})}{u - v},
\end{multline}
where on the right-hand side, we understand $u$ and $v$ as functions of $s$ and $t$ respectively, as defined by \eqref{eq:relation_st_uv}. Note that the $\bigO(n^{-1/5})$ term in the integrand on the right-hand side of \eqref{eq:crossing_estimate} is uniform and analytic in $D_{n^{3/5}}(nw_+)$. Comparing the result of \eqref{eq:asy_e^{F(s;y)}_at_w_+} with $y = n^M x_0$ and Lemma \ref{lem:bulk_ineq}, we have that there exists a constant $\epsilon_1 > 0$ such that for all $s \in \mathcal{C}^+_{\local}$,
\begin{equation} \label{eq:descent_direction}
  \left\lvert e^{\frac{\hat{F}_{zz}(w_+; x_0)}{2} u^2} \right \rvert \leq e^{-\epsilon_1 \lvert u \rvert^2}.
\end{equation}
Similarly, a comparison between \eqref{eq:asy_e^{F(t;x)}_at_w_+} and \eqref{eq:minimum_Sigma_local_F} in Lemma \ref{lem:bulk_ineq} implies that there is a constant $\epsilon_2 > 0$ such that for all $t \in \Sigma^+_{\local}$,
\begin{equation} \label{eq:ascent_direction}
  \left  \lvert e^{\frac{\hat{F}_{zz}(w_+; x_0)}{2} v^2}\right \rvert \geq e^{\epsilon_2 \lvert v \rvert^2}.
\end{equation}
Hence a standard application of the saddle point method yields
\begin{multline} \label{eq:bulk_est_insenssial_1}
  \pv \int_{\mathcal{C}^+_{\local}} \ud s \oint_{\Sigma^+_{\local}}  \ud t \frac{e^{F(s; y)}}{e^{F(t; x)}} \frac{1}{s - t} \\
  = \lim_{\epsilon \to 0_+} \int_{\mathcal{C}^+_{\local}} \ud s \oint_{\Sigma^+_{\local} \setminus D_{\epsilon}(nw_+)}  \ud t \frac{e^{F(s; y)}}{e^{F(t; x)}} \frac{1}{s - t} = \bigO\left(n^{\frac{1}{2}}\right).
\end{multline}
In a similar manner, by setting
\begin{equation}
  s = nw_- + n^{\frac{1}{2}} u, \quad t = nw_- + n^{\frac{1}{2}} v,
\end{equation}
 we have
\begin{equation} \label{eq:bulk_est_insenssial_2}
  \pv \int_{\mathcal{C}^-_{\local}} \ud s \oint_{\Sigma^-_{\local}}  \ud t \frac{e^{F(s; y)}}{e^{F(t; x)}} \frac{1}{s - t} = \bigO\left(n^{\frac{1}{2}}\right).
\end{equation}

Finally, we note by \eqref{eq:minimum_Sigma_F}, \eqref{eq:maximum_C_F} and \eqref{eq:maximum_C_global_F} in Lemma \ref{lem:bulk_ineq} that there exists $\epsilon_3 > 0$ such that for large enough $n$
\begin{align}
  \left\lvert e^{-F(t; x)} \right\rvert = \left\lvert e^{-F\left(t; n^M x_0\right)} \left( 1 + \frac{\xi}{n \rho(\varphi)x_0} \right)^t \right\rvert < {}& \left\lvert e^{-F\left(nw_{\pm}; n^Mx_0\right)} \right\rvert e^{-\epsilon_3 n^{\frac{1}{5}}} \quad \text{if $t \in \Sigma \setminus \Sigma^{\pm}_{\local}$}, \label{eq:est_e^F(t;x)_bulk_outer} \\
  \left\lvert e^{F(s;y)} \right\rvert = \left\lvert e^{F\left(s; n^M x_0\right)} \left( 1 + \frac{ \eta}{n \rho(\varphi) x_0} \right)^{-s} \right\rvert < {}&
  \begin{cases}
    \left\lvert e^{F\left(nw_{\pm}; n^Mx_0\right)} \right\rvert e^{-\epsilon_3 n^{\frac{1}{5}}} & \text{if $s \in \mathcal{C} \setminus \mathcal{C}^{\pm}_{\local}$},
    \vspace{1mm} \\
    \left\lvert e^{F\left(nw_{\pm}; n^Mx_0\right)} \right \rvert e^{-\epsilon_3 \lvert s \rvert} & \text{if $s \in \mathcal{C}\cap \{ \lvert s \rvert > \frac{n}{\epsilon_3} \}$.}
  \end{cases} \label{eq:est_e^F(s;y)_bulk_outer}
\end{align}
With the aid of the estimates \eqref{eq:est_e^F(t;x)_bulk_outer}, \eqref{eq:est_e^F(s;y)_bulk_outer}, \eqref{eq:asy_e^{F(s;y)}_at_w_+}, \eqref{eq:asy_e^{F(t;x)}_at_w_+}, \eqref{eq:descent_direction} and \eqref{eq:ascent_direction}, we obtain
\begin{multline} \label{eq:bulk_est_insenssial_3}
  \pv \int_\mathcal{C} \ud s \oint_{\Sigma}  \ud t \frac{e^{F(s; y)}}{e^{F(t; x)}} \frac{1}{s - t} - \pv \int_{\mathcal{C}^+_{\local}} \ud s \oint_{\Sigma^+_{\local}}  \ud t \frac{e^{F(s; y)}}{e^{F(t; x)}} \frac{1}{s - t} \\
  - \pv \int_{\mathcal{C}^-_{\local}} \ud s \oint_{\Sigma^-_{\local}}  \ud t \frac{e^{F(s; y)}}{e^{F(t; x)}} \frac{1}{s - t} = \bigO\left(e^{-\epsilon n^{\frac{1}{5}}}\right).
\end{multline}
This, together with \eqref{eq:bulk_est_insenssial_1}, \eqref{eq:bulk_est_insenssial_2} and \eqref{eq:formula_I_1}, implies
\begin{equation} \label{eq:result_I_2}
  \lim_{\epsilon \to 0} I_1 = y^{-1} \left( \bigO(n^{\frac{1}{2}} + n^{\frac{1}{2}} + \bigO(e^{-\epsilon n^{\frac{1}{5}}}) \right) = \bigO\left(n^{-M + \frac{1}{2}}\right),
\end{equation}
where we use that $y = \bigO(n^M)$. Summing up \eqref{eq:result_I_1} and \eqref{eq:result_I_2} and letting $n \to \infty$, we derive \eqref{eq:bulk univ} and complete the proof of Theorem \ref{thm:bulk}.

\subsection{Proof of Theorem \ref{thm:edge}} \label{subsec:proof_edge}
In view of the scalings of $x,y$ in \eqref{eq:edge univer}, we set
\begin{equation} \label{eq:scaling_of_xy_Airy}
x = n^M \left(x_\ast + \frac{c_2 \xi}{n^{2/3}}\right), \quad y = n^M \left(x_\ast + \frac{c_2 \eta}{n^{2/3}}\right),
\end{equation}
where $\xi,\eta\in \mathbb{R}$,
\begin{equation*}
  x_\ast = \frac{(M + 1)^{M + 1}}{M^M} \quad \text{and} \quad c_2=\frac{(M + 1)^{M + \frac{2}{3}}}{2^{\frac{1}{3}} M^{M - 1}},
\end{equation*}
are defined in \eqref{def:c2}. Thus, we write \eqref{eq:deformed_contour} as
\begin{equation} \label{eq:double_contour_for_Airy}
  K_n(x,y) = \frac{y^{-1}}{(2\pi i)^2} \int_\mathcal{C} \ud s \oint_{\Sigma}  \ud t \frac{e^{F(s; n^M x_\ast)} (1 + n^{-\frac{2}{3}} c^{-1}_1 \eta)^{-s}}{e^{F(t; n^M x_\ast)} (1 + n^{-\frac{2}{3}} c^{-1}_1 \xi)^{-t}} \frac{1}{s - t}
\end{equation}
with
\begin{equation}\label{def:c1}
c_1=\frac{x_\ast}{c_2}=\frac{2^{\frac{1}{3}} (M + 1)^{\frac{1}{3}}}{M}.
\end{equation}

In this case, we will choose the contours $\mathcal{C}$ and $\Sigma$ in \eqref{eq:deformed_contour} such that $\Sigma$ is on the left hand side of $\mathcal{C}$, as illustrated in Figure \ref{fig:SoftContour}.
\begin{figure}
  \centering
  \begin{overpic}[scale=.6]{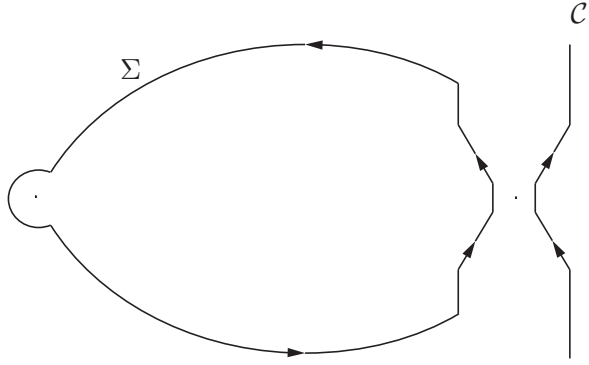}
    \put(20,50){$\Sigma$} \put(100,60){$\mathcal{C}$}
  \end{overpic}
  \caption{The contours $\mathcal{C}$ and $\Sigma$ defined in \eqref{def:Csoft} and \eqref{eq:sigma_soft}}
  \label{fig:SoftContour}
\end{figure}
To describe $\mathcal{C}$, we denote
\begin{equation}\label{def:z0}
z_0 = 1 + \frac{1}{M},
\end{equation}
and then define
\begin{equation}\label{def:Csoft}
  \mathcal{C} = \mathcal{C}_{\local} \cup \mathcal{C}_{\glob},
\end{equation}
where
\begin{multline}\label{eq:clocal}
  \mathcal{C}_{\local} = \left\{ nz_0 + c_1 n^{\frac{2}{3}} re^{\pi i/3} ~\Big{|}~ r \in \left[1, n^{\frac{1}{30}}\right] \right\} \cup \left\{ nz_0 + c_1 n^{\frac{2}{3}} re^{-\pi i/3} ~\Big{|}~ r \in \left[1, n^{\frac{1}{30}}\right] \right\} \\
  \cup \left\{ nz_0 + \frac{c_1 n^{\frac{2}{3}}}{2} + ic_1 n^{\frac{2}{3}} r ~\Big{|}~ r \in \left[-\frac{\sqrt{3}}{2}, \frac{\sqrt{3}}{2}\right] \right\},
\end{multline}
and
\begin{equation}
  \mathcal{C}_{\glob} = \left\{ nz_0 + \frac{1}{2} c_1 n^{\frac{7}{10}} + iy ~\Big{|}~ y \in \left(-\infty, -\frac{\sqrt{3}}{2} c_1 n^{\frac{7}{10}}\right] \cup \left[\frac{\sqrt{3}}{2} c_1 n^{\frac{7}{10}}, \infty \right) \right\}.
\end{equation}
The orientation of $\mathcal{C}$ is taken to be upward. The contour $\Sigma$ is defined as the union of contours
\begin{equation}\label{eq:sigma_soft}
  \Sigma = \Sigma_{\local} \cup \Sigma_{\glob}, \quad \text{and} \quad \Sigma_{\glob} = \Sigma_{\curved} \cup \Sigma_{\vertical}.
\end{equation}
The contour $\Sigma_{\local}$ is defined by
\begin{multline}\label{eq:sigmlocal}
    \Sigma_{\local} = \left\{ nz_0 + c_1 n^{\frac{2}{3}} re^{2\pi i/3} ~\Big{|}~ r \in \left[1, n^{\frac{1}{30}}\right] \right\} \cup \left\{ nz_0 + c_1 n^{\frac{2}{3}} re^{-2\pi i/3} ~\Big{|}~ r \in \left[1, n^{\frac{1}{30}}\right] \right\} \\
    \cup \left\{ nz_0 - \frac{c_1 n^{\frac{2}{3}}}{2} + ic_1 n^{\frac{2}{3}} r ~\Big{|}~ r \in \left [-\frac{\sqrt{3}}{2}, \frac{\sqrt{3}}{2}\right] \right\}.
\end{multline}
The contour $\Sigma_{\glob}$ depends on a small constant $\epsilon > 0$. Define
\begin{equation}
r = \frac{[\epsilon n] + \frac{1}{2}}{n}.
\end{equation}
With the contour $\tilde{\Sigma}^r$ constructed in Section \ref{subsec:contours_bulk}, we denote by $z_{\pm}\in\mathbb{C}_{\pm}$ the two intersection points of $\tilde{\Sigma}^r$ with the vertical line $\Re z = z_0 - \frac{1}{2} c_1 n^{-3/10}$. We then define
\begin{equation}
  \Sigma_{\curved} = n\tilde{\Sigma}^r \cap \left\{\Re z < nz_0 - \frac{1}{2} c_1 n^{\frac{7}{10}} \right\}
\end{equation}
and
\begin{equation}
   \Sigma_{\vertical} = \text{two vertical line segments connecting $nz_{\pm}$ and $nz_0 \pm c_1 n^{\frac{7}{10}} e^{2\pi i/3}$}.
\end{equation}
Note that $\Sigma$ is a closed contour with counterclockwise orientation.

Similar to Lemma \ref{lem:bulk_ineq}, we have the following properties of $F(z; n^M x_*)$ on the contours $\mathcal{C}$ and $\Sigma$.
\begin{lem} \label{lem:edge_ineq}
  There exists a positive constant $\delta > 0$ such that for $n$ large enough,
  \begin{align}
    \Re F(z; n^M x_*) > {}& \Re F(nz_0; n^M x_*) + \delta n^{\frac{1}{10}} & & \text{for $z \in \Sigma_{\glob}$}, \label{eq:F_on_Sigma_edge} \\
    \Re F(z; n^M x_*) < {}& \Re F(nz_0; n^M x_*) - \delta n^{\frac{1}{10}} & & \text{for $z \in \mathcal{C}_{\glob}$}, \label{eq:F_on_C_edge} \\
    \Re F(z; n^M x_*) < {}& \Re F(nz_0; n^M x_*) - \delta \lvert z \rvert & & \text{for $z \in \mathcal{C}_{\glob} \cap \{ \lvert z \rvert > \delta^{-1} n \}$}. \label{eq:F_on_C_infty_edge}
  \end{align}
\end{lem}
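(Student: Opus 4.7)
The plan is to reduce all three inequalities to bounds on $\hat{F}(w; x_\ast)$ along the rescaled contours $w = z/n$, using the Stirling expansion \eqref{eq:F_in_F_tilde}--\eqref{eq:hat_F}. Since $\mathcal{C}_{\glob}$ and $\Sigma_{\glob}$ lie bounded away from the branch points $w = 0, 1$ of $\hat{F}$, this gives
\[
  \Re F(z; n^M x_\ast) - \Re F(nz_0; n^M x_\ast) = n \bigl[\Re\hat{F}(z/n; x_\ast) - \hat{F}(z_0; x_\ast)\bigr] + O(\log n),
\]
so it suffices to prove the analogues of the three bounds for $\hat{F}$ with $n^{-9/10}$ in place of $n^{1/10}$ (and $|w|$ in place of $|z|/n$ in \eqref{eq:F_on_C_infty_edge}). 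A direct computation from \eqref{eq:hat_F} yields $\hat{F}_z(z_0; x_\ast) = \hat{F}_{zz}(z_0; x_\ast) = 0$ and $\hat{F}_{zzz}(z_0; x_\ast) = M^3/(M+1) > 0$, so locally
\[
  \hat{F}(w; x_\ast) - \hat{F}(z_0; x_\ast) = \frac{M^3}{6(M+1)} (w - z_0)^3 + O\bigl((w - z_0)^4\bigr).
\]
Combined with $c_1^3 = 2(M+1)/M^3$, this shows that the soft-edge normalization in \eqref{def:c2}, \eqref{def:c1} has been chosen so that at the inner endpoints of $\mathcal{C}_{\glob}$ (where $w - z_0 = c_1 n^{-3/10} e^{\pm i\pi/3}$) and of $\Sigma_{\vertical}$ (where $w - z_0 = c_1 n^{-3/10} e^{\pm 2\pi i/3}$), the cubic contribution evaluates to exactly $\mp \frac{1}{3} n^{-9/10}$, respectively.

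For the vertical segments, set $g_\xi(\tau) := \Re\hat{F}(\xi + i\tau; x_\ast)$ for $\xi > 1$. By the Cauchy--Riemann equations,
\[
  -g_\xi'(\tau) = h_\xi(\tau) := (M+1)\arctan(\tau/\xi) - \arctan\bigl(\tau/(\xi-1)\bigr),
\]
and combining over a common denominator,
\[
  h_\xi'(\tau) = \frac{M\xi(\xi-1)(\xi-z_0) + \tau^2(M\xi + 1)}{(\xi^2 + \tau^2)\bigl((\xi-1)^2 + \tau^2\bigr)}.
\]
On $\mathcal{C}_{\glob}$, where $\xi = z_0 + c_1 n^{-3/10}/2 > z_0$, the numerator is strictly positive on $[0, \infty)$, so $g_\xi$ is strictly decreasing for $\tau > 0$. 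The cubic evaluation at the inner endpoint $\tau = \sqrt{3} c_1 n^{-3/10}/2$ gives $g_\xi - \hat{F}(z_0; x_\ast) = -\frac{1}{3} n^{-9/10} + O(n^{-12/10})$, and monotonicity carries this bound to all larger $|\tau|$, yielding \eqref{eq:F_on_C_edge}. For \eqref{eq:F_on_C_infty_edge}, the crude asymptotics $\Re[w\log w] \sim -|\tau|\pi/2$ and $\Re[(w-1)\log(w-1)] \sim -|\tau|\pi/2$ as $|\tau| \to \infty$ give $\Re\hat{F}(w; x_\ast) \sim -\frac{M\pi}{2}|\tau| \leq -c|w|$ for $|w|$ large, which dominates the $O(\log n)$ Stirling correction.

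On $\Sigma_{\vertical}$, where $\xi' = z_0 - c_1 n^{-3/10}/2 < z_0$, the numerator of $h_{\xi'}'$ changes sign at some $\tau_\ast \sim n^{-3/20}$; consequently $g_{\xi'}$ is first increasing and then decreasing, with its unique local maximum at some $\tau_{**} \sim n^{-3/20}$. Since the inner endpoint $\tau_{\mathrm{in}} := \sqrt{3} c_1 n^{-3/10}/2 \sim n^{-3/10}$ is much smaller than $\tau_{**}$, the minimum of $g_{\xi'}$ on $\Sigma_{\vertical}$ is attained at one of its endpoints. At $\tau_{\mathrm{in}}$ the cubic computation gives $g_{\xi'} - \hat{F}(z_0; x_\ast) = +\frac{1}{3} n^{-9/10} + O(n^{-12/10})$; at the outer endpoint $z_+$ (meeting $\tilde{\Sigma}^r$) and throughout $\Sigma_{\curved} \subset n\tilde{\Sigma}^r$, a uniform constant gap $\Re\hat{F}(w; x_\ast) - \hat{F}(z_0; x_\ast) \geq c > 0$ is inherited from the construction of $\tilde{\Sigma}^r$ in Section \ref{subsec:contours_bulk} (an analogue of \eqref{eq:minimum_Sigma_F}). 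Combining these gives \eqref{eq:F_on_Sigma_edge}. The main obstacle is the delicate matching between the local cubic behaviour at scale $c_1 n^{-3/10}$ and the global behaviour on $\tilde{\Sigma}^r$: the construction of $\tilde{\Sigma}^r$ in Section \ref{subsec:contours_bulk} must be arranged so that its rightmost excursion reaches the line $\Re w = z_0 - c_1 n^{-3/10}/2$, with the intersection point $z_+$ having $\Im z_+$ bounded below by a positive constant, while $\Re \hat{F}$ stays uniformly above $\hat{F}(z_0; x_\ast)$ on all of $\tilde{\Sigma}^r$; granted this, assembling the local cubic and global constant-gap estimates completes the proof.
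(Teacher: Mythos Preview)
Your approach is essentially the paper's: Stirling reduces everything to $\hat F$, the cubic Taylor expansion at $z_0$ handles the inner endpoints of $\mathcal C_{\glob}$ and $\Sigma_{\vertical}$, and monotonicity of $\Re\hat F$ along vertical lines carries the bound outward. The paper packages this monotonicity in Lemmas~\ref{prop:bulk_contours_properties} and~\ref{lem:for_C} (using \eqref{eq:hat_F_along_Sigma_Airy} for $\Sigma_{\curved}$, \eqref{eq:maximum_C} for $\Sigma_{\vertical}$, and \eqref{eq:hat_F_along_C_airy} for $\mathcal C_{\glob}$), whereas you re-derive the second--derivative computation $h_\xi'$ directly; that is fine, and your treatment of $\mathcal C_{\glob}$ and of \eqref{eq:F_on_C_infty_edge} is correct.

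There is, however, a genuine inaccuracy in your $\Sigma_{\glob}$ argument. You assert that $\Im z_+$ is bounded below by a positive constant and that $\Re\hat F(w;x_*)-\hat F(z_0;x_*)\ge c>0$ uniformly on $\Sigma_{\curved}$ and at $z_+$. Neither holds: by construction $z_+$ is the point of $\tilde\Sigma$ with $\Re z_+=z_0-\tfrac12 c_1 n^{-3/10}$, so $z_+\to z_0$; in fact $|z_+-z_0|\sim n^{-3/20}$ and the gap at $z_+$ is $o(1)$. The clean fix is already latent in your own computation: since $\Im\hat F_z(w;a)=(M+1)\arg w-\arg(w-1)$ is independent of $a$, the unique positive zero $\tau_{**}$ of $g_{\xi'}'$ is exactly $\Im w_+(\varphi')=\Im z_+$. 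Hence $\Sigma_{\vertical}\cap\compC_+$ runs from $\tau_{\mathrm{in}}$ up to $\tau_{**}$ itself, so $g_{\xi'}$ is increasing on \emph{all} of $\Sigma_{\vertical}$ and its minimum is at the inner endpoint only, where your cubic evaluation gives $+\tfrac13 n^{-9/10}+O(n^{-6/5})$. The value at $z_+$ then automatically exceeds this, and by \eqref{eq:hat_F_along_Sigma_Airy} (monotonicity of $\Re\hat F$ along $\tilde\Sigma$ away from $z_0$) the rest of $\Sigma_{\curved}$ only increases further. This is precisely the paper's argument, and it dispenses with the constant-gap hypothesis you tried to impose.
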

The proof of this lemma is postponed to Section \ref{subsec:proof_lem_edge_ineq}.

The strategy now is first to consider the double contour integral in \eqref{eq:double_contour_for_Airy} with $\mathcal{C}$ and $\Sigma$ restricted to $\mathcal{C}_{\local}$ and $\Sigma_{\local}$, respectively. It turns out that the integral with the restricted domain yields the Airy kernel in the large $n$ limit. Later we show that the remaining part of the integral is negligible in the asymptotic analysis.

For $s \in \mathcal{C}_{\local}$ and $t \in \Sigma_{\local}$, we can approximate $F(s; n^M x_\ast)$ and $F(t; n^M x_\ast)$ by $\tilde{F}$ as in \eqref{eq:F_in_F_tilde} and furthermore by $\hat{F}$ that is defined in \eqref{eq:hat_F}. By making the change of variables
\begin{equation} \label{eq:parametrization_Airy}
  s = nz_0 + n^{\frac{2}{3}} c_1 u, \quad t = nz_0 + n^{\frac{2}{3}} c_1 v,
\end{equation}
it follows that
\begin{align} \label{eq:F_tilde_in_F_hat}
  F(s; n^M x_\ast) ={}& \tilde{F}(s; n^M x_\ast) + c_M+\bigO(n^{-\frac{1}{3}})  \nonumber \\
={}&n \hat{F}(z_0 + n^{-\frac{1}{3}}c_1u; x_\ast) + n\log n +c_M+ \bigO(n^{-\frac{1}{3}}),
\end{align}
where
\begin{equation}
  c_M=\sum_{j=1}^M\left(\nu_j+\frac{1}{2}\right) \log\left (\frac{n(M+1)}{M}\right) + \frac{1}{2}\log\left(M+1\right)\\
  + \frac{M}{2} \log(2\pi).
\end{equation}

Straightforward calculations show that
\begin{equation}
  \hat{F}_z(z_0; x_\ast) = 0, \quad \hat{F}_{zz}(z_0; x_\ast) = 0, \quad \hat{F}_{zzz}(z_0; x_\ast) = \frac{M^3}{M + 1}.
\end{equation}
We then obtain from Taylor's expansion of \eqref{eq:hat_F} that
\begin{align} \label{eq:F_hat_Taylor_expansion}
    & \hat{F}(z_0 + n^{-\frac{1}{3}}c_1u; x_\ast )  \nonumber \\
    = {}& \hat{F}(z_0; x_\ast) + \hat{F}_z(z_0; x_\ast)c_1u n^{-\frac{1}{3}} + \frac{1}{2} \hat{F}_{zz}(z_0; x_\ast)c^2_1u^2 n^{-\frac{2}{3}} + \frac{1}{6} \hat{F}_{zzz}(z_0; x_\ast)c^3_1u^3 n^{-1} + \bigO\left(n^{-\frac{6}{5}}\right)  \nonumber \\
    = {}& \hat{F}(z_0; x_\ast) + \frac{u^3}{3n} + \bigO\left(n^{-\frac{6}{5}}\right),
\end{align}
uniformly valid for $u \in D_{n^{1/30}}(0)$. We also note that, by \eqref{def:c1}, \eqref{eq:parametrization_Airy} and \eqref{def:z0},
\begin{equation} \label{eq:simpler_Taylor_Airy}
  \left(1 +n^{-\frac{2}{3}} c^{-1}_1 \eta\right)^{-s} = e^{- 2^{-\frac{1}{3}} (M + 1)^{\frac{2}{3}} \eta n^{\frac{1}{3}}} e^{-u\eta}\left (1 + \bigO\left(n^{-\frac{1}{3}}\right)\right),
\end{equation}
for all $u \in D_{n^{1/30}}(0)$ and $\eta$ in a compact subset of $\mathbb{R}$.
Combining \eqref{eq:F_tilde_in_F_hat}, \eqref{eq:F_hat_Taylor_expansion} and \eqref{eq:simpler_Taylor_Airy}, we find
\begin{equation} \label{eq:asy_F(s;y)}
  e^{F(s; n^M x_\ast)}\left(1 + n^{-\frac{2}{3}} c^{-1}_1 \eta\right)^{-s} = n^ne^{c_M+\hat{F}(z_0; x_\ast)n}e^{\frac{1}{3}u^3 - u\eta} e^{-2^{-\frac{1}{3}}(M + 1)^{\frac{2}{3}} \eta n^{\frac{1}{3}}} \left (1 + \bigO\left(n^{-\frac{1}{5}}\right)
  \right),
\end{equation}
uniformly for  $s \in \mathcal{C}_{\local}$ and $\eta$ in a compact subset of $\mathbb{R}$. Similarly, if $x$ and $t$ are expressed respectively by $\xi$ and $v$ via \eqref{eq:scaling_of_xy_Airy} and \eqref{eq:parametrization_Airy}, where $\xi$ belongs to a compact subset of $\mathbb{R}$ and $t \in \Sigma_{\local}$, we have that uniformly in $t$ and $\xi$
\begin{equation} \label{eq:asy_F(t;x)}
  e^{F(t; n^M x_\ast)}\left(1 + n^{-\frac{2}{3}} c^{-1}_1 \xi \right)^{-s} = n^ne^{c_M+\hat{F}(z_0; x_\ast)n}e^{\frac{1}{3}v^3 - v\xi} e^{-2^{-\frac{1}{3}}(M + 1)^{\frac{2}{3}} \xi n^{\frac{1}{3}}} \left (1 + \bigO\left(n^{-\frac{1}{5}}\right)
  \right).
\end{equation}

Substituting \eqref{eq:asy_F(s;y)} and \eqref{eq:asy_F(t;x)} into the integrand of \eqref{eq:double_contour_for_Airy}, we have
\begin{align} \label{eq:local_asy_analysis_Airy}
  & \frac{y^{-1}}{(2\pi i)^2} \int_{\mathcal{C}_{\local}} \ud s \oint_{\Sigma_{\local}}  \ud t \frac{e^{F(s; n^M x_\ast)}\left (1 + n^{-\frac{2}{3}} c^{-1}_1 \eta\right)^{-s}}{e^{F(t; n^M x_\ast)} \left(1 + n^{-\frac{2}{3}} c^{-1}_1 \xi\right)^{-t}} \frac{1}{s - t} \nonumber \\
   = {}& \frac{e^{2^{-\frac{1}{3}}(M + 1)^{\frac{2}{3}} (\xi - \eta) n^{\frac{1}{3}}}}{n^{M - \frac{2}{3}} x_\ast c^{-1}_1} \left( \frac{1}{(2\pi i)^2} \int_{\mathcal{C}_{r}} \ud u \int_{\Sigma_{r}} \ud v \frac{e^{ \frac{1}{3}u^3 - u\eta}}{e^{\frac{1}{3}v^3 - v\xi}} \frac{1}{u - v} + \bigO\left(n^{-\frac{1}{5}}\right) \right) \nonumber \\
    = {}& \frac{e^{2^{-\frac{1}{3}}(M + 1)^{\frac{2}{3}} (\xi - \eta) n^{\frac{1}{3}}}}{n^{M - \frac{2}{3}}c_2} \left( K_{\Ai}(\xi, \eta) + \bigO\left(n^{-\frac{1}{5}}\right) \right),
 \end{align}
where  $\Sigma_r$ and $\mathcal{C}_r$ are the images of $\mathcal{C}_{\local}$ and $\Sigma_{\local}$ (see \eqref{eq:clocal} and \eqref{eq:sigmlocal}) under the change of variables \eqref{eq:parametrization_Airy} (see Figure \ref{fig:Airy_contours} for an illustration), and the last equality follows from the integral representation of Airy kernel shown in \eqref{def:airy kernel}.

\begin{figure}[h]
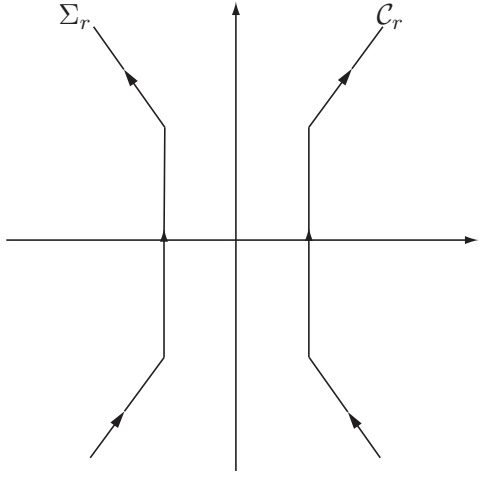

  \centering
  \begin{overpic}
    [scale=.6]{Airy_curve.eps}
    \put(11,95){$\Sigma_r$} \put(78,95){$\mathcal{C}_r$}
  \end{overpic}
  \caption{The contours $\Sigma_r$ and $\mathcal{C}_r$ in \eqref{eq:local_asy_analysis_Airy}}
  \label{fig:Airy_contours}
\end{figure}

In a manner similar to \eqref{eq:simpler_Taylor_Airy}, we find that
\begin{multline} \label{eq:peripherial_Airy}
  e^{2^{-\frac{1}{3}}(M + 1)^{\frac{2}{3}} (\eta - \xi) n^{\frac{1}{3}}} \frac{e^{F(s; n^M x_\ast)} \left(1 + n^{-\frac{2}{3}} c^{-1}_1 \eta\right)^{-s}}{e^{F(t; n^M x_\ast)} \left(1 + n^{-\frac{2}{3}} c^{-1}_1 \xi\right)^{-t}} \\= \frac{e^{F(s; n^M x_\ast)} \left(1 + n^{-\frac{2}{3}} c^{-1}_1 \eta\right)^{-(s - nz_0)}}{e^{F(t; n^M x_\ast)} \left(1 + n^{-\frac{2}{3}} c^{-1}_1 \xi\right)^{-(t - nz_0)}} \left(1 + \bigO\left(n^{-\frac{1}{3}}\right)\right).
\end{multline}
Then as a consequence of Lemma \ref{lem:edge_ineq}, there exists a constant $\delta > 0$ such that for $n$ large enough
\begin{align}
  \left\lvert e^{-F\left(t; n^Mx_\ast\right)} \left(1 + n^{-\frac{2}{3}} c^{-1}_1 \xi\right)^{(t - nz_0)} \right\rvert < {}& \left \lvert e^{-F\left(nz_0; n^Mx_\ast\right)} \right\rvert e^{-\delta n^{\frac{1}{10}}} \quad \text{if $t \in \Sigma_{\glob}$}, \label{eq:peripherial_Airy_est_1} \\
  \left\lvert e^{F(s; n^Mx_\ast)} (1 + n^{-\frac{2}{3}} c^{-1}_1 \eta)^{-(s - nz_0)} \right\rvert < {}&
  \begin{cases}
    \left\lvert e^{F(nz_0; n^Mx_\ast)} \right\rvert e^{-\delta n^{\frac{1}{10}}} & \text{if $s \in \mathcal{C}_{\glob}$}, \vspace{1mm} \\
    \left\lvert e^{F(nz_0; n^Mx_\ast)} \right\rvert e^{-\delta \lvert s \rvert} & \text{if $s \in \mathcal{C}_{\glob}\cap \{ \lvert s \rvert > \frac{\delta}{n} \}$}.
  \end{cases} \label{eq:peripherial_Airy_est_2}
\end{align}
We conclude by \eqref{eq:peripherial_Airy}, \eqref{eq:peripherial_Airy_est_1}, \eqref{eq:peripherial_Airy_est_2}, and the asymptotics of the integrand of \eqref{eq:double_contour_for_Airy} given in \eqref{eq:asy_F(s;y)} and \eqref{eq:asy_F(t;x)} that
\begin{multline} \label{eq:contral_of_outer_integral}
  \frac{y^{-1}}{(2\pi i)^2} \int_\mathcal{C} \ud s \oint_{\Sigma}  \ud t \frac{e^{F(s; x_\ast)} (1 + n^{-\frac{2}{3}} c^{-1}_1 \eta)^{-s}}{e^{F(t; x_\ast)} (1 + n^{-\frac{2}{3}} c^{-1}_1 \xi)^{-t}} \frac{1}{s - t} \\
  - \frac{y^{-1}}{(2\pi i)^2} \int_{\mathcal{C}_{\local}} \ud s \oint_{\Sigma_{\local}}  \ud t \frac{e^{F(s; x_\ast)} (1 + n^{-\frac{2}{3}} c^{-1}_1 \eta)^{-s}}{e^{F(t; x_\ast)} (1 + n^{-\frac{2}{3}} c^{-1}_1 \xi)^{-t}} \frac{1}{s - t} = \bigO\left(e^{-\delta n^{\frac{1}{10}}}\right).
\end{multline}

A combination of the above formula and \eqref{eq:local_asy_analysis_Airy} gives us \eqref{eq:edge univer}, and completes the proof of Theorem \ref{thm:edge}.


\section{Contour constructions and proofs of Lemmas \ref{lem:bulk_ineq} and \ref{lem:edge_ineq}}\label{sec:ContConst}

In this section, we first construct two contours $\tilde{\mathcal{C}}_{x_0}$ and $\tilde{\Sigma}^{\epsilon}$, from which we can describe precisely the contours of the double integral \eqref{eq:deformed_contour} used in the proofs of our main theorems. The contour $\tilde{\mathcal{C}}_{x_0}$ depends on $x_0 \in (0, x_*)$, where $x_* = (M + 1)^{M + 1}/M^M$ is defined in \eqref{def:c2}. The other contour $\tilde{\Sigma}^{\epsilon}$ is dependent on a small parameter $\epsilon > 0$. Two technical lemmas regarding the behavior of the function $\Re \hat{F}$ on these two contours are then proved. With the aid of these two lemmas, we finally finish the proofs of Lemmas \ref{lem:bulk_ineq} and \ref{lem:edge_ineq} used in Sections \ref{subsec:proof_of_bulk_univ} and \ref{subsec:proof_edge}, respectively.

\subsection{Constructions of contours $\tilde{\mathcal{C}}_{x_0}$ and $\tilde{\Sigma}^{\epsilon}$} \label{subsec:contours_bulk}


Recall that for each $x_0 \in (0, x_*)$, which can be parametrized by $\varphi \in (0, \pi/(M + 1))$ as in \eqref{eq:para x}, we have two complex conjugate saddle points $w_\pm$ of $\hat{F}(z;x_0)$ defined in \eqref{eq:defn_w_pm}. The contour $\tilde{\mathcal{C}}_{x_0}$ is defined by
\begin{equation}
  \tilde{\mathcal{C}}_{x_0} := \{ z \in \compC \mid \Re z = \Re w_{+}= \Re w_{-}\},
\end{equation}
i.e., a vertical line passing through $\Re w_{\pm}$.

For the construction of $\tilde{\Sigma}^{\epsilon}$, we first define
\begin{equation}
  \tilde{\Sigma} := \tilde{\Sigma}_+ \cup \tilde{\Sigma}_-,
\end{equation}
where
\begin{equation}
  \tilde{\Sigma}_+ := \left\{ \zeta(\phi) ~\Big{|}~ \phi \in \left[0, \frac{\pi}{M + 1}\right] \right\}, \quad \tilde{\Sigma}_- :=\left \{ \overline{\zeta(\phi)} ~\Big{|}~ \phi \in\left [0, \frac{\pi}{M + 1}\right] \right \},
\end{equation}
with
\begin{equation} \label{eq:parametrisation_of_zeta}
  \zeta(\phi) = \frac{\sin((M + 1)\phi)}{\sin(M\phi)}e^{ i\phi}.
\end{equation}
It is easy to check that $\tilde{\Sigma}_{\pm}$ lies in $\compC_{\pm}$, passes through $w_\pm$, and intersects the real line only at $0$ when $\phi = \pi/(M + 1)$, and at $1 + M^{-1}$ when $\phi = 0$. Furthermore, as $\phi$ runs from $0$ to $\pi/(M + 1)$, the value of $|\zeta(\phi)|=\sin((M + 1)\phi)/\sin(M\phi)$ decreases, and as $\phi \to \pi/(M + 1)$ from the left,
\begin{equation}
  \zeta(\phi) = \frac{\pi - (M + 1)\phi}{\sin(\pi M/(M + 1))} e^{\frac{i\pi}{M + 1}} \left( 1 + \bigO \left( \frac{\pi}{M + 1} - \phi \right) \right).
\end{equation}
Thus, for small $\epsilon > 0$, the part of $\tilde{\Sigma}_{\pm}$ in the disc $D_{\epsilon}(0)$ is approximated by the line segments $\{ z = re^{\pm \pi i/(M + 1)} \mid r \leq \epsilon \}$. A plot of $\tilde{\Sigma}$ is shown in the left picture of Figure \ref{fig:SigmaTilde}. Our basic idea is to construct $\Sigma$ by $n\tilde{\Sigma}$. But the contour $\tilde{\Sigma}$ passes through the origin, which coincides with the poles of integrand in \eqref{eq:deformed_contour}, we need to make a small deformation of $\tilde{\Sigma}$ around the origin, which gives the following definition of $\tilde{\Sigma}^{\epsilon}$:
\begin{equation}
  \tilde{\Sigma}^{\epsilon} := \text{$\{ z \in \tilde{\Sigma} \mid \lvert z \rvert \geq \epsilon \}$} \cup \text{the arc of $\{ \lvert z \rvert = \epsilon \}$ connecting $\tilde{\Sigma} \cap \{ \lvert z \rvert = \epsilon \}$ and through $-\epsilon$},
\end{equation}
with counterclockwise orientation. It is clear that $\tilde{\Sigma}^{\epsilon}$ is a closed contour enclosing the interval $[0, 1]$; see the right picture of Figure \ref{fig:SigmaTilde} for an illustration.
\begin{figure}[ht]
  \begin{center}
    \resizebox*{7.5cm}{!}{\includegraphics{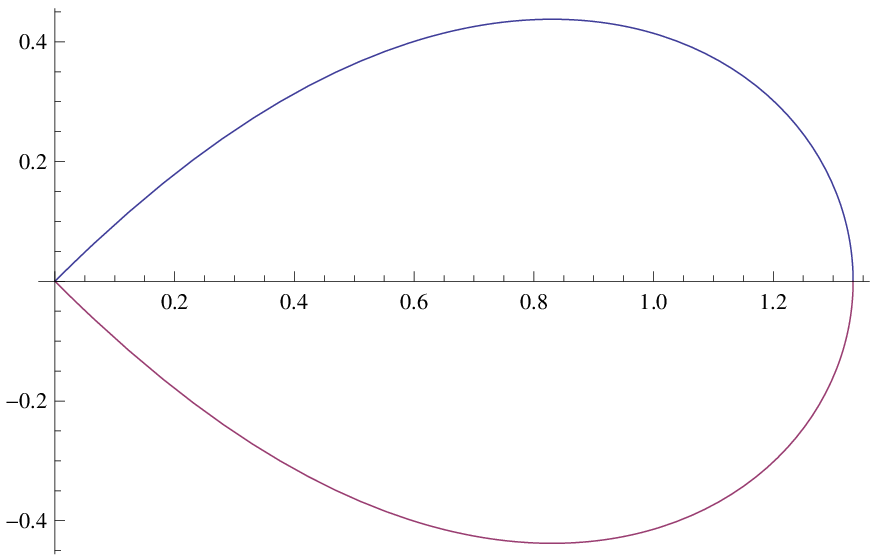}}
    \hspace{2mm}
    \resizebox*{7.5cm}{!}{\includegraphics{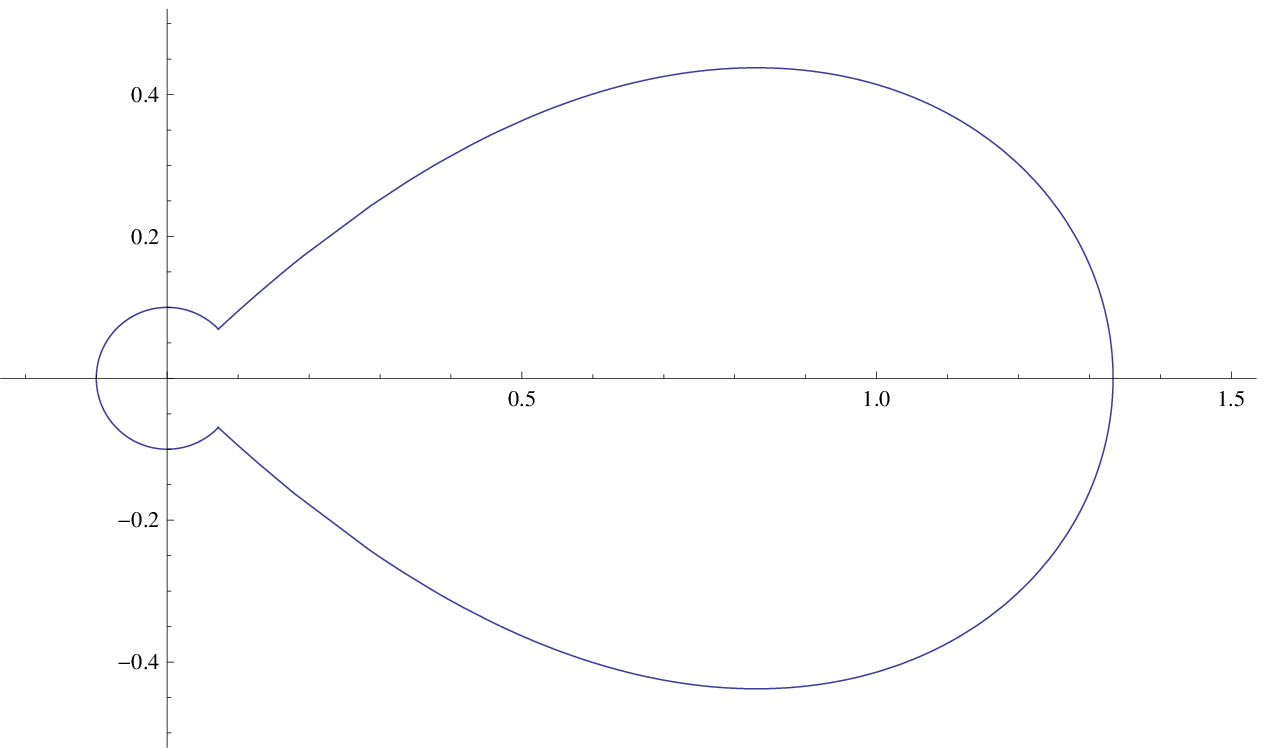}}
    \caption{The contours $\tilde{\Sigma}$ (left picture) and $\tilde{\Sigma}^{\epsilon}$ (right picture) with $M=3$ and $\epsilon=0.1$. }
\label{fig:SigmaTilde}
  \end{center}
\end{figure}

The next two lemmas give the behaviors of $\Re \hat{F}(z;a)$ (defined in \eqref{eq:hat_F}) on the contours $\tilde{\Sigma}$ and $\tilde{\mathcal{C}}_{x_0}$, which will be essential in our later proofs of Lemmas \ref{lem:bulk_ineq} and \ref{lem:edge_ineq}.
\begin{lem} \label{prop:bulk_contours_properties}
  For all $x_0 \in (0, x_*)$, which can be parameterized by $\varphi \in (0, \pi/(M + 1))$ as in \eqref{eq:para x}, there exist constants $\epsilon, \delta > 0$ such that
  \begin{equation} \label{eq:minimum_Sigma_local}
    \Re \hat{F}(z; x_0) \geq \Re \hat{F}(w_{\pm}; x_0) + \epsilon \left\lvert z - w_{\pm} \right\rvert^2 ~~ \text{for $z \in \tilde{\Sigma}^{\epsilon} \cap D_{\delta}(w_\pm)$.}
  \end{equation}
 Moreover, we have
  \begin{equation} \label{eq:minimum_Sigma}
  \begin{aligned}
    \frac{\ud}{\ud\phi} \Re \hat{F}(\zeta(\phi); x_0)
    &\begin{cases}
      < 0 & \text{for $\phi \in (0, \varphi)$}, \\
      > 0 & \text{for $\phi \in \left(\varphi, \frac{\pi}{M + 1}\right)$},
    \end{cases}
   \\
    \frac{\ud}{\ud\phi} \Re \hat{F}(\overline{\zeta(\phi)}; x_0)
    &\begin{cases}
      < 0 & \text{for $\phi \in (0, \varphi)$}, \\
      > 0 & \text{for $\phi \in \left(\varphi, \frac{\pi}{M + 1}\right)$}.
    \end{cases}
    \end{aligned}
  \end{equation}
We also have
\begin{equation} \label{eq:hat_F_along_Sigma_Airy}
  \frac{\ud}{\ud\phi} \Re \hat{F}(\zeta(\phi); x_*) > 0 \quad \text{and} \quad \frac{\ud}{\ud\phi} \Re \hat{F}(\overline{\zeta(\phi)}; x_*) > 0 \quad \text{for $\phi \in \left(0, \frac{\pi}{M + 1}\right)$}.
\end{equation}
\end{lem}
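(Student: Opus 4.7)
My plan is to reduce the lemma to a few one-variable calculus statements on $(0, \pi/(M+1))$ by exploiting an algebraic identity satisfied by the parametrization $\zeta(\phi)$ in \eqref{eq:parametrisation_of_zeta}. Using $\sin((M+1)\phi) = \sin(M\phi)\cos\phi + \cos(M\phi)\sin\phi$, one verifies
\begin{equation*}
  \zeta(\phi) - 1 = \frac{\sin\phi}{\sin(M\phi)}\, e^{i(M+1)\phi}.
\end{equation*}
Combined with $\log \zeta(\phi) = \log \sin((M+1)\phi) - \log \sin(M\phi) + i\phi$ (principal branches are valid since $\Re \zeta(\phi) > 0$ and $\arg(\zeta(\phi) - 1) = (M+1)\phi \in (0, \pi)$), the imaginary parts cancel when computing $\hat{F}_z$, yielding
\begin{equation*}
  \hat{F}_z(\zeta(\phi); x_0) = g(\phi) - g(\varphi),
\end{equation*}
where $g(\phi) := (M+1)\log \sin((M+1)\phi) - M \log \sin(M\phi) - \log \sin\phi$ and the equality $g(\varphi) = \log x_0$ is exactly \eqref{eq:para x}. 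Hence $\hat{F}_z(\zeta(\phi); x_0)$ is real-valued along $\tilde{\Sigma}_+$.

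Using this reality, the chain rule simplifies to
\begin{equation*}
  \frac{d}{d\phi}\Re\hat{F}(\zeta(\phi); x_0) = [g(\phi) - g(\varphi)] \cdot \Re \zeta'(\phi),
\end{equation*}
and I would address the sign of each factor separately. The function $g$ is strictly decreasing on $(0, \pi/(M+1))$ because $x = e^{g(\varphi)}$ is strictly decreasing, as remarked after \eqref{eq:para x}; hence $g(\phi) - g(\varphi)$ changes sign precisely at $\phi = \varphi$. For the other factor, the product-to-sum identity gives $\Re \zeta(\phi) = \sin((M+2)\phi)/(2\sin(M\phi)) + 1/2$, so the desired bound $\Re \zeta'(\phi) < 0$ reduces to the trigonometric inequality
\begin{equation*}
  h(\psi) := \sin((M+1)\psi) - (M+1)\sin\psi < 0, \qquad \psi \in (0, 2\pi/(M+1)),
\end{equation*}
which I verify by analyzing the unique sign change of $h'(\psi) = -2(M+1)\sin((M+2)\psi/2)\sin(M\psi/2)$ at $\psi = 2\pi/(M+2)$ together with the boundary values $h(0) = 0$ and $h(2\pi/(M+1)) = -(M+1)\sin(2\pi/(M+1))$. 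Multiplying the two sign patterns yields \eqref{eq:minimum_Sigma} on $\tilde{\Sigma}_+$; Schwarz reflection ($\hat{F}(\bar z; x_0) = \overline{\hat{F}(z; x_0)}$, as $\hat{F}$ is real on $(0, \infty)$) transfers the conclusion to $\tilde{\Sigma}_-$.

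For the local quadratic bound \eqref{eq:minimum_Sigma_local}, I note that near $w_\pm$ the contour $\tilde{\Sigma}^{\epsilon}$ coincides with $\tilde{\Sigma}_\pm$ once $\delta < |w_\pm|$. Differentiating once more at $\phi = \varphi$ gives
\begin{equation*}
  \left.\frac{d^2}{d\phi^2}\Re\hat{F}(\zeta(\phi); x_0)\right|_{\phi = \varphi} = g'(\varphi)\, \Re\zeta'(\varphi) > 0,
\end{equation*}
a product of two strictly negative quantities; together with $|\zeta(\phi) - w_+| = |\zeta'(\varphi)||\phi - \varphi|(1 + o(1))$, Taylor expansion gives the quadratic lower bound after possibly shrinking $\delta$. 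The Airy-regime statement \eqref{eq:hat_F_along_Sigma_Airy} is the boundary case $\varphi \to 0^+$: continuity gives $g(0^+) = (M+1)\log(M+1) - M\log M = \log x_\ast$, so $g(\phi) - \log x_\ast < 0$ throughout $(0, \pi/(M+1))$ by the monotonicity of $g$, and multiplying by $\Re\zeta'(\phi) < 0$ yields a strictly positive derivative.

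The main technical obstacle is the inequality $h(\psi) < 0$ on $(0, 2\pi/(M+1))$: since $h$ is non-monotone (first decreasing, then increasing on this range), the argument must bound the interior minimum at $\psi = 2\pi/(M+2)$ and handle the marginal case $M = 1$ where $h(\pi) = 0$. Everything else reduces to the single algebraic identity for $\zeta(\phi) - 1$ and elementary one-variable calculus.
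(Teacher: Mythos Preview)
Your proof is correct and follows essentially the same route as the paper: both compute $\hat{F}_z(\zeta(\phi);x_0)=\log\bigl(v(\phi)/v(\varphi)\bigr)$ (your $g(\phi)-g(\varphi)$, with $g=\log v$) via the algebraic identity $\zeta(\phi)^{M+1}=v(\phi)(\zeta(\phi)-1)$, then multiply the signs of the two factors $g(\phi)-g(\varphi)$ and $\Re\zeta'(\phi)$, and finally obtain the local quadratic bound from the strictly positive second derivative $g'(\varphi)\,\Re\zeta'(\varphi)$ at $\phi=\varphi$. The only cosmetic difference is in showing $\Re\zeta'(\phi)<0$: the paper observes that $\Re\zeta(\phi)=\dfrac{\sin((M+1)\phi)}{\sin(M\phi)}\cos\phi$ is a product of two positive decreasing functions (using the monotonicity of $\sin\theta/\sin(c\theta)$ for $0<c<1$), whereas you rewrite it as $\tfrac12\sin((M+2)\phi)/\sin(M\phi)+\tfrac12$ and analyze $h(\psi)=\sin((M+1)\psi)-(M+1)\sin\psi$; the paper's argument is shorter and avoids the case split at $M=1$ (where in fact $h(\psi)=2\sin\psi(\cos\psi-1)<0$ directly, so no separate bound on the interior minimum is needed).
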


\begin{proof}
  Due to the symmetry of $\Re \hat{F}(z; a)$ with respect to the real axis, it suffices to consider the case that $z \in \compC_+$, that is, only the inequalities involving in $\zeta(\phi)$.

  To show \eqref{eq:minimum_Sigma_local} and \eqref{eq:minimum_Sigma}, we define
  \begin{equation}\label{def:vphi}
    v(\phi) = \frac{(\sin((M+1)\phi))^{M+1}}{\sin\phi(\sin(M\phi))^M}.
  \end{equation}
  Note that $x_0 = v(\varphi)$ and for all $\phi \in [0, \pi/(M + 1)]$,
  \begin{equation}
    \zeta(\phi)^{M + 1} - v(\phi)(\zeta(\phi) - 1) = 0,
  \end{equation}
  where $\zeta(\phi)$ is given in \eqref{eq:parametrisation_of_zeta}. Thus,
  \begin{equation}
    \frac{\ud \hat{F}(\zeta(\phi); x_0)}{\ud \phi} = \frac{\ud \hat{F}}{\ud \zeta} \frac{\ud \zeta(\phi)}{\ud \phi} = \log\left(\frac{\zeta(\phi)^{M+1}}{(\zeta(\phi) - 1)x_0}\right)\frac{\ud \zeta(\phi)}{\ud \phi} = \log\left(\frac{v(\phi)}{v(\varphi)}\right) \frac{\ud \zeta(\phi)}{\ud \phi}.
  \end{equation}
  Since the function $\sin \theta/\sin(c\theta)$ is a strictly decreasing function on $(0,\pi)$ for $0<c<1$, it is readily seen from \eqref{eq:parametrisation_of_zeta} and \eqref{def:vphi} that
  \begin{equation}
  v(\phi) > 0, \quad  \frac{\ud v(\phi)}{\ud \phi} < 0, \quad \Re \frac{\ud \zeta(\phi)}{\ud \phi}=\frac{\ud}{\ud \phi}\left(\frac{\sin((M + 1)\phi)\cos \phi}{\sin(M\phi)}\right) < 0,
  \end{equation}
  for all $\phi \in \left(0, \frac{\pi}{M + 1}\right)$. Hence
\begin{equation} \label{eq:hat_F_Sigma_bulk_first_derivative}
  \frac{\ud \Re \hat{F}(\zeta(\phi); x_0)}{\ud \phi} = \log\left(\frac{v(\phi)}{v(\varphi)}\right) \frac{\ud \Re \zeta(\phi)}{\ud \phi}
  \begin{cases}
    < 0 & \text{if $\phi \in (0, \varphi)$}, \\
    > 0 & \text{if $\phi \in \left(\varphi, \frac{\pi}{M + 1}\right)$},
  \end{cases}
\end{equation}
and
\begin{equation} \label{eq:hat_F_Sigma_bulk_second_derivative}
  \left. \frac{\ud^2 \Re \hat{F}(\zeta(\phi); x_0)}{\ud \phi^2} \right\rvert_{\phi = \varphi} = \left. \frac{\ud}{\ud \phi} \left( \log\left(\frac{v(\phi)}{v(\varphi)}\right) \frac{\ud \Re \zeta(\phi)}{\ud \phi} \right) \right\rvert_{\phi = \varphi} = \left. \frac{\ud \Re \zeta(\phi)}{\ud \phi}\right\rvert_{\phi = \varphi}\frac{v'(\varphi)}{v(\varphi)}  > 0,
\end{equation}
which gives us \eqref{eq:minimum_Sigma_local} and \eqref{eq:minimum_Sigma} for $z$ (or $\zeta(\phi)$)$\in \compC_+$.

Finally, note that the inequality \eqref{eq:hat_F_along_Sigma_Airy} is the limiting case of \eqref{eq:minimum_Sigma} as $x_0 \to x_*$, or equivalently, $\varphi \to 0$, the result is then immediate.
\end{proof}

\begin{lem} \label{lem:for_C}
  For all conjugate pairs $w_\pm \in \compC_\pm$ locating on $\tilde{\Sigma}$, there exists constants $\epsilon, \delta > 0$ such that for all $a \in \realR$
  \begin{equation} \label{eq:maximum_C_local}
    \Re \hat{F}(\Re w_{\pm} + iy; a) \leq \Re \hat{F}(w_{\pm}; a) - \epsilon \left\lvert y - \Im w_{\pm} \right\rvert^2 \quad \text{for $\lvert y - \Im w_{\pm} \rvert \leq \delta$}.
  \end{equation}
  Moreover, we have
  \begin{equation}
  \begin{aligned}
  \label{eq:maximum_C}
    \frac{\ud}{\ud y} \Re \hat{F}( \Re w_{\pm} + iy; a)
    \begin{cases}
      < 0 & \text{if $y > \Im w_+$}, \\
      > 0 & \text{if $y \in \left(0, \Im w_+\right)$}, \\
      < 0 & \text{if $y \in (\Im w_-, 0)$}, \\
      > 0 & \text{if $y < \Im w_-$},
    \end{cases}
    \\
    \lim_{y \to \pm \infty} \frac{\ud}{\ud y} \Re \hat{F}( \Re w_{\pm} + iy; a) = \mp \infty.
    \end{aligned}
  \end{equation}
  We also have, for all $c > 0$,
  \begin{equation} \label{eq:hat_F_along_C_airy}
  \begin{aligned}
    \frac{\ud}{\ud y} \Re \hat{F}(1 + M^{-1} + c + iy; a)
     \begin{cases}
      < 0 & \text{if $y > 0$}, \\
      > 0 & \text{if $y < 0$}, \\
    \end{cases}
    \\
    \lim_{y \to \pm\infty} \frac{\ud}{\ud y} \Re \hat{F}(1 + M^{-1} + c + iy; a)  = \mp \infty.
    \end{aligned}
  \end{equation}
\end{lem}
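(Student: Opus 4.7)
The plan is to reduce both the local and global assertions to a one-variable analysis of
\[
h(y) := \frac{\ud}{\ud y}\Re \hat{F}(x_0 + iy; a), \qquad x_0 := \Re w_+,
\]
and to exploit the fact that $h$ is independent of $a$. Indeed, the $a$-dependence of $\hat{F}(z;a)$ enters only through $-z\log a$, whose real part along the vertical line $\Re z = x_0$ equals the $y$-independent constant $-x_0\log a$, so each of \eqref{eq:maximum_C_local}, \eqref{eq:maximum_C} and \eqref{eq:hat_F_along_C_airy} may be verified for any single convenient $a$. I will take $a = v(\varphi)$, where $\varphi\in(0,\pi/(M+1))$ is the unique parameter satisfying $w_+ = \zeta(\varphi)$; this choice makes $w_+$ a saddle of $\hat{F}(\cdot;v(\varphi))$. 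The Cauchy-Riemann equations then give
\[
h(y) = -\Im \hat{F}'(x_0 + iy; v(\varphi)) = -(M+1)\arg(x_0 + iy) + \arg(x_0 - 1 + iy),
\]
with the principal branch continued from above on $(-\infty, 0)$, and the saddle condition reads $h(\Im w_+) = 0$.

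For the local estimate \eqref{eq:maximum_C_local} it suffices to show $h'(\Im w_+) < 0$, that is, $\Re \hat{F}''(w_+;v(\varphi)) > 0$. Starting from the closed form \eqref{eq:second_derivative_bulk} together with the identity $w_+ - 1 = \sin\varphi\, e^{i(M+1)\varphi}/\sin(M\varphi)$ (a direct consequence of \eqref{eq:defn_w_pm}), I will reduce the required positivity to the trigonometric inequality $(M+1)\sin(2\varphi) - \sin(2(M+1)\varphi) > 0$ on $(0, \pi/(M+1))$, which follows from the Chebyshev-type bound $|\sin((M+1)\theta)| \le (M+1)|\sin\theta|$ applied separately on $\theta \in (0, \pi/(M+1))$ and $\theta \in (\pi/(M+1), 2\pi/(M+1))$, after substituting $\theta = 2\varphi$ and checking signs.

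For the global statement \eqref{eq:maximum_C} I compute
\[
h'(y) = -\frac{M x_0 (x_0 - 1)(x_0 - z_0) + (M x_0 + 1)y^2}{(x_0^2 + y^2)\bigl((x_0 - 1)^2 + y^2\bigr)}, \qquad z_0 = 1 + \tfrac{1}{M},
\]
and use the monotonicity of $\Re\zeta$ from Lemma \ref{prop:bulk_contours_properties} to deduce $x_0 \in (0, z_0)$. This splits the analysis into two cases. In the easy range $x_0 \in (0, 1]$, the first term of the numerator is nonnegative, so $h' < 0$ on $(0,\infty)$ and $h$ strictly decreases from the positive boundary value $h(0^+) \in \{\pi, \pi/2\}$ toward the finite limit $-M\pi/2$; the saddle condition identifies $\Im w_+$ as the unique sign change. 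In the delicate range $x_0 \in (1, z_0)$, the first term of the numerator is negative, so $h$ is unimodal (increasing on $(0, y_\ast)$ and decreasing on $(y_\ast, \infty)$ for an explicit $y_\ast>0$). Since $h(0^+) = 0$ in this regime, $h$ starts at $0$, rises to a positive maximum, and then descends to $-M\pi/2$; combining this shape with $h(\Im w_+) = 0$ pins $\Im w_+$ as the unique positive zero of $h$ and yields the claimed sign pattern. The assertion on $(-\infty,0)$ follows by conjugation.

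The auxiliary claim \eqref{eq:hat_F_along_C_airy} is the special case $x_0 = z_0 + c$ with $c > 0$: all three factors $x_0$, $x_0 - 1$, $x_0 - z_0$ are positive, so $h'(y) < 0$ everywhere; oddness $h(-y) = -h(y)$ (unobstructed here because $x_0 > 1$) together with $h(0) = 0$ then gives the sign assertions at once. The main technical obstacle is case~(B) of the global argument, where the non-monotonicity of $h$ prevents a one-line conclusion and the precise boundary data $h(0)=0$ and $h \to -M\pi/2$ must be combined with unimodality to rule out spurious additional zeros. (The asymptotic limits $\lim_{y\to\pm\infty} h(y) = \mp M\pi/2$ are in fact finite; this finite negative/positive value is what is actually needed in the subsequent contour-integral bounds.)
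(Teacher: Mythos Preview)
Your approach is essentially the paper's: both compute $h'(y)=\frac{\partial^2}{\partial y^2}\Re\hat F(x_0+iy;a)$ explicitly (your formula agrees with the paper's \eqref{2prime} after expanding $Mx_0(x_0-1)(x_0-z_0)$), note its independence of $a$, and split into the cases $x_0\in(0,1]$ (strict concavity of $y\mapsto\Re\hat F$) and $x_0\in(1,z_0)$ (unimodality of $h$, combined with $h(0)=0$ and $h(\Im w_+)=0$). Two remarks. First, your separate trigonometric computation of $\Re\hat F''(w_+)$ is unnecessary: in case~A concavity gives $h'<0$ everywhere, and in case~B the unimodality together with $h(0)=0=h(\Im w_+)$ forces $\Im w_+>y_*$, hence $h'(\Im w_+)<0$; so the local estimate \eqref{eq:maximum_C_local} follows from the global analysis without extra work. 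Second, your observation that $\lim_{y\to\pm\infty}h(y)=\mp M\pi/2$ is \emph{finite} is correct and is a genuine correction to the lemma as stated; the paper writes $\mp\infty$, but only the (finite, nonzero) sign is used in deriving the linear decay bounds \eqref{eq:maximum_C_global_F} and \eqref{eq:F_on_C_infty_edge}.
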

\begin{proof}
  Similar to the proof of Lemma \ref{prop:bulk_contours_properties}, we need only to prove
   \eqref{eq:maximum_C_local}--\eqref{eq:hat_F_along_C_airy} for $y > 0$.

   By Cauchy-Riemann equations, it follows that, for any $x \in \realR$ and $y \in \realR_+$,
  \begin{equation} \label{primerelation}
    \begin{aligned}
    \frac{\partial}{\partial y}\Re \hat{F}(x + iy; a) &= -\Im \left. \frac{\ud}{\ud z}\hat{F}(z; a) \right\rvert_{z = x + iy}, \\
     \frac{\partial^{2}}{\partial y^{2}}\Re \hat{F}(x + iy; a) & = -\Re \left. \frac{\ud^2}{\ud z^2} \hat{F}(z; a) \right\rvert_{z = x + iy}.
    \end{aligned}
  \end{equation}
  Since
  \begin{equation}
    \hat{F}''(z;a)= \frac{M+1}{z}  -  \frac{1}{z-1},
  \end{equation}
  we have
  \begin{equation}  \label{2prime}
    \frac{\partial^{2}}{\partial y^{2}}\Re \hat{F}(x + iy;a) = \frac{(M+1-Mx)x(x-1)-(Mx+1)y^{2}}{(x^{2}+y^{2})((x-1)^{2}+y^{2})},
  \end{equation}
which is independent of the parameter $a$.

  To show \eqref{eq:maximum_C_local} and \eqref{eq:maximum_C}, we observe from \eqref{eq:defn_w_pm} that
  \begin{equation}\label{eq:range of Rew}
  \Re w_{\pm}=\frac{\sin((M + 1)\varphi)\cos \varphi}{\sin(M\varphi)}\in \left(0,\frac{M+1}{M}\right),\qquad \varphi \in \left(0, \frac{\pi}{M + 1}\right).\end{equation}
  In the case that $0 < \Re w_{\pm} \leq 1$, we have $M+1-M\Re w_{\pm} > 0$. This, together with \eqref{2prime}, implies that
  \begin{equation}\label{eq:second_deri_y}
    \frac{\partial^{2}}{\partial y^{2}}\Re \hat{F}(\Re w_{\pm} + iy; a) < 0 \quad \text{for all $y > 0$}
  \end{equation}
  and
  \begin{equation} \label{eq:limit_sedond_wrt_y}
    \lim_{y \to +\infty} \frac{\partial^{2}}{\partial y^{2}}\Re \hat{F}(\Re w_{\pm} +iy; a) = 0.
  \end{equation}
  Furthermore, since the parameter $a$ is assumed to be real, the value of $\frac{\partial}{\partial y} \Re \hat{F}(x + iy; a)$ does not depend on $a$. By \eqref{primerelation} and \eqref{eq:saddle bulk}, we have
  \begin{equation} \label{eq:vanishing_first_deriv}
    \left. \frac{\partial}{\partial y}\Re \hat{F}(\Re w_{\pm} + iy; a) \right\rvert_{y = \Im w_+} = -\Im \left. \frac{\ud}{\ud z}\hat{F}(z; a) \right\rvert_{z = w_+} = -\Im \left. \frac{\ud}{\ud z}\hat{F}(z; x_0) \right\rvert_{z = w_+} = 0,
  \end{equation}
  for any $a\in\mathbb{R}$. Thus, $\Re \hat{F}(\Re w_{\pm} + y; a)$, as a function of $y>0$, has a critical point at $\Im w_+$, and by \eqref{eq:second_deri_y}, is a concave function attaining its maximum at $\Im w_+$. We thus prove \eqref{eq:maximum_C_local} and \eqref{eq:maximum_C} in this case.

 In the case that $1< \Re w_{\pm} <(M+ 1)/M$, the equation $\frac{\partial^{2}}{\partial y^{2}}\Re \hat{F}(\Re w_{\pm} + iy;a)=0$ has a unique real root at
 \begin{equation}
   y^* = \sqrt{\frac{(M+1-M \Re w_{\pm}) \Re w_{\pm} (\Re w_{\pm} - 1)}{M\Re w_{\pm} + 1}},
 \end{equation}
for $y \in [0, \infty)$. Thus, $\frac{\partial}{\partial y}\Re \hat{F}(\Re w_{\pm} + iy; a)$ is strictly increasing if $y \in [0, y^*)$, and strictly decreasing if $y \in (y^*, \infty)$. Note that $\frac{\partial}{\partial y}\Re \hat{F}(\Re w_{\pm} + iy;a)$ is a continuous odd function in $y$, one has $\frac{\partial}{\partial y}\Re \hat{F}(\Re w_{\pm} + iy;a)\Big{|}_{y=0}=0$. Therefore,
\begin{equation}
  \frac{\partial}{\partial y}\Re \hat{F}(\Re w_{\pm} + iy;a) > 0, \qquad y\in(0, y^*).
\end{equation}
On the other hand, by \eqref{eq:vanishing_first_deriv}, we have that $ \frac{\partial}{\partial y}\Re \hat{F}(\Re w_{\pm} + iy; a)$ vanishes at $\Im w_+$. Thus we conclude that  $\Im w_+ \in (y^*, \infty)$, and have that on the interval $[y^*, \infty)$, the function $\Re \hat{F}(\Re w_{\pm} + iy; a)$ has a critical point at $\Im w_+$, and is a concave function with the maximum at $\Im w_+$. Note that \eqref{eq:limit_sedond_wrt_y} also holds in this case. We thus prove \eqref{eq:maximum_C_local} and \eqref{eq:maximum_C} in this case.

 We finally prove \eqref{eq:hat_F_along_C_airy}. By substituting $x = 1 + M^{-1} + c$ into \eqref{2prime}, we have
 \begin{equation}
   \frac{\partial^{2}}{\partial y^{2}}\Re \hat{F}(1 + M^{-1} + c + iy; a) < 0 \qquad \text{for all $y > 0$}.
 \end{equation}
 On the other hand, since $\frac{\partial}{\partial y}\Re \hat{F}(1 + M^{-1} + c + iy;a)$ is a continuous odd function in $y$, its value at $0$ is $0$. We conclude that $\frac{\partial}{\partial y}\Re \hat{F}(1 + M^{-1} + c + iy;a) < 0$ for all $y > 0$, and this gives us \eqref{eq:hat_F_along_C_airy}.
\end{proof}

In Sections \ref{subsec:proof_of_bulk_univ} and \ref{subsec:proof_edge}, the contours $\mathcal{C}$ and $\Sigma$ in \eqref{eq:deformed_contour} are constructed from $\tilde{\mathcal{C}}_{x_0}$ and $\tilde{\Sigma}^r$, where $r$ depends on $n$ and a small parameter. In the proofs of our main theorems, we need to estimate some integrals over specified contours, which relies on Lemmas \ref{lem:bulk_ineq} and \ref{lem:edge_ineq} concerning the inequalities satisfied by $\Re F$ over $\mathcal{C}$ and $\Sigma$. We are now ready to prove these two lemmas based on Lemmas \ref{prop:bulk_contours_properties} and \ref{lem:for_C}.

\subsection{Proof of Lemma \ref{lem:bulk_ineq}} \label{subsec:proof_lem_bulk_ineq}

For notational convenience, we shall write $F(z; n^M x_0)$ as $F(z)$ throughout this subsection.

\paragraph{Proof of \eqref{eq:minimum_Sigma_local_F} and \eqref{eq:minimum_Sigma_F}}
Recall the contour $\Sigma_{\curved}$ defined by \eqref{def:sigm_curved_bulk} and \eqref{eq:Sigma_contour_bulk_2}, we further write it as
\begin{equation}
  \Sigma_{\curved} = \Sigma_{\lef} \cup \Sigma_{\rig},
\end{equation}
where
\begin{equation}
  \begin{aligned}
    \Sigma_{\lef} := \{ z \in \Sigma_{\curved} \mid \lvert z \rvert = nr \}, \qquad
    \Sigma_{\rig} := \{ z \in \Sigma_{\curved} \mid \lvert z \rvert > nr \},
  \end{aligned}
\end{equation}
\ie, $\Sigma_{\lef}$ is part of a circle centring at $0$ with radius $nr$, and $\Sigma_{\rig}$ is the part of $\Sigma_{\curved}$ that does not overlap the circle.

If $z \in \Sigma_{\rig}$, it can be expressed as $z = n \zeta(\phi)$ or $z = n \overline{\zeta(\phi)}$ for some $\phi \in (0, \pi/(M + 1))$ by \eqref{eq:parametrisation_of_zeta}, so there exists a constant $\varepsilon' > 0$, such that for large enough $n$, $\arg (z + \nu_j + 1) \in (-\pi + \varepsilon', \pi - \varepsilon')$ and $\arg (z - n + 1) \in (-\pi + \varepsilon', \pi - \varepsilon')$. We then apply the Stirling's formula \eqref{eq:stirling} to $\Gamma(z + \nu_j + 1)$ and $\Gamma(z - n + 1)$ in formula \eqref{eq:defn_F}, and obtain a uniform approximation of $F(z)$ by $n\hat{F}(z/n;x_0)$, on account of \eqref{eq:stirling}--\eqref{eq:hat_F}. Thus, the inequalities \eqref{eq:minimum_Sigma_local} and \eqref{eq:minimum_Sigma} for $\Re \hat{F}(z; x_0)$ on $\tilde{\Sigma}$ yield the desired inequalities \eqref{eq:minimum_Sigma_local_F} and \eqref{eq:minimum_Sigma_F} for $z \in \Sigma_{\rig}$.

If $z \in \Sigma_{\lef}$, Stirling's formula \eqref{eq:stirling} may not be valid anymore, and we need to pay special attention. Note that there exists a constant $\varepsilon' > 0$, such that for all $n$ large enough, $\arg (-z - \nu_j) \in (-\pi + \varepsilon', \pi - \varepsilon')$ and $\arg (n - z) \in (-\pi + \varepsilon', \pi - \varepsilon')$. We make use of  the reflection formula of gamma function
\begin{equation} \label{eq:reflection_gamma}
  \Gamma(z) \Gamma(1 - z) = \frac{\pi}{\sin(\pi z)}
\end{equation}
to obtain a uniform approximation of $F(z)$. Since
\begin{equation}
 \sin(\pi(z - n + 1)) = \pm \sin(\pi z),\qquad \sin(\pi(z + \nu_j + 1)) = \pm \sin(\pi z),
\end{equation}
we have
\begin{equation} \label{eq:Re_F_left}
  \begin{split}
    \Re F(z) = {}& \Re \log \left( \frac{ \Gamma(n - z)}{\prod_{j=0}^M \Gamma(-z - \nu_j)} \frac{\pi^M \sin(\pi(z - n + 1))}{\prod^M_{j = 0} \sin(\pi(z + \nu_j + 1))} \left(n^M x_0\right)^{-z} \right) \\
    = {}& \Re \log \left( \frac{ \Gamma(n - z)}{\prod_{j=0}^M \Gamma(-z - \nu_j)} \left(n^M x_0\right)^{-z} \right) - M \log \frac{\lvert \sin(\pi z) \rvert}{\pi} \\
    = {}& \Re \tilde{G}\left(z; n^M x_0\right) - M \log \lvert 2\sin(\pi z) \rvert \\
    & + \sum_{j=0}^M\left(\nu_j+\frac{1}{2}\right) \log |z| - \frac{1}{2} \log|z - n| + \frac{M}{2} \log(2\pi) +\bigO\left(n^{-1}\right),
  \end{split}
\end{equation}
where
\begin{equation} \label{eq:tilde_G_left}
  \tilde{G}\left(z; n^M x_0\right) = (M + 1)z(\log(-z) - 1) - (z - n)(\log(n - z) - 1) - (M \log n + \log x_0) z.
\end{equation}
It is also straightforward to check that
\begin{equation} \label{eq:tilde_G_in_hat_G}
  \tilde{G}\left(z; n^M x_0\right) = n \hat{G}\left(\frac{z}{n}; x_0\right) + n\log n,
\end{equation}
where
\begin{equation} \label{eq:hat_G_defn}
  \hat{G}(\zeta; x_0) = (M + 1)\zeta(\log(-\zeta) - 1) - (\zeta - 1)(\log(1 - \zeta) - 1) - \zeta \log x_0.
\end{equation}

Formulas \eqref{eq:Re_F_left}--\eqref{eq:hat_G_defn} constitute a uniform approximation of $\Re F(z)$ for $z \in \Sigma_{\lef}$. Now we choose the parameter $\epsilon'$ in \eqref{eq:Sigma_contour_bulk_2} small enough such that $nw_\pm \in \Sigma_{\rig}$, thus $\Re F(nw_{\pm})$ can be approximated by \eqref{eq:F_in_F_tilde}--\eqref{eq:hat_F}. The
inequalities \eqref{eq:minimum_Sigma_local_F} and \eqref{eq:minimum_Sigma_F}
follow if we can show that there exists a constant $c > 0$ such that for all large enough $n$ and $z \in \Sigma_{\lef}$,
\begin{equation} \label{eq:Sigma_left_ineq_bulk}
  \Re \hat{G}\left(\frac{z}{n}; x_0\right) - \frac{M}{n} \log \lvert 2\sin(\pi z) \rvert > \Re \hat{F}(w_{\pm};x_0) + c.
\end{equation}
To prove \eqref{eq:Sigma_left_ineq_bulk}, we note that, by \eqref{eq:minimum_Sigma} in Lemma \ref{prop:bulk_contours_properties},
\begin{equation}
 \Re \hat{G}(0;x_0) = \Re \hat{F}(0;x_0) > \Re \hat{F}(w_{\pm};x_0),
\end{equation}
so we simply take
\begin{equation}
  c = \frac{1}{3}\left(\Re \hat{F}(w_{\pm};x_0) - \Re \hat{F}(0;x_0)\right).
\end{equation}
Since $\Re \hat{G}(\zeta;x_0)$ is a continuous function in the vicinity of $0$, we have that if $\epsilon$ is small enough, or equivalently, $r$ is small enough, $\lvert \Re \hat{G}(z/n;x_0) - \Re \hat{G}(0;x_0) \rvert < c$ for all $z \in \Sigma_{\lef}$. On the other hand, it is straightforward to check that if $\epsilon'$ is small enough, and $n$ is large enough, then $M n^{-1} \log \lvert 2\sin(\pi z) \rvert < c$ for all $z \in \Sigma_{\lef}$. Thus \eqref{eq:Sigma_left_ineq_bulk} holds if $\epsilon'$ is small enough while $n$ is large enough. This completes the proof of \eqref{eq:minimum_Sigma_local_F} and \eqref{eq:minimum_Sigma_F}.

\paragraph{Proof of \eqref{eq:maximum_C_local_F}--\eqref{eq:maximum_C_global_F}}

For any $x_0 \in (0, (M + 1)^{M + 1}/M^M)$, the associated complex conjugate numbers $w_\pm$ satisfying $\Re w_{\pm} \in (0, 1 + M^{-1})$; see \eqref{eq:range of Rew}. We prove the inequalities in three cases depending on the value of $\Re w_{\pm}$.

We first consider the case that $\Re w_{\pm} > 1$, or equivalently, the vertical contour $\mathcal{C}$ defined in \eqref{eq:C_contour_bulk} is on the right of $n$. Then for all $z \in \mathcal{C}$, there exists a constant $\varepsilon' > 0$ such that $\arg (z + \nu_j + 1) \in (-\pi + \varepsilon', \pi - \varepsilon')$ and $\arg (z - n + 1) \in (-\pi + \varepsilon', \pi - \varepsilon')$ for large $n$ enough. The formulas \eqref{eq:F_in_F_tilde}--\eqref{eq:hat_F} then give a uniform approximation of $F(z)$ by $n\hat{F}(z/n;x_0)$, similar to the case that $z \in \Sigma_{\rig}$ discussed previously. Hence, \eqref{eq:maximum_C_local_F}--\eqref{eq:maximum_C_global_F} are direct consequence of \eqref{eq:maximum_C_local} and \eqref{eq:maximum_C} in Lemma \ref{lem:for_C}.

In the case that $\Re w_{\pm} \in (0, 1)$, or equivalently, the vertical contour $\mathcal{C}$ lies between $0$ and $n$, we divide
\begin{equation} \label{eq:outer_inner_C}
  \mathcal{C} = \mathcal{C}_{\out} \cup \mathcal{C}_{\inner},
\end{equation}
where
\begin{equation}
 \mathcal{C}_{\out} = \{ z \in \mathcal{C} \mid \lvert \Im z \rvert > n \epsilon' \}, \quad \mathcal{C}_{\inner} = \{ z \in \mathcal{C} \mid \lvert \Im z \rvert \leq n\epsilon' \}
\end{equation}
and $\epsilon'$ is a small positive number.

For $z \in \mathcal{C}_{\out}$, we can still use the Stirling's formula directly and approximate $F(z)$ by $n\hat{F}(z/n;x_0)$ through \eqref{eq:F_in_F_tilde}--\eqref{eq:hat_F}. The desired inequalities \eqref{eq:maximum_C_local_F}--\eqref{eq:maximum_C_global_F} for such $z$ again follow from \eqref{eq:maximum_C_local} and \eqref{eq:maximum_C} in Lemma \ref{lem:for_C}.

For $z \in \mathcal{C}_{\inner}$, we encounter the problem of validity of Stirling's formula for $\Gamma(z - n + 1)$. With the aid of the reflection formula \eqref{eq:reflection_gamma}, for $n$ large enough, we obtain the following uniform approximation of $\Re F(z)$ given by
\begin{multline}
  \Re F(z) = \Re \tilde{H}(z; n^M x_0) + \log \lvert 2\sin(\pi z) \rvert \\
  + \sum_{j=0}^M\left(\nu_j+\frac{1}{2}\right) \log |z| - \frac{1}{2} \log|z - n| + \frac{M}{2} \log(2\pi) +\bigO\left(n^{-1}\right),
\end{multline}
where
\begin{equation}
  \begin{split}
    \tilde{H}\left(z; n^M x_0\right) = {}& (M + 1)z(\log z - 1) - (z - n)(\log(n - z) - 1) - (M \log n + \log x_0) z \\
    = {}& n \hat{H}\left(\frac{z}{n}; x_0 \right) + n\log n,
  \end{split}
\end{equation}
and
\begin{equation}
  \hat{H}(\zeta; x_0) = (M + 1)\zeta(\log \zeta - 1) - (\zeta - 1)(\log(1 - \zeta) - 1) - \zeta \log x_0.
\end{equation}
Similar to the discussions used in the proof of \eqref{eq:minimum_Sigma_local_F} and \eqref{eq:minimum_Sigma_F} with $z \in \Sigma_{\lef}$, we only need to show that for $z \in \mathcal{C}_{\inner}$, there exists a constant $c > 0$ such that
\begin{equation}
  \Re \hat{H}\left(\frac{z}{n}; x_0\right) + \frac{1}{n} \log \lvert 2\sin(\pi z) \rvert < \Re \hat{F}(w_{\pm};x_0) - c.
\end{equation}
Now we take
\begin{equation}
  c = \frac{1}{3}\left(\Re \hat{F}(w_{\pm};x_0) - \Re \hat{F}(\Re w_{\pm};x_0)\right),
\end{equation}
which is positive by \eqref{eq:maximum_C} in Lemma \ref{prop:bulk_contours_properties}. Since $\Re \hat{H}(\zeta;x_0)$ is continuous in the vicinity of $\Re w_{\pm}$, we have that $\lvert \Re \hat{H}(z/n;x_0) - \Re \hat{H}(\Re w_{\pm};x_0) \rvert < c$ for all $z \in \mathcal{C}_{\inner}$ if $\epsilon'$ is small enough. On the other hand, it is straightforward to check that if $\epsilon'$ is small enough and $n$ large enough, then $n^{-1} \log \lvert 2 \sin(\pi z) \rvert < c$ for all $z \in \mathcal{C}_{\inner}$. This gives us \eqref{eq:maximum_C_F} for $z \in \mathcal{C}_{\inner}$, and finishes the proof in this case.

Finally, if $\Re w_{\pm} = 1$, we still divide $\mathcal{C}$ into $\mathcal{C}_{\out}$ and $\mathcal{C}_{\inner}$ as in \eqref{eq:outer_inner_C}. The estimate of $\Re F(z)$ on $\mathcal{C}_{\out}$ can be derived from the Stirling's formula, but for $z \in \mathcal{C}_{\inner}$, we need to control the value of $\Gamma(z - n + 1)$ for $z - n = o(n)$. Since the strategy is similar, we omit the details here.

\subsection{Proof of Lemma \ref{lem:edge_ineq}} \label{subsec:proof_lem_edge_ineq}
For notational convenience, we shall write $F(z; n^M x_*)$ as $F(z)$ throughout this subsection.
\paragraph{Proof of \eqref{eq:F_on_Sigma_edge}}

For $z \in \Sigma_{\curved}$, the proof is parallel to that of \eqref{eq:minimum_Sigma_F}. The only difference is that after approximating $F(z)$ uniformly by $n\hat{F}(z/n; x_*)$ (defined in \eqref{eq:hat_F}) or by $n\hat{G}(z/n; x_*)$ (defined in \eqref{eq:hat_G_defn}, with $x_0$ replaced by $x_*$), depending on whether $\lvert z \rvert > nr$ or $\lvert z \rvert = nr$, we compare $\hat{F}(z/n; x_*)$ and $\hat{G}(z/n; x_*)$ with $\Re \hat{F}(1 + M^{-1}; x_*)$, instead of $\Re \hat{F}(w_{\pm};x_*)$ used in the proof of \eqref{eq:minimum_Sigma_F}. We then apply the inequality \eqref{eq:hat_F_along_Sigma_Airy}, instead of the inequality \eqref{eq:minimum_Sigma}, in the comparison. The details are left to the interested readers.

For $z \in \Sigma_{\vertical}$, we apply the approximation of $F(z)$ by $n\hat{F}(z/n; x_*)$ as in \eqref{eq:stirling}--\eqref{eq:hat_F}, and reduce the proof of \eqref{eq:F_on_Sigma_edge} for $z \in \Sigma_{\vertical}$ to proving
\begin{equation} \label{eq:ineq_Sigma_vertical}
  \Re \hat{F}\left(\frac{z}{n}; x_*\right) > \Re \hat{F}\left(1 + M^{-1}; x_*\right) + \delta n^{-\frac{9}{10}},\quad \delta>0.
\end{equation}
Without loss of generality, we show \eqref{eq:ineq_Sigma_vertical} only for $z \in \Sigma_{\vertical} \cap \compC_+$. By \eqref{eq:maximum_C} in Lemma \ref{lem:for_C}, $\Re \hat{F}(z/n; x_*)$ increases as $\Im z$ increases for $z \in \Sigma_{\vertical} \cap \compC_+$. So we only need to check that \eqref{eq:ineq_Sigma_vertical} holds for $z = nz_0 + c_1 n^{\frac{7}{10}} e^{2\pi i/3}$, i.e., the lowest end of $\Sigma_{\vertical} \cap \compC_+$. The explicit computation in \eqref{eq:F_hat_Taylor_expansion} gives the approximation of $\Re \hat{F}(z/n; x_*)$ at this point and finishes the proof in this case.

\paragraph{Proof of \eqref{eq:F_on_C_edge} and \eqref{eq:F_on_C_infty_edge}}

For all $z \in \mathcal{C}_{\glob}$, the uniform approximation of $F(z)$ by $n\hat{F}(z/n; x_*)$ as in \eqref{eq:stirling}--\eqref{eq:hat_F} is valid. Then we reduce \eqref{eq:F_on_C_edge} and \eqref{eq:F_on_C_infty_edge} to
\begin{align}
  \Re \hat{F}(z/n; x_*) < {}& \Re \hat{F}(z_0;x_*) - \delta n^{-\frac{9}{10}} & & \text{for $z \in \mathcal{C}_{\glob}$}, \label{eq:F_on_C_edge_approx} \\
  \Re \hat{F}(z/n; x_*) < {}& \Re \hat{F}(z_0;x_*) - \delta \lvert z \rvert/n & & \text{for $z \in \mathcal{C}_{\glob}$ and $\lvert z \rvert > \delta^{-1} n$}. \label{eq:F_on_C_infty_edge_approx}
\end{align}
The inequality \eqref{eq:F_on_C_infty_edge_approx} is a direct consequence of \eqref{eq:hat_F_along_C_airy}. To prove \eqref{eq:F_on_C_edge_approx} for $z\in \mathcal{C}_{\glob} \cap \compC_+$, we note that $\Re \hat{F}(z/n; x_*)$ decreases as $\Im z$ increases, as shown in \eqref{eq:hat_F_along_C_airy}. Thus we only need to check \eqref{eq:F_on_C_edge_approx} at $z = nz_0 + c_1 n^{\frac{7}{10}} e^{\pi i/3}$, the lowest end of $\mathcal{C}_{\glob} \cap \compC_+$. The explicit computation \eqref{eq:F_hat_Taylor_expansion} gives the approximation of $\Re \hat{F}(z/n; x_*)$ at this point and finishes the proof in this case. The inequality \eqref{eq:F_on_C_edge_approx} for $z\in\mathcal{C}_{\glob} \cap \compC_-$ can be proved in the same way.

\section{Bulk and soft edge universality in other product models}\label{sec:extensions}

As mentioned in Section \ref{subsec:aboutproof}, our strategy presented before is not restricted to the particular model, but applicable to other interesting models of products of random matrices. In this section, we demonstrate this aspect by establishing bulk and soft edge universality in two more examples with sketched proofs. One example is the product of $M$ Ginibre matrices and the inverse of $K$ Ginibre matrices studied by Forrester \cite{Forrester14}, and the other example is the product of $M-1$ Ginibre matrices with one truncated unitary matrix considered by Kuijlaars and Stivigny \cite{Kuijlaars-Stivigny14}. Our method can be applied to more cases, notably the newly analysed model in \cite{Forrester-Liu15} and models that can be expressed in the general double contour integral formalism in \cite{Claeys-Kuijlaars-Wang15}.

In this section, we use the same notations as in previous sections for objects in different models that have counterpart in the model introduced and computed in Sections \ref{sec:Intro}--\ref{sec:ContConst}. We hope these notations show the readers analogue in our arguments while do not bring confusion.

\subsection{Products of Ginibre matrices and their inverses}

This model refers to the product
\begin{equation} \label{Ymk}
Y_{M,K} = X_M   \cdots X_1(\tilde{X}_K   \cdots \tilde{X}_1)^{-1},
\end{equation}
where $X_j$, $j=1,\cdots,M$, and $\tilde{X}_k$, $k=1,\cdots,K$, are complex Ginibre random matrices with size $(n+\nu_j)\times (n+\nu_{j-1})$ and $(n+\tilde{\nu}_k)\times (n+\tilde{\nu}_{k-1})$, respectively. We assume that
\begin{equation}\label{eq:nu_tildnu}
\nu_0=\tilde{\nu}_0=\tilde{\nu}_K=0, \qquad \nu_j, \tilde{\nu}_k\geq 0,
\end{equation}
thus, $Y_{M,K}$ is a rectangular matrix of size $(n+\nu_M)\times n$. Clearly, $Y_{M,K}$ extends products of Ginibre matrices $Y_M$ defined in \eqref{Ym}.

It was shown in \cite[Propostion 5]{Forrester14} that the squared singular values of $Y_{M,K}$ forms a determinantal process with the correlation kernel
\begin{multline} \label{Knintegralinverse}
    K_n(x,y) =  \frac{1}{(2\pi i)^2} \int_{-1/2-i\infty}^{-1/2+i\infty} \ud s \oint_{\Sigma}  \ud t\\
        \prod_{j=0}^M   \frac{\Gamma(s+\nu_j+1)}{\Gamma(t+\nu_j+ 1)}\,  \prod_{k=1}^K   \frac{\Gamma(n-s+\tilde{\nu}_k)}{\Gamma(n-t+\tilde{\nu}_k)}
           \, \frac{\Gamma(t-n+1)}{\Gamma(s-n+1)}
        \frac{x^t y^{-s-1}}{s-t},
\end{multline}
where $\Sigma$ is a closed contour going around $0, 1, \ldots, n-1$ in the positive direction and  $\Re t > -1/2$ for $t \in \Sigma$.

The special case $(M,K)=(M,0)$ is the model we considered in Sections \ref{sec:Intro}--\ref{sec:ContConst}, while the special case $(M,K)=(0,K)$ is equivalent spectrally to the model $(K, 0)$ by reciprocal transform. In the generic case $K > 0, M>0$, the limiting mean density is supported over the whole positive real axis as $n\to\infty$ (see \cite{Forrester14,Forrester-Liu14}), which implies that no soft edge occurs. Below we only consider the $K > 0, M > 0$ case.

To state our result for bulk universality, we need the following parametrization of the spectral parameter $x_0$     \begin{equation}\label{eq:parainverse}
x_0=\frac{
\left(\sin \left( \frac{M+1}{K+1} \varphi+\frac{K  }{K+1}\pi \right)\right)^{M+1} }{\left(\sin\varphi\right)^{K+1} \,\left(\sin \left( \frac{M-K}{K+1} \varphi+\frac{K  }{K+1}\pi \right)\right)^{M-K}}, \quad 0<\varphi<\frac{\pi}{M+1},
\end{equation}
which is a one-to-one mapping from $(0,\pi/(M+1))$ to $(0,+\infty)$; see \cite{Forrester-Liu14,Haagerup-Moller13}.

\begin{thm}[Bulk universality]\label{thm:bulk_inverse}
Let $K_n(x,y)$ be the correlation kernel defined in \eqref{Knintegralinverse}. For $x_0\in\left(0, +\infty \right)$, which is parametrized by $\varphi \in (0, \pi/(M+1))$ through \eqref{eq:parainverse}, we have, with $\nu_j,\tilde{\nu}_k$ being fixed,
\begin{equation} \label{eq:bulk_inverse}
 \lim_{n \to \infty} \frac{e^{-\pi \xi\cot \varphi  }}{e^{-\pi \eta\cot \varphi   }} \frac{n^{M-K-1}}{\rho(\varphi)} K_n\left(n^{M-K}\left( x_0+ \frac{  \xi}{ n\rho(\varphi)}\right), n^{M-K}\left( x_0+ \frac{  \eta}{n\rho(\varphi) }\right)\right)  =K_{\sin}(\xi,\eta)
\end{equation}
uniformly for $\xi$ and $\eta$ in any compact subset of $\mathbb{R}$, where the function $\rho$ is given by
\begin{equation} \label{rhoinverse}
  \rho (\varphi) = \frac{1}{\pi x_0}\frac{\sin \left( \frac{M+1}{K+1} \varphi+\frac{K  }{K+1}\pi \right)  }{  \sin \left( \frac{M-K}{K+1} \varphi+\frac{K  }{K+1}\pi \right) }\,  \sin\varphi.
\end{equation}
\end{thm}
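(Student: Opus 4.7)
The strategy is to adapt verbatim the steepest descent proof of Theorem~\ref{thm:bulk}. First, I introduce the phase function
\begin{equation*}
F(z;a) := \log\left(\frac{\prod_{j=0}^M \Gamma(z+\nu_j+1)}{\Gamma(z-n+1)}\,\prod_{k=1}^K \Gamma(n-z+\tilde\nu_k)\, a^{-z}\right),
\end{equation*}
so that the integrand in \eqref{Knintegralinverse} reads $y^{-1}e^{F(s;y)-F(t;x)}/(s-t)$. Stirling's formula combined with the rescaling $z=nw$ gives the asymptotic $F(nw;n^{M-K}a) = (K+1)n\log n + n\hat{F}(w;a) + (\text{lower order})$ with
\begin{equation*}
\hat{F}(w;a) = (M+1)w(\log w-1) - (w-1)(\log(w-1)-1) + K(1-w)(\log(1-w)-1) - w\log a.
\end{equation*}
The saddle equation $\hat{F}'(w;x_0)=0$ reduces to $w^{M+1}=x_0(w-1)(1-w)^K$, and a direct computation via the law of sines in the triangle with vertices $0,1,w_+$ shows that this equation admits the complex conjugate pair of solutions
\begin{equation*}
w_\pm = \frac{\sin\bigl(\tfrac{M+1}{K+1}\varphi + \tfrac{K}{K+1}\pi\bigr)}{\sin\bigl(\tfrac{M-K}{K+1}\varphi + \tfrac{K}{K+1}\pi\bigr)}\, e^{\pm i\varphi},
\end{equation*}
which in particular yields the crucial algebraic identity $w_\pm/(\rho(\varphi)\,x_0) = \pi\cot\varphi \pm i\pi$ with $\rho(\varphi)$ given by \eqref{rhoinverse}.

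Next, in parallel with Section~\ref{sec:ContConst}, I construct a closed descent contour $\tilde\Sigma$ enclosing $[0,1]$ and passing through $w_\pm$, together with the vertical line $\tilde{\mathcal{C}}_{x_0}=\{\Re z=\Re w_\pm\}$. Up to the small circular deformation $\tilde\Sigma\mapsto\tilde\Sigma^\epsilon$ near the origin described in Section~\ref{subsec:contours_bulk}, these furnish the contours $\Sigma = n\tilde\Sigma^r$ and $\mathcal{C}=n\tilde{\mathcal{C}}_{x_0}$ entering \eqref{eq:deformed_contour}. The analog of Lemma~\ref{lem:bulk_ineq} -- namely that $\Re F$ attains a strict minimum on the curved part of $\Sigma$ at $nw_\pm$ and a strict maximum on $\mathcal{C}$ at $nw_\pm$ -- is established by computing $\tfrac{d}{d\phi}\Re\hat{F}$ along the angular parametrization of $\tilde\Sigma$ and $\tfrac{d^2}{dy^2}\Re\hat{F}(\Re w_\pm + iy;x_0)$ along the vertical line, exactly as in the proofs of Lemmas~\ref{prop:bulk_contours_properties} and~\ref{lem:for_C}. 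Near $z=0$ and near $z=n$, where Stirling's formula is inaccurate, the reflection formula $\Gamma(z)\Gamma(1-z)=\pi/\sin(\pi z)$ is invoked as in Section~\ref{subsec:proof_lem_bulk_ineq} to recover effective approximations of $\Re F$ on these short pieces of the contour.

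With these ingredients the proof closes exactly as in Section~\ref{subsec:proof_of_bulk_univ}: write $K_n(x,y)=I_1+I_2$, where $I_2$ is the integral over the vertical bars of $\Sigma$ and $I_1$ that over the curved part. Cauchy's theorem applied to $I_2$ yields, in the limit $\epsilon\to 0$,
\begin{equation*}
\lim_{\epsilon\to 0}I_2 = \frac{1}{2\pi i\, y\log(x/y)}\bigl((x/y)^{nw_+} - (x/y)^{nw_-}\bigr),
\end{equation*}
and inserting the scaling of \eqref{eq:bulk_inverse} together with the identity $w_\pm/(\rho(\varphi)x_0)=\pi\cot\varphi\pm i\pi$ reproduces $K_{\sin}(\xi,\eta)$ with the prefactor stated in \eqref{eq:bulk_inverse}. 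The residual $I_1$ is evaluated by the standard saddle point method at the two saddle points $nw_\pm$, giving $I_1 = \bigO(n^{1/2 - (M-K)})$ after the $y^{-1}=\bigO(n^{-(M-K)})$ factor, which is subdominant relative to the $\bigO(n^{1-(M-K)})$ size of $I_2$.

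The principal obstacle is verifying the global descent geometry underlying the contour construction. Because the limiting spectrum is now supported on all of $(0,+\infty)$, the analog of $\tilde\Sigma_\pm$ in \eqref{eq:parametrisation_of_zeta} no longer closes up at a finite right endpoint of the spectrum, and the added term $K(1-w)(\log(1-w)-1)$ in $\hat{F}$ introduces an additional logarithmic branch point at $w=1$. One must show that the level set $\{\Im\hat{F}(z;x_0)=\Im\hat{F}(w_+;x_0)\}$ through $w_+$ still forms a closed loop in the upper half plane enclosing $[0,1]$ while avoiding the additional pole locus $\{n+\tilde\nu_k,\,n+\tilde\nu_k+1,\ldots\}$ of the integrand arising from $\Gamma(n-s+\tilde\nu_k)$. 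Monotonicity of $\Re\hat{F}$ along this contour can then be read off from $\hat{F}'(w;x_0)=(M+1)\log w - \log(w-1) - K\log(1-w) - \log x_0$ by substituting the explicit angular parametrization of $w$, which reduces the sign analysis to checking strict monotonicity in $\varphi$ of the function on the right-hand side of \eqref{eq:parainverse} -- an elementary trigonometric computation in the spirit of the proof of Lemma~\ref{prop:bulk_contours_properties}.
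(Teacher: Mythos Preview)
Your proposal is correct and follows the same route as the paper's sketched proof: same phase function $\hat{F}$, same saddle pair $w_\pm$, same vertical $\mathcal{C}$ and curved $\tilde\Sigma$, and the same $I_1+I_2$ split with Cauchy's theorem producing the sine kernel from $I_2$.

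The obstacle you raise in the last paragraph dissolves once you write down the explicit parametrization the paper uses, namely
\[
\tilde\Sigma=\Bigl\{\zeta(\phi)=\tfrac{\sin\bigl(\frac{M+1}{K+1}\phi+\frac{K}{K+1}\pi\bigr)}{\sin\bigl(\frac{M-K}{K+1}\phi+\frac{K}{K+1}\pi\bigr)}\,e^{i\phi}\ \Big|\ -\tfrac{\pi}{M+1}\le\phi\le\tfrac{\pi}{M+1}\Bigr\},
\]
which is simply the locus of the saddle points $w_+(\phi)$ as $\phi$ sweeps the bulk range. This curve closes up from $z=0$ (at $\phi=\pm\pi/(M+1)$) to $z=1$ (at $\phi=0$), not at a soft-edge point---the unboundedness of the spectrum is irrelevant here. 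Since $|\zeta(\phi)|\le 1$ throughout, one has $\Re w_\pm<1$, so the vertical line $\mathcal{C}=\{\Re z=n\Re w_\pm\}$ automatically stays to the left of the new poles of $\Gamma(n-s+\tilde\nu_k)$ at $s=n+\tilde\nu_k,n+\tilde\nu_k+1,\dots$. There is no need to analyze level sets of $\Im\hat{F}$; with this explicit $\zeta(\phi)$ the monotonicity argument of Lemma~\ref{prop:bulk_contours_properties} goes through verbatim.
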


We now give a sketched proof of the above theorem with emphasis on the key steps.

\begin{proof}[Sketched proof of Theorem \ref{thm:bulk_inverse}]
We scale the values of $x$ and $y$ in \eqref{Knintegralinverse} such that
\begin{equation}
x=n^{M-K}\left( x_0+ \frac{  \xi}{n\rho(\varphi) }\right), \qquad y=n^{M-K}\left( x_0+ \frac{  \eta}{n\rho(\varphi) }\right),
\end{equation}
where $\xi, \eta\in\mathbb{R}$ and $\rho(\varphi)$ is given in \eqref{rhoinverse}. By Stirling's formula \eqref{eq:stirling} and the reflection formula, it follows that, for $n$ large,
\begin{align}
  K_n &\left(x,y \right) \sim  -\frac{n^{-M+K}}{(2\pi i)^2}\int_{\mathcal{C}} \ud s \oint_{\Sigma}  \ud t \frac{e^{n(\hat{F}(ns; x_0) - \hat{F}(nt; x_0))}}{s-t} \, \left(1+\frac{\xi}{n x_0\rho }\right)^{t} \left(1+\frac{\eta}{nx_0\rho }\right)^{-s} \nonumber\\
  &\left(x_0+\frac{\eta}{\rho n}\right)^{-1}\, \exp\left\{\sum_{j=0}^M\left(\nu_j+\frac{1}{2}\right)\log\frac{s}{t}+\sum_{k=1}^K\left(\tilde{\nu}_k-\frac{1}{2}\right)\log\frac{n - s}{n - t}-\frac{1}{2}\log\frac{s - n}{t - n} \right\},
\end{align}
where the shapes of the contours $\mathcal{C}$ and $\Sigma$ are to be described later. Here,
\begin{equation}\label{Ffunctioninverse}
  \hat{F}(z; x_0)=(M+1)(z\log z-1)+K(1-z)(\log(1- z)-1)-(z-1)(\log(z-1)-1)-z\log x_0.
\end{equation}
Since
  \begin{equation}
   \hat{F}_z(z; x_0) = (M+1)\log z-K\log(1-z)-\log(z-1)-\log x_0,
   \end{equation}
the saddle point of $\hat{F}(z; x_0)$ satisfies the following algebraic equation
\begin{equation} z^{M+1}+x_0 (1-z)^{K+1}=0.\label{aqinverse}
\end{equation}
Particularly, with the help of parametrization \eqref{eq:parainverse}, two solutions of \eqref{aqinverse} can be given explicitly by
\begin{equation}
  w_{\pm}=\frac{\sin \left( \frac{M+1}{K+1} \varphi+\frac{K  }{K+1}\pi \right) } {  \sin \left( \frac{M-K}{K+1} \varphi+\frac{K }{K+1}\pi \right) }\,  e^{\pm i \varphi};\label{twosinverse}
 \end{equation}
see \cite{Forrester-Liu14} for more details. These two complex conjugate numbers play the same role of $w_{\pm}$ used in Section \ref{subsec:proof_of_bulk_univ}.

Similar to the contours used in Section \ref{subsec:proof_of_bulk_univ}, the contour $\mathcal{C}$ is chosen to be the straight line
\begin{equation}
\mathcal{C}=\left\{z~\Big{|}~\Re \frac{z}{n} =\Re w_{\pm} = \frac{\sin \left( \frac{M+1}{K+1} \varphi+\frac{K  }{K+1}\pi \right)  }{  \sin \left( \frac{M-K}{K+1} \varphi+\frac{K  }{K+1}\pi \right) }\,  \cos\varphi\right\},
\end{equation}
while $\Sigma$ is chosen to be a deformation based on the contour
\begin{equation}
  \tilde{\Sigma}=\left\{z=\frac{\sin \left( \frac{M+1}{K+1} \phi+\frac{K  }{K+1}\pi \right)  }{  \sin \left( \frac{M-K}{K+1} \phi+\frac{K  }{K+1}\pi \right) }\,   e^{i\phi}~\Big{|}~-\frac{\pi}{M+1}\leq \phi\leq \frac{\pi}{M+1}\right\},
\end{equation}
in the same way as the $\Sigma$ in Section \ref{subsec:proof_of_bulk_univ}.

One can then show that, in a manner similar to Lemma \ref{lem:bulk_ineq}, $\Re \hat{F}(z; x_0)$ defined in \eqref{Ffunctioninverse} attains its global maximum at  $z=w_{\pm}$ for $nz \in \mathcal{C}$ and its global minimum at $z = w_{\pm}$ for $z \in \tilde \Sigma$. This in turn implies that the main contribution of the integral in \eqref{Knintegralinverse} comes from the counterpart of the integral $I_2$ in \eqref{eq:formula_I_2}, that is,
\begin{align} \label{eq:Kn_icorr_inverse}
  K_n \left(x,y \right) &\sim   \frac{n^{-M+K}}{ 2\pi i x_0}   \int_{nw_{-}}^{nw_+} \ud s  \, \left(1+\frac{\xi}{x_0\rho n}\right)^s \left(1+\frac{\eta}{x_0\rho n}\right)^{-s}\nonumber \\
  &\sim n^{-M+K+1}\, \rho\, e^{\pi (\xi-\eta)\cot\varphi}\, \frac{\sin\pi (\xi-\eta)}{\pi(\xi-\eta)}.
\end{align}
 \end{proof}

\subsection{Product of Ginibre matrices with one truncated unitary matrix}
This model refers to the product
\begin{equation} \label{Ym1}
Y  = X_M   \cdots X_2 V,
\end{equation}
where $X_i$, $i=2,\ldots,M$ is a Ginibre matrix of size $(n+\nu_i)\times (n+\nu_{i-1})$ with $\nu_i\geq 0$. The $(n+\nu_1)\times n$ matrix $V$ is the left upper block of an $l\times l$ Haar distributed unitary matrix $U$ with $l\geq 2n+\nu_1$. It is known that the squared singular values of $V$ are distributed on $(0,1)$ according to a Jacobi unitary ensemble; cf. \cite{Jiang09}.

By \cite[Proposition 4.4]{Kuijlaars-Stivigny14}, we have that the squared singular values of $Y$ form a determinantal process with the correlation kernel
\begin{multline} \label{Knintegralone}
    K_n(x,y) =  \frac{1}{(2\pi i)^2} \int_{-1/2-i\infty}^{-1/2+i\infty} \ud s \oint_{\Sigma}  \ud t\\
        \prod_{j=0}^M   \frac{\Gamma(s+\nu_j+1)}{\Gamma(t+\nu_j+ 1)} \, \frac{\Gamma(t-n+1)}{\Gamma(s-n+1)}\frac{\Gamma(t+n+\kappa)}{\Gamma(s+n+\kappa)}
        \frac{x^t y^{-s-1}}{s-t},
\end{multline}
where
\begin{equation}
  \nu_0=0, \qquad \kappa:=l+1-2n>\nu_1,
\end{equation}
$\Sigma$ is a closed contour going around $0, 1, \ldots, n-1$ in the positive direction and  $\Re t > -1/2$ for $t \in \Sigma$. To state the universal results for the correlation kernel, we need the following parametrization
\begin{equation}\label{eq:paraunitary}
x_0=\frac{\left(\sin \left( \frac{M+1}{2} \varphi \right)\right)^{\frac{M+1}{2}}}{\sin\varphi \,\left(\sin\left( \frac{M-1}{2}\varphi\right)\right)^{\frac{M-1}{2}}}, \quad 0< \varphi<\frac{2\pi}{M+1},
\end{equation}
which is a one-to-one mapping from  $(0,2\pi/(M+1))$ to $\left(0,\left(M + 1\right)^{\frac{M + 1}{2}}/\left(2 (M - 1)^{\frac{M -1}{2}}\right) \right)$; see \cite{Forrester-Liu14}.

\begin{thm}[Bulk universality]\label{thm:bulk_truncated}
Let $K_n(x,y)$ be the correlation kernel defined in \eqref{Knintegralone}. For $x_0\in\left(0, \left(M + 1\right)^{\frac{M + 1}{2}}/ \left(2\left(M - 1\right)^{ \frac{M -1}{2}} \right) \right)$, which is parametrized by $\varphi \in (0, 2\pi/(M+1))$ through \eqref{eq:paraunitary}, we have, with $M \geq 2$ and $\nu_1,\ldots,\nu_M,\kappa$ being fixed,
\begin{align}\label{eq:bulk_unitary}
 \lim_{n \to \infty} \frac{e^{-\pi \xi\cot\frac{\varphi}{2}  }}{e^{-\pi \eta\cot\frac{\varphi}{2}  }} \frac{n^{M-2}}{\rho(\varphi)} K_n\left(n^{M-1}\left( x_0+ \frac{  \xi}{n\rho(\varphi) }\right), n^{M-1}\left( x_0+ \frac{  \eta}{n\rho(\varphi)}\right)\right)  =K_{\sin}(\xi,\eta)
\end{align}
uniformly for $\xi$ and $\eta$ in any compact subset of $\mathbb{R}$, where the function $\rho$ is given by
\begin{equation} \label{rhounitary}
\rho (\varphi) = \frac{1}{\pi x_0}\left(\frac{ \sin \left( \frac{M+1}{2} \varphi \right) }{ \sin \left( \frac{M-1}{2}\varphi \right)  }\right)^{1/2}\sin\frac{\varphi}{2}.
\end{equation}
\end{thm}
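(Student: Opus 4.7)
The plan is to repeat, essentially verbatim, the argument sketched for Theorem \ref{thm:bulk_inverse}, with the role of the inverse-Ginibre factor $\prod_k\Gamma(n-s+\tilde\nu_k)/\Gamma(n-t+\tilde\nu_k)$ taken over by the truncated-unitary factor $\Gamma(t+n+\kappa)/\Gamma(s+n+\kappa)$. I would first absorb this factor into a modified exponent
\[
F(z;a) := \log\Bigl(\frac{\prod_{j=0}^{M}\Gamma(z+\nu_j+1)}{\Gamma(z-n+1)\,\Gamma(z+n+\kappa)}\,a^{-z}\Bigr),
\]
so that the integrand of \eqref{Knintegralone} reads $y^{-1}(s-t)^{-1}e^{F(s;y)-F(t;x)}$. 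Stirling's formula, supplemented by the reflection-formula device of Section \ref{subsec:proof_lem_bulk_ineq} when the contour crosses the strip $0<\Re z<n$, yields $F(nz;n^{M-1}a)=n\hat F(z;a)+(\text{lower order})$ with
\[
\hat F(\zeta;a)=(M+1)\zeta(\log\zeta-1)-(\zeta-1)(\log(\zeta-1)-1)-(\zeta+1)(\log(\zeta+1)-1)-\zeta\log a.
\]
The saddle equation $\hat F_\zeta=0$ is $\zeta^{M+1}=a(\zeta^2-1)$, and matching real and imaginary parts of $\zeta=re^{i\varphi/2}$ (with elementary $\sin(A-B)$ identities) confirms that, for $a=x_0$ given by \eqref{eq:paraunitary}, its complex conjugate solutions are
\[
w_\pm=\sqrt{\frac{\sin((M+1)\varphi/2)}{\sin((M-1)\varphi/2)}}\,e^{\pm i\varphi/2}.
\]
A short computation then yields $\Im w_+=\pi x_0\rho(\varphi)$ and $\Re w_+/\Im w_+=\cot(\varphi/2)$, and these two identities are what ultimately generate the prefactor $e^{-\pi\xi\cot(\varphi/2)}/e^{-\pi\eta\cot(\varphi/2)}$ appearing in \eqref{eq:bulk_unitary}.

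I would then take $\mathcal{C}$ to be the vertical line $\Re z=n\Re w_\pm$ and $\Sigma$ to be a closed deformation of
\[
\tilde\Sigma=\Bigl\{\sqrt{\tfrac{\sin((M+1)\phi/2)}{\sin((M-1)\phi/2)}}\,e^{\pm i\phi/2}~\Big|~\phi\in\bigl(0,\tfrac{2\pi}{M+1}\bigr)\Bigr\},
\]
closed by the same small-arc deformation near the origin used to produce $\tilde\Sigma^\epsilon$ and split into ``curved'' and ``vertical'' pieces exactly as in \eqref{def:sigm_curved_bulk}--\eqref{eq:Sigma_contour_bulk_2}. The analytical heart is the analog of Lemma \ref{lem:bulk_ineq}, asserting that $w_\pm$ are strict quadratic minima of $\Re\hat F(\,\cdot\,;x_0)$ along $\tilde\Sigma$ and strict local maxima along $\mathcal{C}$. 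Following the pattern of Section \ref{sec:ContConst}, the first statement reduces to strict monotonicity of the right-hand side of \eqref{eq:paraunitary} as a function of $\phi$ (essentially known from \cite{Forrester-Liu14}), and the second to a concavity analysis of $\partial_y^2\Re\hat F$ along $\mathcal{C}$ based on $\hat F''(\zeta)=(M+1)/\zeta-1/(\zeta-1)-1/(\zeta+1)$ together with the Cauchy--Riemann identity \eqref{primerelation}.

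Granting these inequalities, I would split $K_n(x,y)=I_1+I_2$ as in Section \ref{subsec:proof_of_bulk_univ}. A standard saddle-point estimate gives $I_1=\bigO(n^{-M+3/2})$, negligible against the target scale $n^{-M+2}$; here the extra factor $\Gamma(z+n+\kappa)$ stays in the uniform Stirling regime on both contours and contributes only a smooth correction to the local quadratic form. For $I_2$, the factors $\Gamma(\,\cdot\,+n+\kappa)$ cancel identically in the ratio $e^{F(s;y)}/e^{F(s;x)}$, so Cauchy's theorem collapses the integral to
\[
\lim_{\epsilon\to 0}I_2=\frac{1}{2\pi i\,y\log(x/y)}\Bigl(\bigl(\tfrac{x}{y}\bigr)^{nw_+}-\bigl(\tfrac{x}{y}\bigr)^{nw_-}\Bigr),
\]
exactly as in \eqref{eq:formula_I_2}. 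Substituting the given scalings of $x$ and $y$, expanding $\log(x/y)=(\xi-\eta)/(nx_0\rho(\varphi))+\bigO(n^{-2})$, and using $w_\pm/(x_0\rho(\varphi))=\pi\cot(\varphi/2)\pm i\pi$, the two exponentials combine to $2i\sin(\pi(\xi-\eta))\,e^{\pi\cot(\varphi/2)(\xi-\eta)}$, producing precisely the right-hand side of \eqref{eq:bulk_unitary} after the prefactor rearrangement.

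The main obstacle is the analog of Lemma \ref{prop:bulk_contours_properties}: the fractional exponents in \eqref{eq:paraunitary} make the monotonicity of the associated $v(\phi)$ less transparent than in its Ginibre counterpart \eqref{def:vphi}, and one must check carefully that $\Re(d\zeta/d\phi)<0$ along $\tilde\Sigma$. A secondary technical nuisance is that $\Re w_\pm$ crosses the value $1$ as $\varphi$ varies, so the concavity analysis of $\Re\hat F$ along $\mathcal{C}$ has to be split into two subcases (with $\mathcal{C}$ lying respectively in $\{0<\Re z<n\}$ or $\{\Re z>n\}$ after scaling), in direct analogy with the treatment in the proof of Lemma \ref{lem:for_C}. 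All remaining ingredients—the reflection-formula estimate on the small arc near $\zeta=0$, the global decay on $\mathcal{C}\cap\{|z|>\delta^{-1}n\}$, and the uniform Stirling approximations away from $0$ and $n$—transfer with only notational changes from Sections \ref{sec:proofs} and \ref{sec:ContConst}.
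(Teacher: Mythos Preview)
Your proposal is correct and follows essentially the same approach as the paper's sketched proof: the same phase function $\hat F$, the same saddle points $w_\pm$, the same choice of $\mathcal{C}$ and $\tilde\Sigma$, and the same $I_1+I_2$ decomposition leading to the residue computation that produces the sine kernel. You in fact supply more detail than the paper (the explicit form of $\hat F''$, the identities $\Im w_+=\pi x_0\rho(\varphi)$ and $\Re w_+/\Im w_+=\cot(\varphi/2)$, and the observation that $\Gamma(z+n+\kappa)$ stays in the Stirling regime throughout), and you correctly flag the two places where the verification of the analog of Lemmas~\ref{prop:bulk_contours_properties}--\ref{lem:for_C} requires care.
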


When the reference point $x_0$ is taken to be the right ending point, we have
\begin{thm}[Soft edge universality]\label{thmunitary:edge}
  With the correlation kernel $K_n(x,y)$ defined in \eqref{Knintegralone}, we have, with $\nu_1,\ldots,\nu_M,\kappa$ being fixed,
  \begin{multline}\label{eq:soft_univ_unitary}
    \lim_{n \to \infty}
    \exp\left\{ \left(\frac{n}{2}\right)^{\frac{1}{3}}\frac{(M+1)^{\frac{1}{2}}}{(M-1)^{\frac{1}{6}}}(\eta- \xi)     \right\}  \,
    n^{M-\frac{5}{3}} c_2 K_n\left(n^{M-1}\left( x_* + \frac{c_2 \xi}{n^{\frac{2}{3}}}\right), n^{M-1}\left( x_* + \frac{c_2 \eta}{n^{\frac{2}{3}}}\right)\right) \\
    = K_{\Ai}(\xi, \eta)
  \end{multline}
  uniformly for $\xi$ and $\eta$ in any compact subset of $\mathbb{R}$, where
  \begin{equation}  \label{eq:varrho}
    x_* = \frac{\left(M + 1\right)^{\frac{M + 1}{2}}}{2\left(M - 1\right)^{ \frac{M -1}{2}}}, \quad \text{and} \quad
    c_2 = \frac{\left(M+1\right)^{\frac{M+1}{2}}}{2^{\frac{4}{3}}(M-1)^{\frac{M}{2}-\frac{7}{6}}}.
\end{equation}
\end{thm}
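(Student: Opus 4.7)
I follow the steepest-descent plan of Section~\ref{subsec:proof_edge}, adapted to the kernel \eqref{Knintegralone}. Incorporating the extra $\Gamma(t+n+\kappa)/\Gamma(s+n+\kappa)$ factor into $F$, set
\[
 F(z; a) := \log\!\left( \frac{\prod_{j=0}^M \Gamma(z+\nu_j+1)}{\Gamma(z - n + 1)\,\Gamma(z+n+\kappa)}\, a^{-z} \right),
\]
so that $K_n(x,y) = \frac{y^{-1}}{(2\pi i)^2}\int_\mathcal{C} \ud s\oint_\Sigma \ud t\, e^{F(s;y) - F(t;x)}/(s-t)$. Since $\kappa$ is fixed, Stirling's formula \eqref{eq:stirling} applies uniformly to $\Gamma(z+n+\kappa)$ on the contours chosen below, giving $F(n\zeta; n^{M-1}a_0) = n\hat F(\zeta; a_0) + O(\log n)$ with
\[
 \hat F(\zeta;a_0) = (M+1)\zeta(\log\zeta - 1) - (\zeta-1)(\log(\zeta-1)-1) - (\zeta+1)(\log(\zeta+1)-1) - \zeta\log a_0;
\]
the $n\log n$ contribution cancels because $(M+1)-1-1-(M-1)=0$, consistent with the $n^{M-1}$ (rather than $n^M$) scaling of $x$ in \eqref{eq:soft_univ_unitary}. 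Solving $\hat F_\zeta = \hat F_{\zeta\zeta} = 0$ gives the soft-edge saddle $z_* = \sqrt{(M+1)/(M-1)}$ with $a_0 = z_*^{M+1}/(z_*^2 - 1) = x_*$ as in \eqref{eq:varrho}, and a direct computation yields $\hat F_{\zeta\zeta\zeta}(z_*;x_*) = -(M+1)/z_*^2 + 1/(z_*-1)^2 + 1/(z_*+1)^2 = (M-1)^2 \neq 0$, so the cubic Airy structure is present.

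\textbf{Local rescaling and contour construction.} Taking $c_1 := (2/(M-1)^2)^{1/3}$ and $c_2 := x_*/c_1$ (one checks this coincides with the $c_2$ in \eqref{eq:varrho}), I rescale $s = nz_* + c_1 n^{2/3} u$ and $t = nz_* + c_1 n^{2/3} v$. The Taylor expansion of $\hat F$ around $z_*$ contributes $u^3/3$ and $v^3/3$, while the prefactors from $y^{-s}$ and $x^{-t}$ produce the linear terms $-u\eta$ and $-v\xi$ (using $c_1 c_2/x_* = 1$) together with an $n^{1/3}$-scale exponential factor that is precisely inverted by the prefactor on the left-hand side of \eqref{eq:soft_univ_unitary}. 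The resulting local double integral is identified with the Airy kernel representation \eqref{def:airy kernel}, while $y^{-1}\sim n^{1-M}/x_*$ and the Jacobian $c_1 n^{2/3}$ of the rescaling produce the normalizing factor $n^{M-5/3}c_2$. The contours $\mathcal{C}$ and $\Sigma$ are built in direct analogy with Section~\ref{subsec:proof_edge}: $\mathcal{C}$ consists of rays at angles $\pm\pi/3$ from $nz_*$ joined by a vertical segment together with upper and lower vertical tails, while $\Sigma$ combines a local piece through $nz_*$ at angles $\pm 2\pi/3$ with a global curved--vertical loop encircling $\{0,1,\dots,n-1\}$, whose curved part is an analogue of $\tilde\Sigma$ modeled on a parametric arc compatible with \eqref{eq:paraunitary}.

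\textbf{Main obstacle.} The principal work is the analogue of Lemma~\ref{lem:edge_ineq}: quantitative inequalities stating that $\Re F$ is globally maximized over $\mathcal{C}$ and globally minimized over $\Sigma$ near $nz_*$. This reduces to a counterpart of Lemma~\ref{lem:for_C} for the modified $\hat F$. Along the vertical line $\Re\zeta = z_*$ one needs $\Re\hat F(z_* + iy; x_*)$ to be concave and strictly decreasing in $|y|$, which via $\partial_y^2\Re\hat F = -\Re\hat F_{\zeta\zeta}$ reduces to positivity of an explicit rational expression analogous to \eqref{2prime} but involving the extra singular point $\zeta = -1$; since $z_* > 1$, this vertical line stays uniformly away from both $0$ and $-1$ and the algebraic check carries through. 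Along the curved piece of $\tilde\Sigma$, the counterpart of \eqref{eq:hat_F_along_Sigma_Airy} is established using the parametrization \eqref{eq:paraunitary}, whose associated saddle equation $\zeta^{M+1} = a_0(\zeta^2-1)$ provides the monotonicity structure in $\phi$ exactly as in Lemma~\ref{prop:bulk_contours_properties}. The portions of $\Sigma$ close to the origin and the far tails of $\mathcal{C}$, where Stirling may fail, are handled by the reflection formula \eqref{eq:reflection_gamma} exactly as in Sections~\ref{subsec:proof_lem_bulk_ineq}--\ref{subsec:proof_lem_edge_ineq}; the additional $\Gamma(z+n+\kappa)$ factor contributes only a controlled, subleading correction because its argument retains positive real part of order $n$ on all the contours considered. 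Once these inequalities are in place, the global estimates parallel to \eqref{eq:peripherial_Airy_est_1}--\eqref{eq:contral_of_outer_integral} conclude the proof.
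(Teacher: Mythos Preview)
Your proposal is correct and follows essentially the same approach as the paper's own sketched proof: the same phase function $\hat F$, the same coalescing saddle $z_*=\sqrt{(M+1)/(M-1)}$ with $\hat F_{\zeta\zeta\zeta}(z_*;x_*)=(M-1)^2$, the same rescaling constant $c_1=((M-1)^2/2)^{-1/3}$, and the same contour architecture as in Section~\ref{subsec:proof_edge}. In fact your outline is more detailed than the paper's sketch, which does not spell out the analogue of Lemma~\ref{lem:edge_ineq} or the treatment of the extra factor $\Gamma(z+n+\kappa)$; your observations that this factor is harmless (its argument has positive real part of order $n$ throughout) and that the concavity argument along $\Re\zeta=z_*$ goes through because $z_*>1$ stays away from the new pole at $-1$ are exactly the additional checks needed.
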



\begin{proof}[Sketched proofs of Theorems \ref{thm:bulk_truncated} and \ref{thmunitary:edge}]
We scale the values of $x$ and $y$ in \eqref{Knintegralone} such that
\begin{equation}
  x=n^{M-1}\left( x_0+ \frac{  \xi}{\rho(\varphi) n}\right), \qquad y=n^{M-1}\left( x_0+ \frac{  \eta}{\rho(\varphi) n}\right)
\end{equation}
in the bulk case or
\begin{equation}
  x=n^{M-1}\left( x_* + \frac{c_2 \xi}{n^{\frac{2}{3}}} \right), \qquad y=n^{M-1}\left( x_* + \frac{c_2 \eta}{n^{\frac{2}{3}}} \right)
\end{equation}
in the soft edge case, where $\xi, \eta\in\mathbb{R}$. By using Stirling's formula for gamma functions and the reflection formula, in the bulk case it follows that, for $n$ large
\begin{align}
 K_n &\big(x,y\big) \sim \frac{n^{-M+1}}{(2\pi i)^2}\int_{\mathcal{C}} \ud s \oint_{\Sigma}  \ud t \frac{e^{n(\hat{F}(ns; x_0)-\hat{F}(nt; x_0))}}{s-t} \, \left(1+\frac{\xi}{n x_0 \rho}\right)^t \left(1+\frac{\eta}{n x_0 \rho }\right)^{-s} \nonumber\\
      &\left(x_0+\frac{\eta}{\rho n}\right)^{-1}\, \exp\left\{\sum_{j=0}^M\left(\nu_j+\frac{1}{2}\right)\log\frac{s}{t}-\left(\kappa-\frac{1}{2}\right) \log\frac{n+s}{n+t}-\frac{1}{2}\log\frac{s-n}{t-n}\right\},
\end{align}
where the contours $\mathcal{C}$ and $\Sigma$ depend on $x_0$ and
\begin{equation}
  \hat{F}(z; x_0)=(M+1)(z\log z-1)-(1+z)(\log(1+ z)-1)-(z-1)(\log(z-1)-1)-z\log x_0.
\label{Ffunctionunitary}
\end{equation}
Since
  \begin{equation}
  \hat{F}_z(z; x_0)=(M+1)\log z- \log(1+z)-\log(z-1)-\log x_0,
  \end{equation}
the saddle point of $F$ satisfies the algebraic  equation
\begin{equation}
z^{M+1}+x_0 (1-z^2)=0.
\label{aqunitary}
\end{equation}
One can find two explicit solutions of this equation with the help of \eqref{eq:paraunitary}, which are given by
\begin{equation} \label{twosunitary}
  w_{\pm}=\left(\frac{ \sin \left( \frac{M+1}{2} \varphi \right) }{ \sin\left( \frac{M-1}{2}\varphi \right)  }\right)^{1/2}e^{\pm i\frac{\varphi}{2}};
\end{equation}
cf. \cite{Forrester-Liu14}. The contours $\mathcal{C}$ and $\Sigma$ are chosen to be
\begin{equation}
  \mathcal{C} =\left\{z~\Big{|}~\Re \frac{z}{n} =\Re z_\pm=\left(\frac{ \sin \left( \frac{M+1}{2} \varphi \right) }{ \sin \left( \frac{M-1}{2}\varphi \right)  }\right)^{1/2}\cos\frac{\varphi}{2}\right\},\end{equation}
and the deformation of
\begin{equation}
\tilde{\Sigma}=\left\{
z=\left(\frac{ \sin ( \frac{M+1}{2} \phi) } { \sin( \frac{M-1}{2}\phi)  }\right)^{1/2} \,  e^{i\frac{\phi}{2}}
~\Big{|}~ -\frac{2\pi}{M+1} \leq \phi\leq \frac{2\pi}{M+1}
\right\},
\end{equation}
in manners similar to the construction of $\Sigma$ based on $\tilde{\Sigma}$ that we described in Sections  \ref{subsec:contours_bulk} and \ref{subsec:proof_of_bulk_univ}.

We then have $\Re \hat{F}(z; x_0)$ defined in \eqref{Ffunctionunitary} attains its global maximum at  $z=w_{\pm}$ for $nz\in \mathcal{C}$ and its global minimum at $z = w_{\pm}$ for $z \in \tilde \Sigma$. Thus, if $x_0\in\left(0, \left(M + 1\right)^{\frac{M + 1}{2}}/ \left( 2\left(M - 1\right)^{ \frac{M -1}{2}} \right) \right)$, like \eqref{eq:Kn_icorr_inverse},
 \begin{align}
  K_n \left(x,y \right) &\sim   \frac{n^{-M+1}}{ 2\pi i x_0}   \int_{nw_{-}}^{nw_+} \ud s  \, \left(1+\frac{\xi}{x_0\rho n}\right)^s \left(1+\frac{\eta}{x_0\rho n}\right)^{-s}\nonumber \\
  &\sim n^{-M+2}\, \rho\, e^{\pi (\xi-\eta)\cot\frac{\varphi}{2}}\, \frac{\sin\pi (\xi-\eta)}{\pi(\xi-\eta)},
 \end{align}
which is \eqref{eq:bulk_unitary}.

As $x_0 \to x_*$, we have  $z_{+}=z_-:=z_0=\sqrt{(M+1)/(M-1)}$. In this case, the integration over the contours around $z_0$ contributes the most.
Note that
\begin{equation}
  \hat{F}(z; x_*) = \hat{F}(z_0; x_*)+\frac{(M-1)^2}{6}(z-z_0)^3+\cdots, \qquad z\to z_0.
\end{equation}
With the formula
\begin{align}
 K_n &\big(x,y\big) \sim \frac{n^{-M+1}}{(2\pi i)^2}\int_{\mathcal{C}} \ud s \oint_{\Sigma}  \ud t \frac{e^{n(\hat{F}(ns; x_0)-\hat{F}(nt; x_0))}}{s-t} \, \left(1+\frac{c_2 \xi}{n^{\frac{2}{3}} x_*}\right)^t \left(1+\frac{c_2 \eta}{n^{\frac{2}{3}} x_*}\right)^{-s} \nonumber\\
      &\left(x_* + \frac{c_2 \eta}{n^{\frac{2}{3}}}\right)^{-1}\, \exp\left\{\sum_{j=0}^M\left(\nu_j+\frac{1}{2}\right)\log\frac{s}{t}-\left(\kappa-\frac{1}{2}\right) \log\frac{n+s}{n+t}-\frac{1}{2}\log\frac{s-n}{t-n}\right\},
\end{align}
by the change of variables
\begin{equation}
  s = nz_0 + n^{\frac{2}{3}} ((M-1)^{2}/2)^{-1/3}u, \qquad t = nz_0 + n^{\frac{2}{3}} ((M-1)^{2}/2)^{-1/3}v,
\end{equation}
and by the deformation of $\mathcal{C}$ and $\Sigma$ such that they go through the vicinity of $nz_0$ in proper directions, we have
\begin{align}
  K_n \left(x,y \right)   &\sim n^{-M+\frac{5}{3}}\, c^{-1}_2 \, \left(1+\frac{c_2 \xi}{x_* n^{\frac{2}{3}}} \right)^{nz_0} \left(1+\frac{c_2 \eta}{x_* n^{\frac{2}{3}}}\right)^{-nz_0} \,K_{\Ai}(\xi, \eta).
\end{align}
Thus \eqref{eq:soft_univ_unitary} is proved.
\end{proof}
%
%


\begin{rmk}
  By setting  $\xi=\eta=0$ in \eqref{eq:bulk_inverse} and \eqref{eq:bulk_unitary}, the bulk limit also implies point-wise convergence of one-point correlation functions in the support of the limiting measure. The functions $\rho(\varphi)$'s in \eqref{eq:bulk_inverse} and \eqref{eq:bulk_unitary} are actually density functions of the limiting spectral distribution for the squared singular values under proper parametrizations. Similar result holds for products of Ginibre matrices; see Theorem \ref{thm:bulk}. Thus we recover the limiting mean density results of the random matrix models discussed above, which were previously derived by the moment method of Stieltjes transforms; see e.g.~\cite{Alexeev-Gotze-Tikhomirov10}, \cite{Forrester14}. However, the moment method has the advantage in the discovery of natural parametrizations like \eqref{eq:para x}, \eqref{eq:parainverse} and \eqref{eq:paraunitary} by combinatorial relations; see \cite{Biane98}, \cite{Forrester-Liu14}, \cite{Haagerup-Moller13}, \cite{Neuschel14}.
\end{rmk}

\section*{Acknowledgment}
We thank the anonymous referees for their careful reading and constructive suggestions. The work of D.-Z. Liu was supported by the National Natural Science Foundation of China (Grants 11301499 and 11171005), and by the Fundamental Research Funds for the Central Universities (Grant WK0010000048). The work of D. Wang was partially supported by the start-up grant R-146-000-164-133. The work of L. Zhang was partially supported by The Program for Professor of Special Appointment (Eastern Scholar) at Shanghai Institutions of Higher Learning (No. SHH1411007) and by Grant SGST 12DZ 2272800, EZH1411513 from Fudan University.



\end{document}